\newcommand{\R}{\mathbb{R}}
\newcommand{\lb}{\left(}
\newcommand{\rb}{\right)}
\newcommand{\cov}{\operatorname{cov}}
\newcommand{\E}{\mathbb{E}}
\newcommand{\N}{\mathbb{N}}
\newcommand{\bfSigmahat}{	\hat{ \mathbf{\Sigma} }}
\newcommand{\bfSigma}{\mathbf{\Sigma}}
\newcommand{\bfX}{\mathbf{X}}
\newcommand{\bfI}{\mathbf{I}}
\newcommand{\cond}{\stackrel{\mathcal{D}}{\to}}
\newcommand{\conp}{\stackrel{\mathbb{P}}{\to}}
\newcommand{\tr}{\operatorname{tr}}
\newcommand{\D}{\mathbf{D}}
\newcommand{\Djq}{\tilde{\mathbf{D}}_{j(q)}}
\newcommand{\rd}{\mathbf{r}}
\newcommand{\T}{\mathbf{\Sigma}}
\newcommand{\Tq}{\tilde{\mathbf{\Sigma}}^{(-q)}}
\newcommand{\Iq}{\tilde{\mathbf{I}}^{(-q)}}
\newcommand{\inv}{^{-1}}
\newcommand{\sq}{^{\frac{1}{2}}}
\newcommand{\im}{\operatorname{Im}}
\newcommand{\PR}{\mathbb{P}}
\newcommand{\bfx}{\mathbf{x}}
\newcommand{\bfA}{\mathbf{A}}
\newcommand{\bfB}{\mathbf{B}}
\newcommand{\su}{\underline{s}}
\newtheorem{theorem}{Theorem}[section]
\newtheorem{lemma}{Lemma}[section]
\newtheorem{proposition}{Proposition}[section]
\newtheorem{corollary}{Corollary}[section]
\newtheorem{remark}{Remark}[section]
\numberwithin{equation}{section}
\begin{document}
\title{{A CLT for the difference of eigenvalue statistics of sample covariance matrices}}
\date{\today}
\author{Nina Dörnemann, Holger Dette}
\maketitle

\begin{abstract}
In the case  where the dimension of the data grows at the same rate as the sample size we prove a central limit theorem for the difference of a linear spectral statistic of the sample covariance and  a linear spectral statistic  of the matrix that is obtained from the sample covariance matrix by deleting a column and the corresponding row. Unlike previous works, we do neither require that the population covariance matrix is diagonal nor that moments of all order exist. Our proof methodology incorporates subtle enhancements to existing strategies, 
which meet the challenges introduced by determining the mean and covariance structure for the difference of two such eigenvalue statistics. Moreover, we also establish the asymptotic independence of the difference-type spectral statistic and the usual linear spectral statistic of sample covariance matrices. 
\end{abstract}

AMS subject classification:
15A18, 60F05

Keywords and phrases: central limit theorem, linear spectral statistic, sample covariance matrix
\section{Introduction} \label{sec_intro}
Let $\bfSigma_n$ be a $p\times p$ Hermitian nonnegative definite matrix and $\bfX_n=(x_{ij})_{1\leq i \leq p, 1 \leq j \leq n}$ a $p\times n$ random matrix with independent centered and standardized entries. The sample covariance matrix is defined by  
\begin{align*} 
\hat\bfSigma_n = \frac{1}{n} \bfSigma_n^{1/2} \bfX_n \bfX_n^\star \bfSigma_n^{1/2},
\end{align*} 
and numerous authors have worked  on the probabilistic properties of the spectrum of $\bfSigmahat_n$ in the  high-dimensional regime, where the dimension $p=p_n$ is increasing with the sample size $n$. In a seminal paper, \cite{Marcenko1967} proved the weak convergence of the spectral distribution of the empirical covariance matrix  to the  Mar{\v{c}}enko-Pastur distribution in the case $p/n\to y \in (0,\infty)$, and \cite{bai1988convergence} showed the convergence to the semicircle law if $p/n\to 0$. These results have been extended by many authors for various models, see \cite{silversteinbai1995} and \cite{silverstein1995strong} as examples for early references. Moreover, \cite{baizhou2008} dropped  the independence structure in the columns of the sample covariance matrix, \cite{bao2012strong}  and \cite{wang2014limiting} considered  separable sample covariance matrices in the case $p/n\to 0$,
and   \cite{liu_et_al_2015} and \cite{wang_et_al_2017} 
discussed the limit of the  spectral distribution 
of sample autocovariance matrices 
for  linear times series.  We also mention the recent work of  \cite{mei2021singular}, who allow for different distributions in the columns of the data matrix.  The extreme eigenvalues of 
$\bfSigmahat_n$ have been investigated by  
\cite{baisilverstein1998}, \cite{johnstone2001distribution}, \cite{soshnikov2002note}, \cite{baik2005phase}, \cite{baik2006eigenvalues}, \cite{Johnstone}, \cite{paul2007asymptotics}, \cite{Baoetal2015} among many others.

A further line of research has its focus on the asymptotic properties of linear spectral statistics of the matrix $\bfSigmahat_n$, which is defined as an integral of an appropriate function with respect to the spectral distribution. In a meanwhile classical paper  in this field, \cite{baisilverstein2004}  proved a CLT for linear eigenvalue statistics of sample covariance matrices for a class of analytic functions under a Gaussian-type $4$th moment condition. By imposing additional structural assumptions on the eigenvectors of $\bfSigma_n$, \cite{panzhou2008} generalized this result, allowing for distributions with a general $4$th moment. Moreover, \cite{najimyao2016} showed that the L\'{e}vy-Prohorov distance between the distribution of a 
linear spectral statistic  with three times differentiable functions and a Gaussian distribution converges to zero, where mean and covariance of this random variable may diverge. Other extensions, among many noteworthy contributions,  include \cite{zheng_et_al_2015} on a substitution principle for the non-centered case, \cite{chen_pan_2015}, \cite{qiu2021asymptotic} on the ultra-high dimensional case $p/n\to\infty$, \cite{dornemann2021linear} on a sequential model, and \cite{li_et_al_2020}, \cite{zhang2022asymptotic} on the asymptotic independence of spiked eigenvalues and linear spectral statistics. 

In this work, we contribute to this discussion from a different perspective  
and  provide a central limit theorem for the difference of eigenvalue statistics of the matrix $\bfSigmahat_n$ and its submatrix $\bfSigmahat^{(-q)}_n$ if 
$\lim_{p,n\to \infty} {p / n} =y >0 $, where 
the matrix $\bfSigmahat^{(-q)}_n$ is obtained from $\bfSigmahat_n$ by deleting  
the $q$th row and $q$th column ($1\leq q \leq p$).
In contrast to the problems discussed in the previous paragraph,  the literature on this topic is much scarce. 
\cite{erdHos2018fluctuations} investigated this problem for a Wigner matrix. 
To our best knowledge, we are only aware of two references considering the sample covariance matrix, which concentrate on the null case ($\bfSigma = \bfI$). 
\cite{erdoes} showed that the difference of two linear spectral statistics satisfies a central limit theorem if the underlying data are i.i.d. governed by a distribution with existing moments of all orders. \cite{dornemann} concentrated on the difference of two logarithmic linear spectral statistics. 
As the  arguments in these references heavily depend on the assumption $\bfSigma_n = \bfI$, they do not provide an immediate pathway to show weak  convergence results in a more general context.
In this paper, we  go beyond the existing literature by dropping the assumption  $\bfSigma_n = \bfI$ and provide a CLT for the difference of two linear spectral statistics of the matrices
$\bfSigmahat_n$ and $\bfSigmahat^{(-q)}_n$. We also  establish the joint convergence of the difference of eigenvalue statistics of $\bfSigmahat_n$ and $\bfSigmahat^{(-q)}_n$ for $q\in\{q_1, q_2\}$.
Moreover, we investigate the joint asymptotic distribution of eigenvalues statistics of $\bfSigmahat_n$ and the difference of such statistics corresponding to $\bfSigmahat_n$ and $\bfSigmahat_n^{(-q)}$. Subsequently, we show that the diagonal entries of the sample precision matrix $\bfSigmahat_n\inv$ and the eigenvalue statistics of $\bfSigmahat_n$ are asymptotically independent.

From a technical point of view,  our results hold for independent random variables $x_{ij}$ with existing moments of  order $5$. Thus, besides the consideration of a general population covariance matrix, we require 
 much weaker assumptions on the data compared to \cite{erdoes} who considered  i.i.d. random variables with  existing moments of all order.
For the  proofs,  we use the common  approach of \cite{bai2004} 
passing to the 
corresponding Stieltjes transforms. However,
it is crucial to note that the 
consideration of the difference of two linear spectral statistics requires subtle refinements 
in the analysis of the process  of the difference of the Stieltjes transforms.
Indeed, there are inherent challenges 
when studying the difference of two linear spectral statistics compared to a single statistic due to an  upscaling effect. 

It is well-known that assumptions based solely on the spectrum of the population covariance matrix are insufficient to guarantee the convergence of the expected value and variance 
of  linear spectral statistics
$\bfSigmahat_n$ unless we have a fourth moment of Gaussian-type \citep[see, for example][]{panzhou2008}. In contrast, when considering differences of linear spectral statistics, we are able to control the bias without any further structural assumptions on $\bfSigma_n$, and thus an assumption that guarantees the convergence of the covariance suffices for our analysis.

\section{Difference of dependent linear spectral statistics}	 \label{sec_gen}

	For the statement of our main result, we require some 
notation.  
	Let  
	\begin{align*}
		F^{\mathbf{A}} = \frac{1}{p} \sum\limits_{j=1}^p \delta_{\lambda_j (\mathbf{A})},
	\end{align*}
	be the empirical spectral distribution of a $p\times p$ Hermitian  matrix $\mathbf{A}$,
where  $\lambda_1  (\mathbf{A}) \geq  \ldots \geq \lambda_p  (\mathbf{A})$ are the ordered eigenvalues of $\mathbf{A}$ 
and $\delta_a $ denotes the Dirac measure at a point $a \in \mathbb{R}$,
and define $\bfA_1 \circ \bfA_2 $ as  the Hadamard product of the  matrices $\bfA_1, \bfA_2 \in\R^p$.
Moreover, for a $p\times p$ matrix $\mathbf{A}$ and some $1\leq q \leq p$, the $(p-1) \times (p-1)$ matrix $\mathbf{A}^{(-q)}$ denotes the submatrix of $\mathbf{A}$ where the $q$th row and $q$th column are deleted.
Finally, if $\mathbf{B}$ is a $(p-1)\times (p-1)$ matrix, then  $\tilde{\mathbf{B}}^{(-q)}$ denotes the $p\times p$ matrix which is generated from $\mathbf{B}$ by inserting an additional column and row at position $q$ filled with zeros. If $\mathbf{B}$ is nonsingular, then we define $$\tilde{\mathbf{B}}^-:= \widetilde{\lb  \mathbf{B}\inv \rb }^{(-q)}$$ as the matrix which is  obtained from $\mathbf{B}^{-1}$ by inserting 
an additional column and row  with zeros at position $q$.

A useful tool in random matrix theory is the Stieltjes transform 
	\begin{align*}
		s_{F}(z) = \int \frac{1}{\lambda - z} dF (\lambda)	
	\end{align*}
of a distribution function $F$ on the real line, which is here  considered  
on the upper complex plane, that is 
for 
 $z\in\mathbb{C}^+ = \{ z \in \mathbb{C} : \im(z) > 0 \}$. If $F= F^{\mathbf{A}}$ is an empirical spectral distribution, then its Stieltjes transform can be represented as 
	\begin{align*}
		s_{F^\mathbf{A}} (z) = \frac{1}{p} \tr \left\{ \lb \mathbf{A} - z \mathbf{I} \rb\inv \right\}, z \in \mathbb{C}^+. 
	\end{align*}
	Standard results on 
the spectral properties of the sample covariance matrix 
 \citep[see, for example the monograph of][]{bai2004} show that
	 under certain conditions, with probability $1$, 
	the empirical spectral distribution $F^{\hat\bfSigma_n}$ 
	converges weakly. The limit, say  
	$F^{y,H}$,    is the so-called generalized Mar\v{c}enko-Pastur distribution  defined by its  Stieltjes transform $s = s_{F^{y,H}}$, which  is the unique solution of the equation
	\begin{align} \label{fund_eq}
		s (z) = \int \frac{1}{\lambda( 1 - y - y z s (z) ) - z } dH(\lambda) 
	\end{align}	 
	on the set $\{ s \in \mathbb{C}^+ : \frac{1-y}{z} + y  s \in \mathbb{C}^+ \}$.
 Here, $H$ denotes the limit of the  spectral distribution
	 $H_n = F^{\bfSigma_n}$   of the population covariance matrix $\bfSigma_n$, which will be assumed to exist 
	 throughout this paper, and $y \in (0,\infty)$ is the limit of the dimension-to-sample-size ratio $y_n=p/n$. For the following discussion,
 define for $\hat\bfSigma_n$ the $(n \times n)$-dimensional companion matrix
	\begin{align} \label{comp}
		\hat{ \mathbf{\underline{\Sigma}}} _n= \frac{1}{n} \mathbf{X}_{n}^\star \bfSigma_n \mathbf{X}_{n} 
	\end{align}	
	and denote the limit (if it  exists) of its spectral distribution 
$F^{\hat{\mathbf{\underline{\Sigma}}}_n}$ 
and its corresponding  Stieltjes transform  by
	\begin{align} \underline{F}^{y,H}~~~\text{ and } ~~ ~~ 
	\underline{s}(z)=s_{\underline{F}^{y,H}} (z),
	\label{def_sut}
	\end{align}
	respectively.  A straightforward  calculation (using \eqref{fund_eq}) shows that this
	 Stieltjes transform satisfies the equation
		\begin{align} \label{repl_a47}
		z & = - \frac{1}{\su(z)} + y \int \frac{\lambda}{1 + \lambda  \su(z) } dH(\lambda).
	\end{align}
    	Note that both $\bfSigma_n$ and $\bfSigma_n^{(-q)}$ share the same limiting spectral distribution $H$ \cite[see, for example, Theorem A.44 in][]{bai2004}. This implies that also the sample versions $\hat\bfSigma_n$ and $\hat\bfSigma_n^{(-q)}$ share the same limiting spectral distribution $F^{y,H}$, characterized by its Stieltjes transform $s$ through the equation \eqref{fund_eq}. 
These observations indicate that an upscaling (compared to the usual linear eigenvalue statistics) is necessary  to obtain 
non-degenerate limit distributions 
for the difference of such two statistics, that is, for integrals of the form $\int f(x) d G_{n,q} (x)$, 
where $f$ is a given function,  
the random (signed) measure $G_{n,q} $ on $\mathbb{R}$
is defined by 
$$
G_{n,q}(x) = p \big ( F^{\hat\bfSigma_n}(x) - F^{y_{n}, H_{n} } (x) \big )
- (p-1) \big ( F^{\hat\bfSigma_n^{(-q)}}(x) - F^{(p-1)/n, H_{nq} } (x) \big ), \quad 1\leq q \leq p,
$$
 and $F_{y_n,H_n}, F_{(p-1)/n, H_{nq}}$ are finite-sample versions of the generalized Mar\v{c}enko-Pastur distribution defined by \eqref{fund_eq}. Here $H_{nq} = F^{\bfSigma_n^{(-q)}}$ denotes the  spectral distribution of the matrix $\bfSigma_n^{(-q)}$. 
In fact, our main result shows that the sequence 
$(X_{n}(f,q_1), X_{n}(f,q_2))_{n\in\N}$, $1 \leq q_1, q_2 \leq p$,
converges weakly with a non-degenerate limit,  where \begin{align} \label{def_X}
		X_{n}(f,q) = \sqrt{n} \int f(x) d G_{n,q} (x), ~ 1 \leq q \leq p .
	\end{align}

For the proof of this and other statements we require several assumptions. In the following let $\kappa =1$ for the complex case and $\kappa = 2$ for the real case and $q_1, q_2 \in \{ 1 , \ldots , p \} $. 
     \begin{enumerate}[label=(A\arabic*)]
        \item \label{A1} For each $n$, the random variables $x_{ij}=x_{ij}^{(n)}$ are independent with  $\E x_{ij} = 0,$ $\E |x_{ij}|^2=1$, $\E x_{ij}^2=\kappa - 1$, $\nu_4 = \E|x_{ij}|^4 < \infty$ does not depend on $i,j$ and   $\max\limits_{i,j,n} \E |x_{ij}|^{5} < \infty$.
        \item \label{A2} $(\bfSigma_n)_{n\in \mathbb{N}}$ is  a sequence of $p\times p$ Hermitian non-negative definite matrices with bounded spectral norm,
	and the sequence of spectral distributions $(F^{\mathbf{\bfSigma}_n})_{n\in \mathbb{N}}$ converges to a  proper c.d.f.  $H$. 
       \item \label{A3} For $z_1, z_2\in \mathbb{C}^+,$ 
    we assume the existence of the limits 
    \begin{align*}
       & g_{\ell_1, \ell_2}(z_1, z_2)   \\ & = \lim\limits_{n\to\infty} \Big\{  
        \lb  \lb \bfI + \su(z_1) \bfSigma_n \rb\inv  \bfSigma_n \rb_{\ell_1\ell_2} 
        - \su(z_1) \lb 
          \lb  \bfI + \underline{s}(z_1) \bfSigma_n \rb \inv 
      \bfSigma_n
          \lb  \tilde\bfI^{(-\ell_2)} + \underline{s}(z_2) \tilde\bfSigma_n^{(-\ell_2)} \rb^{-} 
           \bfSigma_n 
        \rb_{\ell_1\ell_2}
        \Big\}
    \end{align*}
    for $( \ell_1 , \ell_2) = (q_1,q_2), (q_1,q_1)$ and $(q_2,q_2)$.
    \item \label{ass_lindeberg}
    For any fixed $\eta >0$, it holds 
    \begin{align*}
       \lim\limits_{n\to\infty} \frac{1}{np} \sum_{i,j} \E \left[ |x_{ij}|^5 I( |x_{ij} | \geq \sqrt{n} \eta ) \right] = 0. 
    \end{align*}
    \item \label{A5}
   For $z_1, z_2\in \mathbb{C}^+,$ 
    we assume the existence of the limits 
    \begin{align*}
        h_{\ell_1, \ell_2} (z_1, z_2)  = \lim_{n\to\infty} & \tr \Big\{  \bfSigma_n \lb \lb  \bfI + \underline{s}(z_1) \bfSigma_n \rb \inv - \lb  \tilde\bfI^{(-\ell_1)} + \underline{s}(z_1) \tilde\bfSigma_n^{(-\ell_1)} \rb^{-} \rb \\
        & \circ 
        \bfSigma_n \lb \lb  \bfI + \underline{s}(z_2) \bfSigma_n \rb \inv - \lb  \tilde\bfI^{(-\ell_2)} + \underline{s}(z_2) \tilde\bfSigma_n^{(-\ell_2)} \rb^{-} \rb  
        \Big\}, 
    \end{align*}
     for $( \ell_1 , \ell_2) = (q_1,q_2), (q_1,q_1)$ and $(q_2,q_2)$. 
    \end{enumerate}
 
   We are now in the position to formulate our main result.
   
\begin{theorem} \label{thm_lss}
Let $f_1, f_2$ be functions, which are analytic on an open region containing the interval
	\begin{align} \label{interval}
		\Big [ \liminf\limits_{n\to \infty} \lambda_{p}(\T_n) I_{(0,1)} (y)  (1-\sqrt{y})^2 ,  
		\limsup\limits_{n\to\infty} \lambda_{1}(\T_n) ( 1+ \sqrt{y } )^2 \Big ].
	\end{align}
Then, under assumptions \ref{A1}-\ref{A5}, the random vector 
$
    ( X_n(f_1,q_1), X_n(f_2, q_2) ) 
$
converges waekly to a centered normal distribution $(X(f_1, q_1), X(f_2, q_2))$ 
with covariance 
\begin{align}
		 \cov (X(f_1, q_1), X(f_2, q_2)) 
		& =    \frac{\kappa}{4 \pi^2 } \int_{\mathcal{C}_1} \int_{\mathcal{C}_2} f_1(z_1) \overline{f_2(z_2)} 
	 \sigma^2 (z_1,\overline{z_2}, q_1, q_2)
	 \overline{dz_2} dz_1  \nonumber \\ 
	 & +  \frac{\nu_4 - \kappa - 1}{4 \pi^2 } \int_{\mathcal{C}_1} \int_{\mathcal{C}_2} f_1(z_1) \overline{f_2(z_2)} 
	  \tau^2(z_1, \overline{z_2}, q_1, q_2)
	 \overline{dz_2} dz_1 
	 , \label{cov_X}
	\end{align}
	where  $\mathcal{C}, \mathcal{C}_1, \mathcal{C}_2$ are  arbitrary closed,  
	positively orientated contours in the complex plane
	enclosing the interval in \eqref{interval}, $\mathcal{C}_1, \mathcal{C}_2$
	are non overlapping and the  functions  $\sigma^2(z_1,z_2, q_1, q_2)$ and $\tau^2(z_1,z_2, q_1, q_2)$ are defined  in \eqref{def_sigma} and \eqref{def_tau}, respectively. 
\end{theorem}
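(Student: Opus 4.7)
The plan is to follow the general framework of \cite{baisilverstein2004}: recast the theorem as a statement about the Stieltjes transform of the signed measure $G_{n,q}$, and then transfer the result via a contour integral. Since $f_1, f_2$ are analytic in a neighborhood of the interval \eqref{interval}, one may write
\begin{align*}
X_n(f, q) = - \frac{1}{2 \pi i} \oint_{\mathcal{C}} f(z) \, M_n(z, q) \, dz, \qquad M_n(z, q) := \sqrt{n} \, s_{G_{n,q}}(z),
\end{align*}
where $s_{G_{n,q}}$ is the Stieltjes transform of the signed measure $G_{n,q}$ and $\mathcal{C}$ encloses the interval. The task thus reduces to the joint weak convergence of the processes $(M_n(\cdot, q_1), M_n(\cdot, q_2))$ on two non-overlapping contours $\mathcal{C}_1, \mathcal{C}_2$ to a bivariate Gaussian process whose covariance kernel is the integrand in \eqref{cov_X}; the continuous mapping theorem applied to the contour integrals then delivers the theorem.

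For the process-level convergence, the first step is the standard truncation-and-centering of the $x_{ij}$, carried out at level $\eta_n \sqrt{n}$ with $\eta_n \downarrow 0$; assumption \ref{ass_lindeberg} is precisely what makes the truncation error negligible while preserving the moments needed below. I would then split
\begin{align*}
M_n(z, q) = \sqrt{n} \bigl[ s_{G_{n,q}}(z) - \E s_{G_{n,q}}(z) \bigr] + \sqrt{n}\, \E s_{G_{n,q}}(z)
\end{align*}
into a random (martingale) part and a deterministic (bias) part, and handle the two separately. The random part is treated through a martingale decomposition with respect to the filtration generated by the first $j$ columns of $\bfX_n$, applied simultaneously to $p (s_n - \E s_n)$ and $(p-1) (s_n^{(-q)} - \E s_n^{(-q)})$; note that deleting the $q$th row of $\bfSigma_n\sq$ alters the quadratic forms in each martingale increment but not the filtration itself. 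The martingale CLT then yields finite-dimensional convergence, with the Lyapunov condition coming from the fifth-moment assumption in \ref{A1}, and the convergence of the predictable quadratic variation being exactly the content of \ref{A3} (responsible for the kernel $\sigma^2$) and \ref{A5} (responsible for the $\nu_4$-dependent kernel $\tau^2$): after applying Sherman--Morrison-type identities to isolate the $j$th column, the trace expressions involve precisely $\lb \bfI + \su(z)\bfSigma_n \rb\inv$ and its zero-padded deleted counterpart $\lb \tilde\bfI^{(-q)} + \su(z) \tilde\bfSigma_n^{(-q)} \rb^-$. Tightness on the contours follows from the usual resolvent moment bounds as in \cite{baisilverstein2004}.

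The main obstacle is the bias $\sqrt{n}\, \E s_{G_{n,q}}(z)$. Individually, $p\,\E s_n(z) - p\, s_n^0(z)$ is only of order $O(1)$, and in the single-statistic setting its control ordinarily requires a Gaussian-type fourth-moment condition or extra structural hypotheses on $\bfSigma_n$ \citep{panzhou2008}. The crucial observation, and the technical heart of the proof, is that the $O(1)$ bias contributions --- including the problematic cumulant-type terms --- cancel between $\hat\bfSigma_n$ and $\hat\bfSigma_n^{(-q)}$, leaving only a residual of order $o(1/\sqrt{n})$ uniformly on the contour. To make this cancellation rigorous I would perform a refined expansion of $\E s_n(z) - s_n^0(z)$ tracking both the leading term and the next-order correction in $n^{-1/2}$, together with the analogous expansion for the $q$-deleted matrix, and then compare the two expansions term by term. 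This is the step in which the subtle refinements of the Bai--Silverstein strategy must be introduced; it is also what allows assumptions \ref{A3} and \ref{A5}, which govern only the covariance structure, to suffice without any further moment or structural assumption on $\bfSigma_n$. Once the bias is shown to vanish and the martingale part converges to the Gaussian process with covariance kernels $\sigma^2$ and $\tau^2$, the contour integration delivers the stated joint CLT.
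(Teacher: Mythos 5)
Your proposal follows essentially the same route as the paper: contour-integral reduction via \eqref{cauchy} to the Stieltjes-transform process $M_{n,q}$, truncation under \ref{ass_lindeberg}, martingale-plus-bias decomposition, martingale CLT for finite-dimensional distributions, tightness, and bias cancellation between $\hat\bfSigma_n$ and $\hat\bfSigma_n^{(-q)}$. You also correctly locate the roles of \ref{A3} and \ref{A5} (they produce the kernels $\sigma^2$ and $\tau^2$) and the key observation that the cancellation of the $O(1)$ bias terms is what removes the need for additional structural assumptions on $\bfSigma_n$.

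One step is glossed over in a way that would cause trouble. You write that ``tightness on the contours follows from the usual resolvent moment bounds as in \cite{baisilverstein2004},'' but because $M_{n,q}$ carries an extra $\sqrt{n}$ relative to the process for a single linear statistic, the approximating process $\hat{M}_{n,q}$ must be defined with a finer truncation of the contour near the real line: distance $n^{-3/2}\varepsilon_n$ rather than $n^{-1}\varepsilon_n$ as in (9.8.2) of \cite{bai2004}. The paper does exactly this in \eqref{def_hat_m} and emphasizes in Remark \ref{rem1} that the change is essential; without it the approximation error $\int_{\mathcal{C}} f(z)\lb M_{n,q}(z) - \hat{M}_{n,q}(z) \rb dz$ controlled in Lemma \ref{lem_approx_m} need not vanish, since the resolvent is only $O(n^{3/2}\varepsilon_n^{-1})$ near the real line (see \eqref{d_inv_norm}) while the prefactor has been upscaled by $\sqrt{n}$. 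This modification then propagates into the bounds used for tightness. Your overall plan is sound, but you should not assume that the Bai--Silverstein contour construction can be imported verbatim.
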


 \begin{remark} \label{rem_lss}
    {\rm
            We would like to comment on our assumptions and compare our result to previous works. 
    \begin{enumerate}
        \item In the meanwhile classical work \cite{baisilverstein2004}, a CLT for the eigenvalue statistics of $\bfSigmahat_n$ was proven and attracted many researchers to work on related problems.
        It was also stated by these authors,  that the mean and variance of such statistics do not only depend on the eigenvalues of the population covariance matrix captured by assumption \ref{A2}, but,  under a non-Gaussian-type $4$th moment, also on the eigenvectors of $\bfSigma_n$ which cannot be controlled by such a condition. While \cite{baisilverstein2004, bai2004} rely on a Gaussian type $4$th moment condition $\nu_4 = \kappa +1$ in order to circumvent this problem, many researchers relaxed their assumptions in several directions. For example, \cite{panzhou2008} imposed a condition on $\bfSigma_n$, which ensures the convergence of the additional terms for mean and variance arising in the case $\nu_4 \neq \kappa +1$, while \cite{najimyao2016} verified  that the L\'{e}vy-Prohorov distance between the linear statistics’ distribution and a normal distribution, whose mean and variance may diverge, vanishes asymptotically.
       In this work, we consider a different type of statistic, namely a difference of linear spectral statistics of two highly dependent sample covariance matrices. The conditions \ref{A3} and \ref{A5} ensure the convergence of the variance in our setting. The latter condition is inspired by  formula (1.17) in \cite{panzhou2008}. While one needs to impose a further assumption such as condition (1.18) in \cite{panzhou2008} for the convergence of the mean when investigating the standard linear spectral statistics of $\hat\bfSigma_n$, an additional assumption for proving that the bias is negligible in our setting is in fact  not necessary. \\ 
 Although our contribution, like the aforementioned works, utilizes the tools provided by \cite{bai2004}, it is important to highlight that the weak convergence of the statistic examined in our study cannot be inferred from prior findings. In particular, the computation of the mean and covariance presents a non-trivial challenge, since the difference in the two linear spectral statistics fluctuate on a  significantly smaller scale than each individual statistic. 
       \item         We  emphasize that the condition \ref{A5} is not necessary if the data admits a fourth moment of Gaussian type, that is, $\nu_4 = \kappa + 1$. 
       Furthermore, if $\bfSigma_n$ is a diagonal matrix with diagonal entries $\Sigma_{ii} = \Sigma_{ii}^{(n)},$ $ 1 \leq i \leq p,$, then we have $h_{q_1,q_2}(z_1, z_2)=0$ for $1\leq q_1 \neq q_2 \leq p$ and
       \begin{align*}
           h_{q, q} (z_1, z_2) 
           = \lim\limits_{n\to\infty} \frac{\Sigma_{qq}^2}{(1 + \su(z_1) \Sigma_{qq} ) (1 + \su(z_2) \Sigma_{qq} ) }.
       \end{align*}
      In this case, the limits in condition \ref{A3} satisfy $g_{q_1,q_2}(z_1, z_2)=0$ for $1 \leq q_1 \neq q_2 \leq p$, and 
       \begin{align*}
          g_{q,q}(z_1,z_2) = \lim_{n\to\infty} \frac{\Sigma_{qq}}{1 + \su(z_1) \Sigma_{qq}} .
       \end{align*}
       (for a proof see the discussion surrounding equation  \eqref{tr_F_diagonal_case}
       in Section \ref{sec_proof_thm_stieltjes}). Summarizing, in the diagonal case, the conditions  \ref{A3} and \ref{A5} can be replaced by assuming that the limits $\lim_{n\to\infty}\Sigma_{qq}$ for $q\in \{q_1,q_2\}$ exist. 
       \item For our proof, we assume that moments up to order $5$ exist, which is needed for sharper concentration inequalities of certain quadratic forms involving the random variables $x_{ij}, 1\leq i \leq p, 1 \leq j \leq n $. 
       This assumption might be improved to the optimal $4$th moment condition, but we do not pursue in this direction. Indeed, the condition on the $5$th moment is a substantial improvement compared to previous results. In particular, the work by \cite{erdoes} provides a special case of Theorem \ref{thm_lss} for the null case $\bfSigma_n = \bfI$, and the authors assumed the existence of moments   of any order (and that the random variables $x_{ij}$ are i.i.d.).       Moreover, our result provides the joint convergence of the difference of linear spectral statistics corresponding to functions $f_1, f_2$, while the work \cite{erdoes} covers the case of a single difference corresponding to one function $f_1$ in  the case $\bfSigma_n = \bfI$ and $q_1=q_2$. 
   On the other hand, 
       they allow  for a less regular class of functions used in the definition of  the eigenvalue statistics.  
       Using the Helffer–Sjöstrand formula \citep[see, for example][]{erdoes},
     we expect that one can obtain similar  results  as presented  in this paper under weaker smoothness assumptions of the function $f$. 
       \item The Lindeberg-type condition \ref{ass_lindeberg} ensures a proper truncation of the random variables $x_{ij}$. Note that this assumption is somewhat stronger compared to 
       (9.7.2) in \cite{bai2004} due to the different scaling needed when considering the difference of eigenvalue statistics. 
    \end{enumerate}
} 
\end{remark}

We conclude this section studying the  joint limiting distribution of linear spectral statistics 
\begin{align*}
    X_n(f) = 
		 \int f(x) d G_{n} (x),
	\end{align*}
and their differences $X_n(f,q)$,
where  the random measure $G_{n} $ is defined by 
$$
G_{n}(x) = p \big ( F^{\hat\bfSigma_n}(x) - F^{y_{n}, H_{n} } (x) \big ),
$$
and $f$  is some appropriate function, as in Theorem \ref{thm_lss}.
The Gaussian limiting distribution $X(f)$ of $(X_n(f))_{n\in\N}$
is characterized in Theorem 1.4 of  \cite{panzhou2008}, who need 
weaker moment conditions as the original work  of \cite{baisilverstein2004}
\citep[see also][for further important generalizations]{zheng_et_al_2015, najimyao2016}.
The following result provides the joint limiting distribution of the usual linear spectral statistics $X_n(f)$ and the difference-type statistics $X_n(f,q)$ considered in this work.
The proof can be found in Section \ref{sec_proof_applications}. 

\begin{theorem} \label{thm_asympt_ind}
    Under the assumptions of Theorem \ref{thm_lss} and Theorem 1.4 of \cite{panzhou2008}, the sequences $(X_n(f_1,q))$ and $(X_n(f_2))$ are asymptotically independent. Thus, the joint limiting distribution of $(X_n(f_1,q), X_n(f_2))^\top$ is $(X(f_1,q), X(f_2))^\top$
    , where $X(f_1,q)$ is defined in Theorem \ref{thm_lss},
and $X(f_2)$ is the  Gaussian limiting distribution of $(X_n(f))_{n\in\N}$
characterized in Theorem 1.4 of  \cite{panzhou2008} (independent of $X(f_1,q)$).
 
\end{theorem}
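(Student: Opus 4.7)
The plan is to leverage the Stieltjes-transform and contour-integral machinery already used for Theorem \ref{thm_lss} and for Theorem 1.4 of \cite{panzhou2008}, write both statistics as contour integrals of the associated Stieltjes processes, and show that their joint limit is bivariate Gaussian with vanishing cross-covariance. By Cauchy's formula,
\begin{align*}
X_n(f_1,q) & = - \frac{1}{2\pi i} \oint_{\mathcal{C}_1} f_1(z)\, M_n^{\Delta,q}(z)\, dz, \\
X_n(f_2) & = - \frac{1}{2\pi i} \oint_{\mathcal{C}_2} f_2(z)\, M_n(z)\, dz,
\end{align*}
where $M_n(z) = p \{ s_{F^{\hat\bfSigma_n}}(z) - s_{y_n, H_n}(z) \}$ is the centred Stieltjes process analysed in \cite{panzhou2008} and $M_n^{\Delta,q}(z) = \sqrt{n}\{M_n(z) - M_n^{(-q)}(z)\}$, with $M_n^{(-q)}$ the analogue for $\hat\bfSigma_n^{(-q)}$; the weak limit of $M_n^{\Delta,q}$ was derived in the proof of Theorem \ref{thm_lss}. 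Since both marginal limits are Gaussian, asymptotic independence reduces by the Cramér--Wold device to (i) joint tightness of $(M_n, M_n^{\Delta,q})$ on $\mathcal{C}_1 \cup \mathcal{C}_2$, (ii) joint convergence of finite-dimensional distributions to a Gaussian limit, and (iii) $\lim_{n\to\infty} \E [M_n^{\Delta,q}(z_1) \overline{M_n(z_2)}] = 0$ for all $(z_1,z_2) \in \mathcal{C}_1 \times \mathcal{C}_2$.

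Items (i) and (ii) are essentially inherited from the individual CLTs: both processes are constructed from the same matrix $\bfX_n$ and analysed via the same Bai--Silverstein truncation, centralization, and martingale-difference decomposition with respect to the filtration $\mathcal{F}_k = \sigma(\bfx_{\cdot,1}, \ldots, \bfx_{\cdot,k})$; the Lindeberg-type martingale CLT applied to any linear combination $\alpha M_n(z_2) + \beta M_n^{\Delta,q}(z_1)$ yields joint Gaussianity, and the marginal tightness estimates in the proofs of Theorem \ref{thm_lss} and of Theorem 1.4 of \cite{panzhou2008} apply simultaneously to the pair. For the cross-covariance (iii), writing $M_n(z) = \sum_{k=1}^n D_k(z)$ and $M_n^{(-q)}(z) = \sum_{k=1}^n D_k^{(-q)}(z)$ as martingale sums and using orthogonality of martingale differences, one obtains
\begin{align*}
\E \bigl[ M_n^{\Delta,q}(z_1)\, \overline{M_n(z_2)} \bigr] = \sqrt{n}\, \sum_{k=1}^n \E \bigl[ \bigl( D_k(z_1) - D_k^{(-q)}(z_1) \bigr) \overline{D_k(z_2)} \bigr].
\end{align*}
The rank-one-perturbation identities from Chapter 9 of \cite{bai2004} express each $D_k(z)$ as a centred quadratic form in the $k$-th column of $\bfX_n$ against a resolvent of the leave-one-column-out matrix, so that $D_k(z) - D_k^{(-q)}(z)$ becomes the corresponding resolvent perturbation caused by erasing row and column $q$.

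The main obstacle is extracting enough cancellation from this trace representation to show that the above sum is $o(1)$ rather than just $O(1)$, since a direct Cauchy-Schwarz bound exploiting only the sizes of the individual factors fails to decay when $p/n \to y \in (0,\infty)$. The required additional decay comes from the fact that, to leading order, $\E [ D_k(z_1) \overline{D_k(z_2)}]$ and $\E [ D_k^{(-q)}(z_1) \overline{D_k(z_2)}]$ coincide: both are deterministic equivalents given by the Pan--Zhou covariance kernel associated with the common spectral limit $H$. After replacing the $\mathcal{F}_{k-1}$-conditional quadratic forms by their traces through the concentration lemmas in \cite{bai2004}, the leading deterministic part of $\E [(D_k - D_k^{(-q)}) \overline{D_k}]$ telescopes and only subleading contributions survive, controlled by precisely the resolvent perturbations $(\bfI + \underline{s}(z)\bfSigma_n)\inv$ versus $(\tilde\bfI^{(-q)} + \underline{s}(z)\tilde\bfSigma_n^{(-q)})^{-}$ that already appear in assumptions \ref{A3} and \ref{A5}. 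Summation in $k$ together with the $\sqrt{n}$-prefactor then produces $o(1)$; integrating along the contours yields $\lim_{n\to\infty}\cov (X_n(f_1,q), X_n(f_2)) = 0$, and joint Gaussianity of the limit promotes zero covariance to full asymptotic independence, concluding the proof.
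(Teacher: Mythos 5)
Your proposal follows essentially the same route as the paper: reduce to vanishing of the cross-covariance of the Stieltjes processes, pass to the common martingale-difference decomposition, and exploit the perturbation structure of the resolvent difference to extract the crucial extra decay beyond what naive Cauchy--Schwarz gives. You correctly identify the central difficulty --- that a bound of $O(1)$ on $\cov(M_{n,q}^{(1)}(z_1),M_n^{(1)}(z_2))$ would not suffice and one must push to $o(1)$ --- and your conclusion that the $\sqrt{n}$ prefactor and the $j$-summation then close the argument is consistent with the paper's accounting $n^{-3/2}\sum_{j=1}^n\tr(\cdots)=O(n^{-1/2})$.

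The place where the proposal is materially looser than the paper is in \emph{how} that extra decay is secured. You attribute it to a ``telescoping of deterministic equivalents,'' i.e.\ that $\E[D_k(z_1)\overline{D_k(z_2)}]$ and $\E[D_k^{(-q)}(z_1)\overline{D_k(z_2)}]$ have the same leading Pan--Zhou limit, and you say the remainders are ``controlled by precisely the resolvent perturbations that already appear in assumptions \ref{A3} and \ref{A5}.'' Neither assertion quite captures the decisive estimate. Agreement of the \emph{limits} does not, on its own, control $\sqrt{n}\sum_k \E[(D_k-D_k^{(-q)})\overline{D_k}]$: one needs the finite-$n$ trace
\begin{align*}
\tr\bigl(\E_j[\bfB_{qj}(z_1)]\,\E_j[\bfSigma\D_j^{-1}(z_2)]\bigr)
\end{align*}
to be $O(1)$ rather than $O(n)$, uniformly in $j$. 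The paper obtains this from the argument underlying Lemma \ref{lem_tr_B}: the resolvent difference $\D_j^{-1}(z)-\tilde\D_{j(q)}^{-}(z)$ is an explicit sum of matrices whose nonzero entries live only in the $q$th row and column (identity \eqref{diff_resolv}), so its trace against any spectrally bounded matrix is $O(1)$. That rank-structure observation is the real engine, and your sketch does not surface it. In addition, assumptions \ref{A3} and \ref{A5} play no role here; they are used to ensure convergence of the \emph{variance} kernel of the difference statistic, not to kill the cross-term with $M_n$. So the route is right, but to make the argument carry you should replace the ``telescoping'' heuristic and the appeal to \ref{A3}/\ref{A5} by the explicit $O(1)$ trace bound for $\bfB_{qj}$ against $\bfSigma\D_j^{-1}$ (and for the corresponding Hadamard-product term) coming from Lemma \ref{lem_tr_B}.
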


\section{Some special cases}
\label{sec_applications}

In the null case $\mathbf{\Sigma}_n = \bfI$, the contour integrals describing the covariance structure of the limiting Gaussian random vector in Theorem \ref{thm_lss} can be expressed via integrals over the unit circle, which allows the explicit calculation for given functions $f_1, f_2$. 
The proof of the following result is postponed to Section \ref{sec_proof_applications}.
	\begin{proposition} \label{prop_formula}
	 Let $h = \sqrt{y}\in (0,\infty)$, $\bfSigma_n = \mathbf{I}$, $q_1, q_2 \in \N$ and let $f_1$ and $f_2$ be functions which are analytic on an open region containing the interval in \eqref{interval}. For the random vector $\big( X(f_1,q_1), X(f_2, q_2)\big)$  given in Theorem \ref{thm_lss}, we have the following covariance structure  
	 \begin{align*}
	      &  \cov (X(f_1,q_1) , X(f_2,q_1) ) \\ 
		& 	=   -  \frac{ \kappa}{2 \pi^2}
			\lim\limits_{\substack{r_2 > r_1, \\ r_1, r_2 \searrow 1}}		
			\oint\limits_{|\xi_1|=1} \oint\limits_{|\xi_2|=1} 
			f_1 (  1 + h r_1 \xi_1 + h r_1\inv \xi_1\inv + h^2  )
			\overline{f_2 (  1 + h r_2 \xi_2\inv + h r_2\inv \xi_2 + h^2  ) }
			\\ & \times \frac{ r_1 r_2 (r_1 r_2 \xi_1 + \xi_2) } {h^2 (r_1 r_2 \xi_1 - \xi_2)^3} d \xi_2 d \xi_1 
			 - \frac{\nu_4 - \kappa - 1}{2 \pi^2}
			\lim\limits_{\substack{r_2 > r_1, \\ r_1, r_2 \searrow 1}}		
			\oint\limits_{|\xi_1|=1} \oint\limits_{|\xi_2|=1} 
			f_1 (  1 + h r_1 \xi_1 + h r_1\inv \xi_1\inv + h^2  )
		\\ & \times 	\overline{f_2 (  1 + h r_2 \xi_2\inv + h r_2\inv \xi_2 + h^2  ) }
			\frac{ 1}{h^2 r_1 r_2 \xi_1^2 } d\xi_2 d \xi_1, 
			 \\ 
			& \cov (X(f_1,q_1) , X(f_2,q_2) ) = 0, ~ q_1 \neq q_2. 
	 \end{align*}
	 \end{proposition}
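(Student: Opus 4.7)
The plan is to specialize the covariance formula \eqref{cov_X} to the null case $\bfSigma_n = \bfI$ and then apply the classical Mar\v{c}enko--Pastur substitution to evaluate the resulting contour integrals explicitly.

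First, since $\bfSigma_n = \bfI$ is diagonal with $\Sigma_{ii} = 1$ for all $i$, Remark \ref{rem_lss}(2) yields that the limits $g_{q_1,q_2}(z_1,z_2)$ and $h_{q_1,q_2}(z_1,z_2)$ vanish identically whenever $q_1 \neq q_2$, while for $q_1 = q_2 = q$ they collapse to
\begin{align*}
    g_{q,q}(z_1, z_2) = \frac{1}{1 + \su(z_1)}, \qquad
    h_{q,q}(z_1, z_2) = \frac{1}{(1 + \su(z_1))(1 + \su(z_2))}.
\end{align*}
Since the kernels $\sigma^2(z_1,z_2,q_1,q_2)$ and $\tau^2(z_1,z_2,q_1,q_2)$ in \eqref{cov_X} are built from these two families of limits, they both vanish when $q_1 \neq q_2$, giving $\cov(X(f_1,q_1),X(f_2,q_2)) = 0$ in that case.

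For $q_1 = q_2$ I would perform the change of variables that is standard in random matrix theory when $H = \delta_1$. With $h = \sqrt{y}$, the parametrization
\begin{align*}
    z_j = 1 + h^2 + h r_j \xi_j + h r_j^{-1} \xi_j^{-1}, \qquad |\xi_j| = 1, \; r_j > 1, \; j = 1, 2,
\end{align*}
traverses a closed contour around the Mar\v{c}enko--Pastur support, and a direct check from \eqref{repl_a47} with $dH = d\delta_1$ yields $\su(z_j) = -1/(1 + h r_j \xi_j)$ and $dz_j = h(1 - r_j^{-2} \xi_j^{-2}) \, d\xi_j$. Substituting these into the explicit forms of $\sigma^2$ and $\tau^2$, and choosing the two $\xi$-contours of radii $r_1 < r_2$ to preserve the non-overlapping condition of Theorem \ref{thm_lss}, reduces \eqref{cov_X} to a double integral over the two circles; passing $r_1, r_2 \searrow 1$ then justifies the arguments of $f_1, f_2$ appearing in the statement. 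The conjugation $\overline{f_2(z_2)}\,\overline{dz_2}$ on the second contour corresponds to the symmetry $\xi_2 \mapsto \xi_2^{-1}$ on $\{|\xi_2| = 1\}$, which is why the stated formula carries $\xi_2^{-1}$ and $\xi_2$ swapped in the argument of $f_2$.

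The main obstacle will be the algebraic simplification that produces the specific kernels $r_1 r_2(r_1 r_2 \xi_1 + \xi_2)/(h^2(r_1 r_2 \xi_1 - \xi_2)^3)$ and $1/(h^2 r_1 r_2 \xi_1^2)$ from $\sigma^2$ and $\tau^2$. A useful organizing identity is
\begin{align*}
    \frac{r_1 r_2 \xi_1 + \xi_2}{(r_1 r_2 \xi_1 - \xi_2)^3} = \frac{d}{d\xi_2}\, \frac{\xi_2}{(r_1 r_2 \xi_1 - \xi_2)^2},
\end{align*}
which reflects how one differentiation upgrades the classical Bai--Silverstein kernel $(\xi_1 - \xi_2)^{-2}$ to the upscaled form needed for the difference-type statistic. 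Once the Jacobians $(1 - r_j^{-2}\xi_j^{-2})$ and the factor $1 + \su(z_j) = h r_j \xi_j /(1 + h r_j \xi_j)$ are combined and the poles are tracked, the stated expressions should follow after careful bookkeeping of signs and of the ordering $r_2 > r_1$, which is precisely what prevents the integrand from encountering the pole at $\xi_2 = r_1 r_2 \xi_1$ during contour deformation.
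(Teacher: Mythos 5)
Your approach matches the paper's: vanishing of $\sigma^2,\tau^2$ for $q_1\neq q_2$ in the diagonal case, the Mar\v{c}enko--Pastur contour parametrization, substitution into \eqref{cov_X}, and passing $r_1,r_2\searrow 1$. One small slip worth flagging: from $z_j = 1 + h^2 + h r_j \xi_j + h r_j^{-1}\xi_j^{-1}$ the Jacobian is $dz_j = h\lb r_j - r_j^{-1}\xi_j^{-2}\rb d\xi_j$, not $h\lb 1 - r_j^{-2}\xi_j^{-2}\rb d\xi_j$ — you have dropped a factor of $r_j$, which would throw off the final kernels if carried through the bookkeeping you describe.
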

If the  functions $f_1, f_2$ are explicitly specified, the integrals in Proposition \ref{prop_formula} can be calculated. In the following corollary  we will demonstrate this for some examples.  Its  proof is deferred to Section \ref{sec_proof_applications}.  
\begin{corollary}\label{cor_log}
Let $q_1, q_2 \in \N$, $p/n \to y \in (0,\infty)$ and $\bfSigma_n = \bfI.$ We assume that conditions \ref{A1} and \ref{ass_lindeberg} hold true. Then, we have 
\begin{align*}
      \sqrt{n} \left( \tr \lb  \bfSigmahat_n \rb  - \tr \lb \bfSigmahat_n^{(-\ell)} \rb  - 1  
    \right) ^\top_{\ell=q_1,q_2}
   & \cond \mathcal{N}_2 \big( \mathbf{0},  ( 2 \kappa  + (\nu_4 - \kappa - 1)  ) \bfI_2  \big), \\ 
      \sqrt{n} \left( \tr \big (   \bfSigmahat_n^2  \big )   - \tr \lb \big ( \bfSigmahat_n^{(-\ell )} \big )^2 \rb - \Big (  1 +  \frac{2p}{n} \Big ) 
    \right) ^\top_{\ell=q_1,q_2}  & \cond \mathcal{N}_2  ( \mathbf{0}, d~\bfI_2 )~,
\end{align*}
where $d=  ( 8 \kappa ( 1 +3y +y^2)  + 4 (\nu_4 - \kappa - 1) (1+y)^2  )$. If $y\in(0,1),$ we also have
\begin{align*}
    \sqrt{n} \left( \log \big| \bfSigmahat_n \big| - \log \big| \bfSigmahat_n^{(-\ell)} \big| - \log \big( \frac{n - p + 1}{n} \big)
    \right)^\top_{\ell=q_1,q_2}   & \cond \mathcal{N}_2 \big( \mathbf{0},  ( \kappa / ( 1 - y) + (\nu_4 - \kappa - 1) )
    \bfI_2 
    \big) .
\end{align*} 
\end{corollary}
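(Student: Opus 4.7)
My plan is to recognize the three scalar statistics in the corollary as the difference-type linear spectral statistics $X_n(f,q)$ of Theorem \ref{thm_lss} with $\bfSigma_n=\bfI$ and $f(x)=x$, $f(x)=x^2$, and $f(x)=\log x$ respectively, and then to evaluate the limiting covariance via Proposition \ref{prop_formula}. First I would verify that the explicit centering constants $1$, $1+2p/n$ and $\log((n-p+1)/n)$ agree with the intrinsic centering $p\int f\,dF^{y_n,\delta_1}-(p-1)\int f\,dF^{(p-1)/n,\delta_1}$ prescribed by $G_{n,q}$ up to $O(1/n)$, which is negligible on the $\sqrt{n}$ scale. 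Using the Mar\v{c}enko-Pastur moments $\int x\,dF^{y,\delta_1}=1$, $\int x^2\,dF^{y,\delta_1}=1+y$, and, for $y\in(0,1)$, $\int\log x\,dF^{y,\delta_1}=\tfrac{y-1}{y}\log(1-y)-1$, these intrinsic centerings evaluate to $1$, $1+(2p-1)/n$, and, setting $m=n-p$, to $(m+1)\log(m+1)-m\log m-\log n-1=\log((n-p+1)/n)-\tfrac{1}{2(n-p)}+O(1/n^2)$; each differs from the constant stated in the corollary by $O(1/n)$.

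Once the centering is handled, I would apply Theorem \ref{thm_lss} jointly with Proposition \ref{prop_formula} to get asymptotic normality. The vanishing of the off-diagonal covariance $\cov(X(f_i,q_1),X(f_j,q_2))=0$ for $q_1\neq q_2$ established in Proposition \ref{prop_formula} immediately makes the limiting $2\times 2$ covariance matrix a scalar multiple of $\bfI_2$, so what remains is to compute the common variance $\Var\,X(f,q)$ for each of the three choices of $f$.

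The bulk of the work will be the residue evaluation of the double contour integrals in Proposition \ref{prop_formula}. For each $f$ my recipe is: substitute $f(1+hr\xi+hr^{-1}\xi^{-1}+h^2)$ into the integrand, use $\overline{\xi}=\xi^{-1}$ on $|\xi|=1$ to rewrite $\overline{f(\cdot)}$ as a rational function of $\xi_2$, and then integrate inward over $|\xi_2|=1$. Because $|r_1r_2\xi_1|=r_1r_2>1$, the pole at $\xi_2=r_1r_2\xi_1$ lies outside the contour, so only the pole at $\xi_2=0$ contributes. For $f(x)=x$ only simple Laurent coefficients are needed, and the residue computation produces $2\kappa+(\nu_4-\kappa-1)$. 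For $f(x)=x^2$ the integrand has a pole of order up to three at $\xi_2=0$; extracting this residue, performing the analogous $\xi_1$-residue and letting $r_1,r_2\searrow 1$ yields $8\kappa(1+3y+y^2)+4(\nu_4-\kappa-1)(1+y)^2$. For $f(x)=\log x$ (which requires $y\in(0,1)$ so that $\log$ is analytic on a neighbourhood of the Mar\v{c}enko-Pastur support) I would use the factorization $1+h\xi+h\xi^{-1}+h^2=(1+h\xi)(1+h\xi^{-1})$ at $r=1$ to split $\log f$ into $\log(1+h\xi)+\log(1+h\xi^{-1})$; the resulting integrals reduce to elementary contour computations and give $\kappa/(1-y)+(\nu_4-\kappa-1)$.

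The hardest part will be the bookkeeping in the $f(x)=x^2$ residue, where several cross terms of degree up to $h^4$ must be correctly combined, and, in the logarithmic case, the justification of exchanging the limit $r_1,r_2\searrow 1$ with the residue extraction (standard once one uses uniform analyticity of $\log$ on a neighbourhood of the support, but requiring a short dominated-convergence argument). Beyond these bookkeeping steps, the argument needs no analytic input beyond Theorem \ref{thm_lss} and Proposition \ref{prop_formula}.
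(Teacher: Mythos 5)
Your proposal follows exactly the paper's own route: you recognise the statistics as $X_n(f,q)$ with $f(x)=x,\,x^2,\,\log x$, verify (using the Mar\v{c}enko--Pastur moments $\int x\,dF^{y_n,\delta_1}=1$, $\int x^2\,dF^{y_n,\delta_1}=1+y_n$ and Example 2.11 of Yao et al.) that the displayed centerings agree with the intrinsic ones up to $O(1/n)$, invoke Theorem \ref{thm_lss} together with Proposition \ref{prop_formula} and the vanishing of the $q_1\neq q_2$ covariance, and finish with residue evaluation of the unit-circle integrals — which the paper, too, declares a ``standard task'' and omits. The only step the paper makes explicit that you leave tacit is the verification that assumptions \ref{A2}, \ref{A3}, \ref{A5} hold trivially for $\bfSigma_n=\bfI$ (Remark \ref{rem_lss}), but this is implicit in your plan.
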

The choice of the logarithm reveals an interesting connection to another type of random matrix, namely the sample precision matrix $\bfSigmahat_n\inv$. 
In particular, in the case $f(x) = \log (x) $, the difference of linear spectral statistics corresponding to $\bfSigmahat_n$ and $\bfSigmahat_n^{(-q)}$ is basically the logarithmic diagonal entry of $\bfSigmahat_n\inv$. More precisely, Theorem \ref{thm_lss} provides a multivariate central limit theorem for $\big( (\bfSigmahat_n\inv)_{q_1,q_1}, ( \bfSigmahat_n\inv)_{q_2,q_2} \big).$
Combining this with the delta method, we can extend the result in \cite{dornemann}, where the authors imposed a diagonal assumption on the population covariance matrix $\bfSigma_n$ and worked in the i.i.d. setting. 
In the case  where $\bfSigma_n $ is a diagonal matrix  and 
and $y\in (0,1)$, we can confirm the result in \cite{dornemann} on the diagonal entries of $\bfSigmahat_n\inv$ using Corollary \ref{cor_log} and the delta method.

 \section{Proofs } \label{sec_proof_thm_stieltjes}
 
\subsection{Main steps in the proof of Theorem \ref{thm_lss}}
We begin with the usual truncation argument and may assume without loss of generality that the entries of $\bfX_n$ additionally satisfy $|x_{ij}| \leq \eta_n \sqrt{n}.$ Using  Assumption \ref{ass_lindeberg}, this step can be formally justified by similar arguments as given in Section 9.7.1 of  \cite{bai2004}. \\ 
A frequently used powerful tool in random matrix theory is the Stieltjes transform. This 
 is partially explained by the formula
	\begin{align} \label{cauchy}
		\int f(x) dG(x)  &= \frac{1}{2\pi i} \int \int_\mathcal{C} \frac{f(z)}{z-x} dz dG(x)  
		= - \frac{1}{2 \pi i} \int_\mathcal{C} f(z) s_G(z) dz,  
	\end{align}
	where $G$ is an arbitrary cumulative distribution function (c.d.f.) with  a compact support, $f$ is an arbitrary analytic function on an open set, say $O$, containing the support of $G$, $\mathcal{C}$ is a positively oriented contour in $O$ enclosing the support of $G$ and $s_G$
	denotes the Stieltjes transform of $G $. Note  that \eqref{cauchy} follows from 
	Cauchy’s integral formula \citep[see, e.g.,][]{ahlfors1953complex} and Fubini’s theorem. Thus invoking the continuous mapping theorem, it may suffice to prove weak convergence for the sequence $(M_{n,q})_{n\in\N}$, where
	\begin{align} \label{def_M_n}
		M_{n,q}(z) = \sqrt{n}  \left\{ p \lb s_{F^{\mathbf{\hat\bfSigma}}} (z) - s_{{F}^{y_{n}, H_{n}}} (z) \rb 
        - (p - 1) \lb s_{F^{\mathbf{\hat\bfSigma^{(-q)}}}} (z) - s_{{F}^{(p - 1)/n, H_{nq}}} (z) \rb 
  \right\}, ~~~  {z \in \mathcal{C}}.
	\end{align} 
 Here, $s_{F^{y_n, H_{n}}}$ denotes the Stieltjes transform of the generalized Mar\v cenko--Pastur distribution $F^{y_{n}, H_{n}}$ characterized through the equation
	\begin{align} \label{a50}
		s_{F^{y_{n}, H_{n} }}(z) = \int \frac{1}{\lambda  \lb 1 - y_{n} - y_{n} z s_{F^{y_{n}, H_{n} }}(z) \rb - z } dH_n (\lambda).
	\end{align}
 A similar formula to \eqref{a50} holds true for $s_{F^{(p - 1)/n, H_{nq}}}$.
The contour $\mathcal{C}$ in \eqref{def_M_n} has to be constructed in such a way that it encloses the support of  $F^{y_{n}, H_n}$, $F^{(p-1)/n, H_{nq}}$ and $F^{\hat \bfSigma}$ and $F^{\hat \bfSigma^{(-q)}}$ with probability $1$ for sufficiently large $n\in\N$. Note that $F^{\hat \bfSigma}$ and $F^{\hat \bfSigma^{(-q)}}$ have the same limiting spectral \cite[see, for example, Theorem A.44 in][]{bai2004}.

	In  order to  prove the weak convergence of \eqref{def_M_n},
we define a contour $\mathcal{C}$ as follows. Let $x_r$ be any number greater than the right endpoint of the interval \eqref{interval} and $v_0 >0 $ be arbitrary. Let $x_l$ be any negative number if the left endpoint of the interval  \eqref{interval} is zero. Otherwise, choose 
	\begin{align*}
		x_l \in \lb 0, \liminf\limits_{n\to \infty} \lambda_{p}(\T) I_{(0,1)} (y)  (1-\sqrt{y})^2 \rb.
	\end{align*}		
	Let
$
		\mathcal{C}_u = \{ x + i v_0 : x\in [x_l , x_r] \} ~, 
$
	\begin{align*}
		\mathcal{C}^+ = \{ x_l + i v : v \in [0,v_0] \} ~ \cup ~ \mathcal{C}_u ~\cup ~\{ x_r + i v : v \in [0,v_0] \},
	\end{align*}
and define   $\mathcal{C} = \mathcal{C}^+ ~\cup ~   \overline{\mathcal{C}^+}$, where $\overline{\mathcal{C}^+} = \{  \overline{z} ~|~z \in \mathcal{C}^+ \}$. 
Next,
consider a sequence $(\varepsilon_n)_{n \in \N}$  converging to zero such that for some $\alpha \in (0,1)$
	\begin{align*}
	\varepsilon_n \geq n ^{-\alpha}, 
	\end{align*}
  define
	\begin{align*}
		\mathcal{C}_l &=
		\{ x_l + iv : v \in [n^{-3/2} \varepsilon_n, v_0] \} \\
		\mathcal{C}_r & = \{ x_r + i v : v \in [ n^{-3/2} \varepsilon_n , v_0 ] \},
	\end{align*}
and consider the  set $\mathcal{C}_n = \mathcal{C}_l \cup \mathcal{C}_u \cup \mathcal{C}_r $.
We  define an approximation 
	 $\hat{M}_{n,q}$ of  the random variable ${M}_{n,q}$ for $z=x + iv\in\mathcal{C}^+$ by 
	\begin{align} \label{def_hat_m}
		\hat{M}_{n,q} (z) = 		
		\begin{cases}
			M_{n,q}(z) & \textnormal{ if } z \in \mathcal{C}_{n}, \\
			M_{n,q}(x_r + i n^{-3/2} \varepsilon_{n} ) & \textnormal{ if } x=x_r,~v\in [0,n^{-3/2} \varepsilon_{n} ], \\
			M_{n,q} (x_l + i n^{-3/2} \varepsilon_{n} ) & \textnormal{ if } x=x_l,~v\in [0,n^{-3/2} \varepsilon_{n} ].
		\end{cases}
	\end{align}
	In Lemma \ref{lem_approx_m} below, it is shown that the inequality $(\hat{M}_{n,q})_{n\in\N}$ approximates $(M_{n,q})_{n\in\N}$ appropriately in the sense that the corresponding linear spectral statistics
		\begin{align*}
			- \frac{1}{2 \pi i} \int_\mathcal{C} f(z) M_{n,q}(z) dz ~~~~ 
			\textnormal{ and } 
			- \frac{1}{2 \pi i} \int_\mathcal{C} f(z) \hat{M}_{n,q}(z) dz ~
		\end{align*}
 in \eqref{cauchy}
 coincide asymptotically. As a consequence, the weak convergence of the process \eqref{def_M_n} follows from that of  $\hat M_{n,q}$, which is established in the following theorem.  The  proof  is given in Section \ref{sec_proof_thm_stieltjes}.

\begin{remark} \label{rem1}
{\rm  Note that we use  a different definition of $\hat{M}_{n,q}$ in \eqref{def_hat_m} in contrast 
to formula  (9.8.2) in \cite{bai2004}  and formula (6.4) in \cite{diss},  which is essential for  Lemma \ref{lem_approx_m} to be correct. Indeed, we replaced $n\inv$ by $n^{-3/2}$ in the definition of $\hat{M}_{nq}$, $\mathcal{C}_l$ and $\mathcal{C}_r$. Although this change is crucial for our theory, it does not affect the results of \cite{bai2004} and \cite{diss} significantly, in the sense that most of the auxiliary results in these papers remain valid also under the new definition of $\hat{M}_{n,q}$.
}
\end{remark}

\begin{theorem}[Weak convergence for the process of Stieltjes transforms]
	\label{thm_stieltjes}
	Under the assumptions of Theorem \ref{thm_lss}, the sequence $((\hat{M}_{n,q_i} (z))_{z\in\mathcal{C}^+, i\in\{1,2\}})_{n\in\N} $  defined in \eqref{def_hat_m}
	converges weakly to a centered Gaussian process $(M_{q_i}(z))_{z\in\mathcal{C}^+, i\in \{1,2\} }$ in the space $\lb \mathcal{C}(\mathcal{C}^+ ) \rb^2$ . The covariance kernel  of the limiting process 
is given by 
	\begin{align*}
		\cov (M_{q_1} (z_1 ), M_{q_2} (z_2 ))  
		& = \E \left[ \lb M_{q_1} (z_1) - \E [ M_{q_1} (z_1)] \rb \overline{\lb M_{q_2} (z_2) - \E [ M_{q_2} (z_2) ] \rb }		\right] \\
		& = \kappa  \sigma^2(z_1,\overline{z_2}, q_1, q_2)   + ( \nu_4 - \kappa - 1)  \tau^2(z_1,\overline{z_2}, q_1, q_2)  , 
		~  ~ z_1, z_2 \in\mathcal{C}^+, ~ q_1, q_2 \in\N,
	\end{align*}
	where $ \sigma^2(z_1,z_2, q_1, q_2) $ and $ \tau^2(z_1,z_2, q_1, q_2)$ are defined in \eqref{def_sigma} and \eqref{def_tau}-
	\end{theorem}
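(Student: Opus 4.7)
The plan is to follow the Bai--Silverstein blueprint \citep{baisilverstein2004} with adaptations tailored to the difference of two linear spectral statistics. Decompose
$$
\hat M_{n,q}(z) = \hat M_{n,q}^{(1)}(z) + \hat M_{n,q}^{(2)}(z), \qquad \hat M_{n,q}^{(1)}(z) = \hat M_{n,q}(z) - \E \hat M_{n,q}(z),
$$
into the centered random part and the deterministic bias. The proof then consists of three steps: (i) establish tightness of $\hat M_{n,q}^{(1)}$ on $\mathcal{C}^+$, (ii) prove finite-dimensional convergence of $\hat M_{n,q}^{(1)}$ to a centered Gaussian process with the announced covariance kernel, and (iii) show that $\hat M_{n,q}^{(2)}(z) \to 0$ uniformly on $\mathcal{C}^+$.

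\textbf{Finite-dimensional convergence via a martingale CLT.} Let $\mathcal{F}_j = \sigma(\mathbf{x}_1, \ldots, \mathbf{x}_j)$ be the filtration generated by the columns of $\bfX_n$, and set $\E_j = \E[\cdot \mid \mathcal{F}_j]$. Writing $s_n(z) = \tr(\hat\bfSigma_n - z\bfI)^{-1}$ and $s_n^{(-q)}(z) = \tr(\hat\bfSigma_n^{(-q)} - z\bfI)^{-1}$, the numerator of $\hat M_{n,q}^{(1)}(z)$ equals $(s_n - s_n^{(-q)}) - \E(s_n - s_n^{(-q)}) = \sum_{j=1}^n (\E_j - \E_{j-1})(s_n - s_n^{(-q)})(z)$, exhibiting $\hat M_{n,q}^{(1)}$ as a sum of martingale differences $\gamma_{n,j,q}(z)$. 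A crucial refinement over the single-statistic setting is that each increment must be expanded as the difference of two resolvent expressions \emph{after} cancelling leading terms using Sherman--Morrison identities and the embedding $\tilde{\mathbf{B}}^{-}$, so that $\gamma_{n,j,q}$ has size commensurate with the new $\sqrt n$ upscaling rather than the naive bound available for individual traces. I would then apply a complex-valued martingale CLT, verifying (a) the Lyapunov condition $\sum_j \E |\gamma_{n,j,q}(z)|^4 \to 0$ using the fifth-moment assumption \ref{A1} together with standard concentration inequalities for quadratic forms, and (b) the convergence of the conditional covariance $\sum_j \E_{j-1}[\gamma_{n,j,q_1}(z_1) \overline{\gamma_{n,j,q_2}(z_2)}]$ to the limiting kernel. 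The Gaussian part of this kernel is identified with $\kappa \sigma^2(z_1,\overline{z_2},q_1,q_2)$ using assumption \ref{A3}, whereas the non-Gaussian fourth-cumulant contribution produces $(\nu_4 - \kappa -1)\tau^2(z_1,\overline{z_2},q_1,q_2)$ through the Hadamard product appearing in assumption \ref{A5}.

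\textbf{Tightness.} For tightness on $\mathcal{C}^+$ I would follow the standard route: establish the uniform moment bound $\sup_{z\in\mathcal{C}_n}\E |\hat M_{n,q}^{(1)}(z)|^2 = O(1)$ and a Lipschitz-type estimate $\E|\hat M_{n,q}^{(1)}(z_1)-\hat M_{n,q}^{(1)}(z_2)|^2 \leq K|z_1-z_2|^2$ via Burkholder's inequality applied to the martingale decomposition above. The modified truncation of the contour noted in Remark \ref{rem1}, with $n^{-1}$ replaced by $n^{-3/2}$, is precisely what keeps these bounds valid under the $\sqrt n$ upscaling, so that the portions $\mathcal{C}\setminus\mathcal{C}_n$ contribute negligibly to the integral in \eqref{cauchy} in the limit.

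\textbf{Main obstacle: bias control.} The hardest step is (iii), showing that the bias $\hat M_{n,q}^{(2)}(z)$ vanishes uniformly. Each of the two terms $\sqrt n \, p(\E s_{F^{\hat\bfSigma_n}}(z) - s_{F^{y_n,H_n}}(z))$ and its $(-q)$-analogue is only of order $O(\sqrt n)$ in general; under a non-Gaussian fourth moment, one would normally need strong structural assumptions on the eigenvectors of $\bfSigma_n$ to control each piece individually \citep[cf.\ condition (1.18) in][]{panzhou2008}. The strategy is to expand $\E s_{F^{\hat\bfSigma_n}}(z)$ to order $o(n^{-3/2})$ by iterated resolvent identities and to show that the corrections which would otherwise require extra eigenvector assumptions cancel exactly against their counterparts for the submatrix, since their leading behavior depends only on $H_n$ and $H_{nq}$, which agree in the limit. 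Executing this cancellation carefully --- and verifying that the residual errors are genuinely $o(n^{-1/2})$ after multiplication by $\sqrt n$ --- is the principal technical hurdle and the essential reason why no analogue of the Pan--Zhou structural assumption is required in the difference setting.
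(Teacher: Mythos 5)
Your proposal follows the paper's proof structure exactly: the same decomposition of $\hat M_{n,q}$ into a centered martingale part and a deterministic bias, a complex-valued martingale CLT (the paper uses Lemma 5.6 of Najim--Yao) for finite-dimensional convergence, tightness via second-moment Lipschitz bounds on a truncated contour, and a cancellation argument for the bias. The key ingredient you highlight --- that the martingale increments involve $\tr \bfB_{qj}(z)\bfB_{qj}(z)^\star$ being $O(1)$ rather than $O(n)$, established via the Sherman--Morrison identity and the zero-padded embedding $\tilde{\mathbf{B}}^{-}$ --- is precisely the paper's Lemma \ref{lem_tr_B} and the reason the extra $\sqrt{n}$ scaling produces a nondegenerate limit.

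The one step in your outline that would not quite go through as written is the Lyapunov condition $\sum_j \E|\gamma_{n,j,q}(z)|^4 \to 0$. With only a fifth moment on the $x_{ij}$, after the standard truncation at $\eta_n\sqrt{n}$ the quadratic-form concentration bound \eqref{bound_quad_form} gives $\E|\gamma_{n,j,q}|^4 \lesssim n^2 \cdot \eta_n^3 n^{-2.5} = \eta_n^3 n^{-1/2}$, so the Lyapunov sum is $O(n^{1/2}\eta_n^3)$; this vanishes only if $\eta_n = o(n^{-1/6})$, a rate which assumption \ref{ass_lindeberg} does not guarantee. The paper instead verifies the Lindeberg-type condition through a $(2+\delta)$-moment bound with $0<\delta\le 1/2$ (Lemma \ref{lem_bound_2+delta}); for $\alpha = 2+\delta \le 2.5$ the exponent $(2\alpha - 5)\vee 0$ in \eqref{bound_quad_form} is zero, so no $\eta_n$ factors enter and the sum is $O(n^{-\delta/2})$, which vanishes unconditionally. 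In other words, the exponent in the Lyapunov/Lindeberg check must be capped at $2.5$ to stay within what the fifth moment affords; this is exactly where the assumption of moments of order $5$ (and the choice $\delta\le 1/2$) is used. Aside from this correction, your outline matches the paper's argument.
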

    Such a reduction of the linear spectral statistics to the process of Stieltjes transforms has become standard in the literature on random matrices and thus, we will omit more details on the proof of Theorem \ref{thm_lss} using Theorem \ref{thm_stieltjes}. Indeed, the arguments in  the proof of Theorem \ref{thm_lss} on the basis Theorem \ref{thm_stieltjes} are almost identical to those given  in Section 6.2 of 
    \cite{diss}, and therefore omitted. The novelty of our techniques lies in the proof of Theorem \ref{thm_stieltjes}, on which we will concentrate in the following sections.

 \subsection{Proof of Theorem \ref{thm_stieltjes}}

To begin with, we decompose the process $M_{n,q}(z) = M_{n,q}^{(1)} (z) + M_{n,q}^{(2)} (z) $ into a random and a deterministic part, where
\begin{align}
M_{n,q}^{(1)} (z) 
	& =  \sqrt{n} \lb p s_{F^{\hat\bfSigma}}(z) - (p-1) s_{F^{\hat\bfSigma^{(-q)}}}(z)
	- \E \left[ p s_{F^{\hat\bfSigma}}(z) - (p-1) s_{F^{\hat\bfSigma^{(-q)}}}(z) \right] \rb, \label{def_mn1}\\
 M_{n,q}^{(2)} (z) & = \sqrt{n} \lb  \E \left[ p s_{F^{\hat\bfSigma}}(z) - (p-1) s_{F^{\hat\bfSigma^{(-q)}}}(z) \right]
 - p  s_{{F}^{y_{n}, H_{nq}}} (z) + (p - 1) s_{{F}^{(p - 1)/n, H_{nq}}} (z)
 \rb. \nonumber
\end{align}
    The assertion of Theorem \ref{thm_stieltjes} follows from the following results, whose proofs are carried out in the following sections. Our first result  provides the convergence of the finite-dimensional distributions of $({M}_{n,q}^{(1)})_{n\in\N}$. Its proof relies on a central limit theorem for martingale difference schemes and  is given in in Section \ref{sec_fidis}.
    \begin{theorem} \label{thm_fidis}
It holds 
	for all $k\in\N,  z_1,\ldots,z_k \in\mathbb{C}$, $\im (z_i) \neq 0$
	\begin{align}
 \nonumber 
		& ( M_{n,q_1}^{(1)}(z_1), \ldots , M_{n,q_1}^{(1)}(z_k), M_{n,q_2}^{(1)}(z_1), \ldots , M_{n,q_2}^{(1)}(z_k)  )^\top \\ &
~~~~~~~  ~~~~~~~  ~~~~~~~  ~~~~~~~  \cond ( M_{q_1}(z_1),  \ldots ,  M_{q_1}(z_k), M_{q_2}(z_1),  \ldots ,  M_{q_2}(z_k) )^\top~, 
		\label{conv_fidis}
 	\end{align}
 	where $(M_{q_1}(z), M_{q_2}(z))_{z \in \mathcal{C}^+}$ is a centered Gaussian process with covariance structure given in Theorem \ref{thm_stieltjes}.
\end{theorem}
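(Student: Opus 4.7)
The plan is to establish (\ref{conv_fidis}) by reducing it, via the Cramér--Wold device, to a scalar martingale central limit theorem for an arbitrary complex linear combination $S_n = \sum_{i,\ell} \alpha_{i,\ell}\,M_{n,q_\ell}^{(1)}(z_i)$. Following the strategy of \cite{baisilverstein2004}, I would introduce the column filtration $\mathcal{F}_j = \sigma(\mathbf{x}_1,\ldots,\mathbf{x}_j)$, denote conditional expectation given $\mathcal{F}_j$ by $\E_j$, and write
\begin{align*}
M_{n,q}^{(1)}(z) = \sum_{j=1}^n Y_{n,j}(z,q), \qquad Y_{n,j}(z,q) = \sqrt{n}\,(\E_j - \E_{j-1})\bigl[\tr \D_n(z) - \tr \D_n^{(-q)}(z)\bigr],
\end{align*}
with $\D_n(z) = (\bfSigmahat_n - z\bfI)^{-1}$ and $\D_n^{(-q)}(z) = (\bfSigmahat_n^{(-q)} - z\bfI)^{-1}$. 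Each $Y_{n,j}(z,q)$ is $\mathcal{F}_j$-measurable with vanishing conditional expectation given $\mathcal{F}_{j-1}$, so the corresponding complex linear combinations $W_{n,j}$ form a martingale difference array with $S_n = \sum_j W_{n,j}$.

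The second step is to expand $Y_{n,j}(z,q)$ by rank-one perturbation formulas. Setting $\rd_j = n^{-1/2}\bfSigma_n^{1/2}\mathbf{x}_j$ so that $\bfSigmahat_n = \sum_j \rd_j\rd_j^\star$, the Sherman--Morrison identity yields the standard representation
\begin{align*}
(\E_j - \E_{j-1})\tr \D_n(z) = -(\E_j - \E_{j-1})\frac{\rd_j^\star \D_{n,j}(z)^2 \rd_j}{1 + \rd_j^\star \D_{n,j}(z)\rd_j},
\end{align*}
where $\D_{n,j}(z)$ is the resolvent of the matrix obtained by removing $\rd_j\rd_j^\star$, and an analogous expression holds for $\tr \D_n^{(-q)}(z)$ with the $q$-th coordinate of $\rd_j$ deleted. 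The crucial manipulation is to subtract these two expressions and to move the $q$-deletion from the column to the resolvent, which is exactly the purpose of the padded inverse $\tB^{-}$ introduced in Section \ref{sec_gen}. Substituting the deterministic equivalents $-\su(z)(\bfI + \su(z)\bfSigma_n)^{-1}$ and $-\su(z)(\tilde\bfI^{(-q)} + \su(z)\tilde\bfSigma_n^{(-q)})^{-}$ for $\D_{n,j}(z)$ and its submatrix analogue, modulo $o_\PR(n^{-1/2})$ errors, one approximates each $Y_{n,j}(z,q)$ by a centered quadratic form in $\mathbf{x}_j$ against a deterministic $p\times p$ matrix that encodes precisely the geometric quantities in \ref{A3} and \ref{A5}.

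With this representation in place, the two hypotheses of the complex martingale CLT remain to be checked. The Lindeberg condition $\sum_j \E[|W_{n,j}|^2 I(|W_{n,j}|>\varepsilon)]\to 0$ reduces to a uniform Lyapunov bound $\max_j \E|W_{n,j}|^4\to 0$, which follows by combining the truncation \ref{ass_lindeberg}, the fifth-moment assumption \ref{A1}, and Burkholder-type inequalities for the quadratic forms in the truncated variables. The convergence of the conditional covariances
\begin{align*}
\sum_{j=1}^n \E_{j-1}\bigl[Y_{n,j}(z_1,q_1)\,\overline{Y_{n,j}(z_2,q_2)}\bigr] \ \conp \ \kappa\,\sigma^2(z_1,\overline{z_2},q_1,q_2) + (\nu_4 - \kappa - 1)\,\tau^2(z_1,\overline{z_2},q_1,q_2)
\end{align*}
then follows from the second-moment identity for quadratic forms $\mathbf{x}_j^\star \bfA\mathbf{x}_j$, whose two contributions $\tr(\bfA\bfA^\star)$ and $\sum_i |A_{ii}|^2$ assemble respectively into $\sigma^2$ via \ref{A3} and into $\tau^2$ via the Hadamard product in \ref{A5}.

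The main obstacle is the cancellation analysis highlighted in Remark \ref{rem_lss}: individually, $\tr\D_n(z)$ and $\tr\D_n^{(-q)}(z)$ fluctuate on scale $O_\PR(1)$, so their difference must be controlled on the finer scale $O_\PR(n^{-1/2})$ for the factor $\sqrt{n}$ to produce a non-degenerate limit. This forces the subtraction to be carried out \emph{before} invoking the standard $O_\PR(n^{-1/2})$-type concentration bounds, and requires tracking all lower-order corrections from the deterministic-equivalent substitutions so that only the genuine $q$-dependent difference survives the rescaling. Once this bookkeeping is done, the joint convergence across $q_1,q_2$ and across $z_1,\ldots,z_k$ is automatic from the Cramér--Wold reduction, since the coupling between the various components of the martingale differences is mediated solely by the deterministic kernels controlled by \ref{A3} and \ref{A5}.
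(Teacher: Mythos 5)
Your high-level framework is the same as the paper's: Cramér--Wold reduction, the column filtration $\mathcal{F}_j$, Sherman--Morrison to write the difference as a martingale-difference sum, the complex martingale CLT, and identifying the two contributions to the conditional covariance as $\sigma^2$ (trace term) and $\tau^2$ (Hadamard/diagonal term) via \ref{A3} and \ref{A5}. The Lindeberg step also goes through essentially as you describe, though note the paper deliberately uses a $(2+\delta)$-moment Lyapunov bound with $\delta \le 1/2$ rather than a fourth-moment bound, precisely so that the fifth-moment assumption \ref{A1} suffices; a $\max_j \E|W_{n,j}|^4 \to 0$ argument would need more than $\E|x_{ij}|^5 < \infty$.

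The genuine gap is in your treatment of the conditional covariance. You claim that one approximates each $Y_{n,j}(z,q)$, modulo $o_\PR(n^{-1/2})$ errors, by a quadratic form against the deterministic matrix obtained from the equivalents $-\su(z)\big(\bfI + \su(z)\bfSigma_n\big)^{-1}$ and its $q$-deleted analogue. If that were true, the limiting variance would reduce to $\frac{1}{z_1 z_2}\tr\bfSigma\mathbf{H}_{q}^\Delta(z_1)\bfSigma\mathbf{H}_{q}^\Delta(z_2)$ alone, and the resulting $\sigma^2$ would be wrong. In fact the resolvent differences cannot be replaced by their deterministic equivalents at that precision: the paper instead uses the decomposition \eqref{decomp} of $\D_j^{-1}(z) - \tilde\D_{j(q)}^{-}(z)$ into a deterministic part plus $\mathbf{X}_j$, $\mathbf{Y}_j$, $\mathbf{Z}_j$, shows $\mathbf{Y}_j$ and $\mathbf{Z}_j$ are negligible, but then must carry out a full self-consistent-equation analysis of the $\mathbf{X}_j$ contribution (the terms $X_{1j}, X_{2j}, X_{3j}$ in Step 4). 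It is exactly this surviving $\mathbf{X}_j$ term, fed back recursively, that generates the factor $\big(1 - a(z_1,z_2)\big)^{-1}$ in the limiting variance \eqref{def_sigma}, where $a(z_1,z_2)$ is the kernel defined in \eqref{def_a}. Your proposal, as written, omits this entirely and would produce the numerator of $\sigma^2$ without the denominator. So the ``bookkeeping'' you gesture at in the final paragraph is not a routine tracking of lower-order corrections; it is the core computation, and it is not implied by replacing resolvents with deterministic equivalents.
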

Next, we define the process 
$\hat{M}_{n,q}^{(1)}$ in the same way  as $\hat{M}_{n,q}$  in \eqref{def_hat_m}
replacing $M_{n,q}$ by $M_{n,q}^{(1)}$ 
and show in Section \ref{sec_tight}
the following  tightness 
result.	
 		\begin{theorem} \label{thm_tight}
		Under the assumptions of Theorem \ref{thm_lss}, the sequence $(\hat{M}_{n,q}^{(1)})_{n\in\N}$ is tight in the space $ \mathcal{C} (\mathcal{C}^+)$.
		\end{theorem}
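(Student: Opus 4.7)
The plan is to verify the standard Kolmogorov-type criterion for tightness of stochastic processes in $\mathcal{C}(\mathcal{C}^+)$ in the spirit of Section 12.3 of \cite{bai2004}: combined with tightness at a single point (which follows from Theorem \ref{thm_fidis}), it suffices to establish a uniform modulus-of-continuity estimate of the form
\begin{align*}
\sup_{n\in\N} \sup_{\substack{z_1,z_2\in\mathcal{C}^+ \\ z_1 \neq z_2}} \frac{\E \bigl| \hat{M}_{n,q}^{(1)}(z_1) - \hat{M}_{n,q}^{(1)}(z_2) \bigr|^2}{|z_1-z_2|^2} \leq K.
\end{align*}
Since $\hat{M}_{n,q}^{(1)}$ is constant on the two short vertical segments where $v\in[0, n^{-3/2}\varepsilon_n]$ by construction, it is enough to verify this bound over the non-trivial portion $\mathcal{C}_n$ on which $\hat{M}_{n,q}^{(1)}$ coincides with $M_{n,q}^{(1)}$.

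The core ingredient will be a uniform second-moment bound $\sup_{n,z\in\mathcal{C}_n} \E |M_{n,q}^{(1)}(z)|^2 \leq K$, which I would obtain by the martingale decomposition standard in this area. Letting $\E_j$ denote conditional expectation given the first $j$ columns of $\bfX_n$, write
\begin{align*}
M_{n,q}^{(1)}(z) = \sqrt{n} \sum_{j=1}^n (\E_j - \E_{j-1}) \bigl[ \tr (\hat\bfSigma_n - zI)^{-1} - \tr (\hat\bfSigma_n^{(-q)} - z I)^{-1}\bigr]
\end{align*}
and apply Burkholder's inequality. Each martingale difference is then expanded by applying Sherman--Morrison to both resolvents simultaneously, and estimated via concentration inequalities for quadratic forms that exploit the fifth-moment assumption \ref{A1}. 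The Lipschitz dependence on $z$ is then obtained through the resolvent identity $(\hat\bfSigma_n - z_1 I)^{-1} - (\hat\bfSigma_n - z_2 I)^{-1} = (z_1-z_2)(\hat\bfSigma_n - z_1 I)^{-1}(\hat\bfSigma_n - z_2 I)^{-1}$, so that the moment estimate for $|M_{n,q}^{(1)}(z_1) - M_{n,q}^{(1)}(z_2)|^2$ reduces to the same kind of resolvent moment bounds, divided by $|z_1-z_2|^{-2}$.

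The hard part will be producing a second-moment bound of order $n^{-2}$ on each individual martingale difference, so that the sum-of-squares yields an $O(n^{-1})$ bound that compensates for the $\sqrt{n}$ scaling in \eqref{def_mn1}. A naive estimate produces only an $O(n^{-1})$ bound per term, which is sufficient for the standard linear spectral statistic of $\hat\bfSigma_n$ alone but catastrophically too large by a factor of $n$ in the present difference setting --- this is the technical manifestation of the upscaling effect flagged in the introduction. The required sharper rate must come from genuine cancellation between the two Stieltjes transforms: after the double Sherman--Morrison expansion, the $O(n^{-1/2})$ leading contributions of each resolvent coincide and cancel in the difference, so that what survives is governed by higher-order residuals that can be estimated using the concentration bounds. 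Keeping these residuals under control uniformly on $\mathcal{C}_n$ is delicate because the resolvent norms grow like $v^{-1}$ as $z$ approaches the real axis; this is exactly the reason for replacing $n^{-1}$ by $n^{-3/2}$ in the definition of $\mathcal{C}_l, \mathcal{C}_r$ in \eqref{def_hat_m}, as noted in Remark \ref{rem1}. Balancing the cancellation gains against the $v^{-1}$ losses, with $v \geq n^{-3/2}\varepsilon_n$, will yield the required uniform bound and hence tightness.
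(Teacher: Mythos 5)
Your overall strategy matches the paper's: verify Billingsley's tightness criterion (the paper cites Theorem 12.3 of Billingsley (1968), not a section of \cite{bai2004}) by combining finite-dimensional tightness from Theorem \ref{thm_fidis} with a uniform second-moment modulus-of-continuity bound, and obtain the latter via a martingale decomposition of $M_{n,q}^{(1)}(z_1)-M_{n,q}^{(1)}(z_2)$ together with the resolvent identity, with the fifth moment entering through concentration inequalities for quadratic forms. You also correctly diagnose the scaling issue: the per-martingale-difference second moment must be $O(n^{-2})$, one order sharper than for an individual linear spectral statistic, and the factor $n^{-3/2}$ in the definition of $\mathcal{C}_l, \mathcal{C}_r$ is what keeps the argument alive near the real axis.

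The gap is in your account of where the sharper rate comes from. It is not that ``the $O(n^{-1/2})$ leading contributions of each resolvent coincide and cancel,'' leaving only higher-order residuals. After Sherman--Morrison the leading martingale differences become quadratic forms in the resolvent \emph{difference} itself, of the form $n^{-1}\bigl(\bfx_j^\star\bfB_{qj}\bfx_j-\tr\bfB_{qj}\bigr)$ with $\bfB_{qj}(z)=\bfSigma^{1/2}\bigl(\D_j^{-1}(z)-\tilde\D_{j(q)}^{-}(z)\bigr)\bfSigma^{1/2}$, and these do not vanish — they carry the entire signal. What makes the $O(n^{-2})$ rate go through is a structural fact you never name: since $\hat\bfSigma_n^{(-q)}$ differs from $\hat\bfSigma_n$ only by deleting the $q$-th row and column, the difference $\D_j^{-1}(z)-\tilde\D_{j(q)}^{-}(z)$ is effectively a rank-$\le 2$ perturbation and therefore $\E\bigl(\tr\bfB_{qj}\bfB_{qj}^\star\bigr)^\alpha\lesssim 1$ uniformly (Lemma \ref{lem_tr_B}), rather than the naive $O(p^\alpha)$. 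Feeding this trace-class estimate into the $\tr(\bfA\bfA^\star)$ branch of \eqref{bound_quad_form}, and into its iterated form Lemma \ref{h1a}, is what delivers the $O(n^{-2})$ per-term bound; one also needs $|b_j(z_1)b_j(z_2)-b_{j(q)}(z_1)b_{j(q)}(z_2)|\lesssim n^{-1}$, which again follows from the same low-rank structure. Without a version of Lemma \ref{lem_tr_B}, operator-norm bounds on $\bfB_{qj}$ alone would only yield $O(n^{-1})$ per term and the argument would not close.
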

 	The third step is an investigation of 
  the deterministic part. In particular, we show in Section \ref{sec_bias} that the bias   $(M_{n,q}^{(2)} )_{n\in\N}$ converges uniformly to zero. 
  	\begin{theorem} \label{thm_bias}
	Under the assumptions of Theorem \ref{thm_lss}, it holds 
	\begin{align*}
	   \lim\limits_{n\to\infty }
	   \sup\limits_{\substack{z\in\mathcal{C}_n }} \left| M_{n,q}^{(2)}(z) 
	    \right| 
	   = 0.
	\end{align*}
	\end{theorem}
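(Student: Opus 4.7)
The strategy is to expand each of the two ``standalone'' biases
\[
b_{n}(z) := \E\bigl[ p\, s_{F^{\hat\bfSigma_n}}(z) \bigr] - p\, s_{F^{y_n,H_n}}(z),
\qquad
b_{n}^{(-q)}(z) := \E\bigl[ (p{-}1)\, s_{F^{\hat\bfSigma_n^{(-q)}}}(z) \bigr] - (p{-}1)\, s_{F^{(p-1)/n,H_{nq}}}(z),
\]
with error $o(n^{-1/2})$, and then to establish that their leading deterministic parts cancel at the same order in the difference $b_n(z)-b_n^{(-q)}(z)$. Since $M_{n,q}^{(2)}(z) = \sqrt{n}\,(b_n(z)-b_n^{(-q)}(z))$, this yields the pointwise claim; uniformity over $\mathcal{C}_n$ is then obtained by a standard $n^{-C}$-net covering argument, using that $b_n$ is Lipschitz on $\mathcal{C}_n$ with constants polynomial in $n$.

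For the individual expansions I would follow the resolvent/martingale approach of \cite{bai2004}: decompose $\hat\bfSigma_n$ into rank-one contributions from the columns of $\bfSigma_n^{1/2}\bfX_n$, apply the Sherman--Morrison identity, and bound the resulting quadratic forms $\rd^\star \bfQ \rd - n^{-1}\tr \bfQ$ using the sharper concentration inequalities that the fifth-moment hypothesis \ref{A1} together with the truncation $|x_{ij}|\le \eta_n\sqrt{n}$ make available. Repeated use of \eqref{fund_eq}--\eqref{repl_a47} should yield an expression $b_n(z)=A_n(z)+o(n^{-1/2})$, where $A_n(z)$ is an explicit functional of $\bfSigma_n,\, y_n,\, \su(z)$ and the fourth cumulant $\nu_4-\kappa-1$, plus certain Hadamard-type traces of $\bfSigma_n$; in the classical Bai--Silverstein CLT these Hadamard pieces are precisely the terms that require the additional structural assumption (1.18) of Pan--Zhou to admit a limit. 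The same expansion applied to $\hat\bfSigma_n^{(-q)}$ produces $b_n^{(-q)}(z) = A_n^{(-q)}(z)+o(n^{-1/2})$, with $A_n^{(-q)}$ of the same functional shape but with $\bfSigma_n,\, p,\, y_n,\, H_n$ replaced by $\bfSigma_n^{(-q)},\, p-1,\, (p-1)/n,\, H_{nq}$.

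The core of the argument is then to show $A_n(z)-A_n^{(-q)}(z) = o(n^{-1/2})$. The key structural observation is that $\bfSigma_n$ and the zero-padded matrix $\widetilde{\bfSigma_n^{(-q)}}^{(-q)}$ differ by a matrix of rank at most two, so that traces of the form $\tr\bigl\{ \bfSigma_n\,(\bfI+\su(z)\bfSigma_n)^{-1}\cdots\bigr\}$ appearing in $A_n$ differ from their $\bfSigma_n^{(-q)}$-analogues by $O(1)$; after normalization these differences become $O(n^{-1})=o(n^{-1/2})$. The same rank-two perturbation principle, combined with $\|\bfSigma_n\|$-boundedness from \ref{A2}, controls the Hadamard-type pieces, so that the potentially nonconvergent eigenvector-dependent portions of $A_n$ and $A_n^{(-q)}$ cancel in the difference -- this is precisely why no analogue of (1.18) of Pan--Zhou is needed here. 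Elementary discrepancies such as $y_n$ versus $(p-1)/n$ and $s_{F^{y_n,H_n}}$ versus $s_{F^{(p-1)/n,H_{nq}}}$ are of order $O(n^{-1})$ uniformly on $\mathcal{C}_n$ and are absorbed in the same way.

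The main obstacle will be to ensure that the remainder terms are genuinely $o(n^{-1/2})$ rather than merely $O(n^{-1/2})$, since the prefactor $\sqrt{n}$ leaves no room to spare; this is why the classical fourth-moment Bai--Silverstein error estimates must be sharpened, a role played here jointly by \ref{A1} and the refined contour cutoff $n^{-3/2}\varepsilon_n$ in Remark~\ref{rem1}. A secondary technical issue is uniformity in $z$ along the vertical sides $\mathcal{C}_l,\mathcal{C}_r$, where the resolvent operator norm can be as large as $n^{3/2}/\varepsilon_n$; this is managed by balancing these resolvent bounds against the sharpened quadratic-form estimates on a polynomial net of $\mathcal{C}_n$ and then invoking the Lipschitz bound on $b_n$.
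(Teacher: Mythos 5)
Your overall plan---compare the two biases, argue that the leading terms cancel, and control the remainders---is the right intuition, but the decomposition you propose is genuinely different from the paper's and has a gap that I do not see how to close with the tools you list.

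The paper does \emph{not} expand each bias $b_n(z)$ to an explicit deterministic $A_n(z)$ with $o(n^{-1/2})$ error. Instead it uses the \emph{exact} algebraic self-consistent identity
$\E[\su_n]-\su_n^0 = R_n\,\E[\su_n]\,\su_n^0/(1-I_n\E[\su_n])$
(with $R_n$, $I_n$ defined via the defining equation \eqref{repl_a47}), and the analogue for the $(-q)$ version; it then algebraically regroups the difference so that each term is a product of an $O(1)$ factor such as $n(\E[\su_{nq}]-\su_{nq}^0)$ or $nR_{nq}\E[\su_{nq}]$ with a factor such as $\sqrt{n}(\su_n^0-\su_{nq}^0)$, $\sqrt{n}(I_n-I_{nq})$ or $n^{3/2}(R_n\E[\su_n]-R_{nq}\E[\su_{nq}])$, each of which is shown to be $o(1)$. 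Crucially, the factors $n(\E[\su_{nq}]-\su_{nq}^0)$ and $nR_{nq}\E[\su_{nq}]$ only need to be \emph{bounded}, not expanded.

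Your route requires a much stronger intermediate claim, namely that each standalone bias $n(\E[\su_n]-\su_n^0)$ equals an explicit deterministic functional $A_n(z)$ up to $o(n^{-1/2})$. This is not available in \cite{bai2004} or \cite{panzhou2008} (which only give $o(1)$ errors), and it is not a consequence of the fifth-moment/truncation estimates as you assert; showing it would amount to a new second-order bias expansion for sample covariance matrices, which is a significant undertaking in its own right. Moreover, the finite-$n$ quantity $nR_n\E[\su_n]$ is itself an expectation over the random resolvents, not a closed-form functional of $\bfSigma_n$ and $\su(z)$; so even the \emph{form} of an explicit $A_n$ with $o(n^{-1/2})$ error is unclear. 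The paper sidesteps this entirely by pairing the $j$-th martingale summand of $R_n$ against the $j$-th summand of $R_{nq}$ and exploiting that the trace of $\D_j^{-1}(z)-\tilde\D_{j(q)}^-(z)$ (and related Hadamard contractions) is $O(1)$ rather than $O(p)$ (Lemma \ref{lem_tr_B}); the cancellation happens term by term inside the $j$-sum, without ever isolating the limit of a single $R_n$. Your ``rank-two perturbation'' observation is correct and does underlie the paper's estimates $\sqrt{n}(\su_n^0-\su_{nq}^0)=o(1)$ and $\sqrt{n}(I_n-I_{nq})=o(1)$, but by itself it does not touch the random-resolvent part of $R_n$, which is the hard piece. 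Finally, the paper does not use a net argument for uniformity; it proves the estimates directly for $z\in\mathcal{C}_n$ (with the $n^{-3/2}\varepsilon_n$ cutoff of Remark~\ref{rem1}), so that part of your proposal, while not wrong in spirit, also diverges from what is actually done.
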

	The assertion of Theorem \ref{thm_stieltjes} follows from Theorem \ref{thm_fidis}, \ref{thm_tight} and \ref{thm_bias}. 
 
\subsection{Preliminaries for the proofs}
For a $p\times p $ matrix $\mathbf{A}$, we define $\mathbf{A}^{(-q,\cdot)}$ as the $(p-1)\times p$ submatrix of $\mathbf{A}$, where the $q$th row is deleted. Similarly, we set $\mathbf{A}^{(\cdot, -q)}$ as the $p\times (p-1)$ submatrix of $\mathbf{A}$, where the $q$th column is deleted.
Furthermore, $\mathbf{A}^{(q,q)}$ contains only the $q$th row and $q$th column of $\mathbf{A}$ is elsewhere filled with zeros. Similarly, $\mathbf{A}^{(q,\cdot)}$ (or $\mathbf{A}^{(\cdot,q)}$) contains only the $q$th row (or column) of $\mathbf{A}$, and is elsewhere filled with zeros. 
Moreover, recall that if $\mathbf{B}$ is a $(p-1)\times (p-1)$ matrix, then  $\tilde{\mathbf{B}}^{(-q)}$ denotes the $p\times p$ matrix which is generated from $\mathbf{B}$ by inserting an additional column and row at position $q$ filled with zeros. Whenever it is clear from the context, the dependency on $q$ is omitted in the  notation. For example, we write $\tilde{\D}_{j(q)}$ instead of $\tilde{\D}_{j(q)}^{(-q)}$, where the matrix $\D_{j(q)}$ is defined  below.  If the $ (p-1) \times (p-1)$  matrix $\mathbf{B}$ is nonsingular, then we define $$\tilde{\mathbf{B}}^-:= \widetilde{\lb  \mathbf{B}\inv \rb }^{(-q)}
$$
as the matrix which obtained from $\mathbf{B}^{-1}$ by inserting 
an additional column and row  with zeros at position $q$.
For $j=1 \ldots , n$,
	let  $\E_{j}$ denote the conditional expectation 
	with respect to the  filtration $\mathcal{F}_{nj}=\sigma( \{ \mathbf{x}_{1},...,\mathbf{x}_{j} \} )$
	(by $\E_{0}$ we denote the common expectation). 
 Moreover, for the sake of simple notation,  we write  $\bfSigma$  and ${\hat	\bfSigma^{(-q)} } $ for the matrices  $\bfSigma_n$  and  ${\hat	\bfSigma^{(-q)}_n }$ in the proofs.
Furthermore, we define for $1 \leq j \leq n, 1 \leq q \leq p$ the following quantities
\begin{align}
		\rd_j &=
  \frac{1}{\sqrt{n}} \bfSigma^{1/2} \bfx_j , \nonumber \quad 
	\rd_{jq}  = \frac{1}{\sqrt{n}} \lb \bfSigma^{1/2}\rb^{(-q, \cdot)} \bfx_j,  \\
{\hat 	\bfSigma} & = \sum\limits_{j=1}^n \rd_j \rd_j^\star, \quad 
{\hat	\bfSigma^{(-q)} } = \sum\limits_{j=1}^n \rd_{jq} \rd_{jq}^\star, \nonumber \\ 
	\D(z) & =  \hat\bfSigma - z\bfI_p,  \quad \D_{(q)}(z)  = \hat\bfSigma^{(-q)} - z\bfI_{p-1}, \nonumber \\	
	\D_j(z)  & =  \hat\bfSigma - z\bfI_p  - \rd_{j} \rd_{j}^\star, \quad 
	\D_{j(q)}(z) =  \hat\bfSigma^{(-q)}  - z\bfI_{p-1} - \rd_{jq} \rd_{jq}^\star, \nonumber \\
	\alpha_{j} (z) &= \mathbf{r}_{j}^\star \mathbf{D}_{j}^{-2} (z) \mathbf{r}_{j} 
		- n^{-1} \operatorname{tr} ( \mathbf{D}_{j}^{-2} (z) \bfSigma ),  \quad
		\alpha_{j(q)} (z) = \mathbf{r}_{jq}^\star \mathbf{D}_{j(q)}^{-2} (z) \mathbf{r}_{jq} 
		- n^{-1} \operatorname{tr} ( \mathbf{D}_{j(q)}^{-2} (z) \bfSigma^{(-q)} ),  \nonumber \\ 
	\beta_j(z) &= \frac{1}{1 + \rd_j^\star \D_j\inv(z) \rd_j }, \quad
	\beta_{j(q)} (z)  = \frac{1}{1 + \rd_{jq}^\star \D_{j(q)}\inv(z) \rd_{jq} }, \nonumber \\
		\overline{\beta}_{j} (z) &= \frac{1}{1+ n^{-1} \operatorname{tr}(\bfSigma \mathbf{D}_{j} \inv (z) ) } , \quad 
		\overline{\beta}_{j(q)} (z) = \frac{1}{1+ n^{-1} \operatorname{tr}(\bfSigma^{(-q)} \mathbf{D}_{j(q)} \inv (z) ) } ,\nonumber \\
    b_j(z) & = \frac{1}{1+ n^{-1} \E \left[ \operatorname{tr}(\bfSigma \mathbf{D}_{j} \inv (z) ) \right] } , \quad
    b_{j(q)} (z) = \frac{1}{1+ n^{-1} \E \left[ \operatorname{tr}(\bfSigma^{(-q)} \mathbf{D}_{j(q)} \inv (z) )  \right] }, \nonumber \\ 
	\bfA_{qj}(z) & = \bfSigma^{1/2} \D_j^{-2}(z) \bfSigma^{1/2} 
	- \lb \bfSigma^{1/2} \rb^{(\cdot,-q)} \D_{j(q)}^{-2}(z) \lb \bfSigma^{1/2} \rb^{(-q,\cdot)}, \nonumber \\
	\bfB_{qj}(z) &  = \bfSigma^{1/2} \D_j\inv(z) \bfSigma^{1/2} - \lb \bfSigma^{1/2} \rb^{(\cdot,-q)} \D_{j(q)}^{-1}(z) \lb \bfSigma^{1/2} \rb^{(-q,\cdot)} 
 \nonumber \\  & 
 =  \bfSigma^{1/2} \lb    \D_j^{-1}(z_1) - \tilde\D_{j(q)}^{-}(z_1)  \rb \bfSigma^{1/2}, \label{formula_Bq}  \\
	\hat{\gamma}_j (z) & = \rd_j^\star \D_j\inv(z) \rd_j - n\inv \tr \bfSigma \D_j\inv(z), \quad 
	\hat{\gamma}_{j(q)} (z)   = \rd_{jq}^\star \D_{j(q)}\inv(z) \rd_{jq} - n\inv \tr \bfSigma^{(-q)} \D_{j(q)}\inv(z). \nonumber 
\end{align}
 Similarly to the arguments given on page 81 in 
 \cite{diss}, we have for  any  $\alpha \geq 2$ and any matrix $\bfA \in \mathbb{C}^{p\times p}$
\begin{align} \label{bound_quad_form}
	\E \left| \frac{1}{n} \lb \bfx_j^\star \bfA \bfx_j - \tr \bfA \rb 
	\right|^{\alpha} 
	\lesssim 
	\begin{cases}
		\lb \tr \bfA \bfA^\star \rb^{\alpha/2}
		n^{- (2.5 \wedge \alpha) } 
	\eta_n^{( 2\alpha - 5) \vee 0}  \\
	|| \bfA ||^\alpha n^{- (1.5 \wedge \alpha /2) } \eta_n^{( 2\alpha - 5) \vee 0}.
	\end{cases} 
\end{align}

\subsection{Proof of Theorem \ref{thm_fidis} (finite-dimensional distributions of $M_{n,q}^{(1)}$)} \label{sec_fidis}

The proof is divided in several parts. For the sake of simplicity, we will first concentrate on the case $q_1 = q_2 =q.$
\paragraph*{Step 1: CLT for martingale difference schemes}
We aim to show that
	\begin{align} \label{aim_fidis}
		\sum\limits_{i=1}^k  \alpha_{i} M_{n,q}^{(1)}(z_i)  \cond 
		\sum\limits_{i=1}^k  \alpha_{i} M_q(z_i) 
	\end{align}
	for all $\alpha_{1} , \ldots  , \alpha_{k} \in \mathbb{C}$, $k\in\N$,
where $M_q$ is the Gaussian process defined in Theorem \ref{thm_stieltjes}.
Using
\begin{align}
	\D\inv(z) = \D_j\inv(z) - \beta_j(z) \D_j\inv(z) \rd_j \rd_j^\star \D_j\inv(z) 
	\label{eq_sher_mor}
\end{align}
and an analogue formula for $\D_{(q)}\inv(z)$,
we note that
\begin{align*}
	M_{n,q}^{(1)}(z) 
	& = \sqrt{n} \sum\limits_{j=1}^n ( \E_j - \E_{j- 1} ) \left[  \tr \D\inv(z) - \tr \D_{(q)}\inv(z) \right] \\
	& = - \sqrt{n} \sum\limits_{j=1}^n ( \E_j - \E_{j- 1} ) \left[ 
	\beta_j(z) \rd_j^\star \D_j^{-2}(z) \rd_j 
	- \beta_{j(q)}(z) \rd_{jq}^\star \D_{j(q)}^{-2}(z) \rd_{jq} 
	\right].
\end{align*}
The identity
	 $	\beta_{j}(z) = \overline{\beta}_{j}(z) 
		- \overline{\beta}_{j}^2(z) \hat{\gamma}_{j}(z) 
		+\overline{\beta}_{j}^2(z) \beta_{j}(z) \hat{\gamma}_{j}^2(z),
  $
and an analog identity  for $\beta_{j(q)}(z)$ yield the decompositions 
	\begin{align*}
		(\E_{j} - \E_{j - 1} ) & \beta_{j}(z) \mathbf{r}_{j}^\star \mathbf{D}_{j}^{-2}(z) \mathbf{r}_{j} \\
		& = \E_{j} \lb \overline{\beta}_{j}(z) \alpha_{j}(z) - \overline{\beta}_{j}^2(z)\hat{\gamma}_{j}(z) \frac{1}{n} 
		\tr (\bfSigma \mathbf{D}_{j}^{-2} (z) ) \rb \\
		& - (\E_{j} - \E_{j-1} ) \overline{\beta}_{j}^2(z) \lb \hat{\gamma}_{j}(z) \alpha_{j}(z) - \beta_{j}(z) \mathbf{r}_{j}^\star 
		\mathbf{D}_{j}^{-2}(z) \mathbf{r}_{j} \hat{\gamma}_{j}^2(z) \rb~,  \\
		(\E_{j} - \E_{j - 1} ) & \beta_{j(q)}(z) \mathbf{r}_{jq}^\star \mathbf{D}_{j(q)}^{-2}(z) \mathbf{r}_{jq} \\
		& = \E_{j} \lb \overline{\beta}_{j(q)}(z) \alpha_{j(q)}(z) - \overline{\beta}_{j(q)}^2(z)\hat{\gamma}_{j(q)}(z) \frac{1}{n} 
		\tr (\bfSigma^{(-q)} \mathbf{D}_{j(q)}^{-2} (z) ) \rb \\
		& - (\E_{j} - \E_{j-1} ) \overline{\beta}_{j(q)}^2(z) \lb \hat{\gamma}_{j(q)}(z) \alpha_{j(q)}(z) - \beta_{j(q)}(z) \mathbf{r}_{j}^\star 
		\mathbf{D}_{j(q)}^{-2}(z) \mathbf{r}_{j} \hat{\gamma}_{j(q)}^2(z) \rb.
	\end{align*}
	By an application of Lemma \ref{lem_bound_gamma_alpha}, we obtain 
	\begin{align*}
		M_{nq}^{(1)} (z) & = 
  \sum\limits_{j=1}^n Y_{jq} (z)
		 + o_{\PR} (1) ,
	\end{align*}
 where the terms in the sum are defined by  
 \begin{align}
     Y_{jq} (z) & =  - \sqrt{n}  \E_j \Bigg[ 
		 \overline{\beta}_{j}(z) \alpha_{j}(z) - \overline{\beta}_{j}^2(z)\hat{\gamma}_{j}(z) \frac{1}{n} 
		\tr (\bfSigma \mathbf{D}_{j}^{-2} (z) )
		 \nonumber \\ & -  \Big ( \overline{\beta}_{j(q)}(z) \alpha_{j(q)}(z) - 
   \overline{\beta}_{j(q)}^2(z)\hat{\gamma}_{j(q)}(z) \frac{1}{n} 
		\tr (\bfSigma^{(-q)} \mathbf{D}_{j(q)}^{-2} (z) )  \Big )
		 \Bigg] 
		 \nonumber \\& 
		= - \sqrt{n} \E_{j} \left[ \frac{\partial}{\partial z} \lb \overline{\beta}_{j}(z) \hat{\gamma}_{j}(z) - \overline{\beta}_{j(q)}(z) \hat{\gamma}_{j(q)}(z) \rb \right] .
				\label{deriv}
	\end{align}
  Thus, it is sufficient to prove asymptotic normality for
	the quantity 
\begin{equation*} 
		\sum\limits_{j=1}^{n }
		Z_{njq},
	\end{equation*}
	where  $ Z_{njq} = \sum_{i=1}^k  \alpha_{i} Y_{jq}(z_i) $ for $1 \leq j \leq n$.
	For this purpose we verify conditions (5.29) - (5.31)  of  the central limit theorem for complex-valued martingale difference schemes given in Lemma 5.6 of  \cite{najimyao2016}. 
	It is straightforward to show that for each $n\in\N$, 
$		(Z_{njq})_{1 \leq j \leq n}$
	forms
	a martingale difference scheme with respect to
	the filtration $(\mathcal{F}_{nj})_{1 \leq j \leq n},$ where $\mathcal{F}_{nj}$ denotes the $\sigma$-field generated by the random vectors $\bfx_1, \ldots, \bfx_j$. 
	We have for $0 < \delta \leq 1/2$
	\begin{align}
		\E | Y_{jq}(z) |^{2+\delta} & \leq 
		n^{1+ \delta /2} \Big\{ \E \left|  \overline{\beta}_{j}(z) \alpha_{j}(z) -  \overline{\beta}_{j(q)}(z) \alpha_{j(q)}(z) \right|^{2+ \delta} \label{bound_yj}  \\ & 
		+ \E \left| \overline{\beta}_{j}^2(z)\hat{\gamma}_{j}(z) \frac{1}{n} 
		\tr (\bfSigma \mathbf{D}_{j}^{-2} (z) )
		- \overline{\beta}_{j(q)}^2(z)\hat{\gamma}_{j(q)}(z) \frac{1}{n} 
		\tr (\bfSigma^{(-q)} \mathbf{D}_{j(q)}^{-2} (z) )  \right|^{2+\delta} \Big\} 
		= o\lb n\inv \rb, \nonumber 
	\end{align}
	where we used Lemma \ref{lem_bound_2+delta} for the first summand and the second one can be handled similarly.  
	This implies the Lindeberg-type condition (5.31) given in \cite{najimyao2016}, namely
		\begin{align*}
			\sum\limits_{j=1}^{n} & \E \lb \left| Z_{njq} \right|^2 
			I \lb \left| Z_{njq} \right| > \varepsilon \rb \rb 
			\leq \frac{1}{\varepsilon^2} \sum\limits_{j=1}^{n} 
			\E \left| Z_{njq} \right|^{ 2 + \delta}  
			 = \frac{1}{\varepsilon^2} \sum\limits_{j=1}^{n} 
			\E \left| \sum\limits_{i=1}^k \alpha_{i} Y_{jq}(z_i)  \right|^{2+ \delta} = o(1), 
		\end{align*}
		as $n\to\infty$. 
		
		For a proof of condition (5.30), we note that
	\begin{align*}
		\sum\limits_{j=1}^{n} \E_{j-1} \left[ Z_{njq}^2\right]
		= & \sum\limits_{i,l=1}^{k} \sum\limits_{j=1}^{n}  \alpha_{i} \alpha_{l} \E_{j-1} [ Y_{jq}(z_i) Y_{jq} (z_l) ] 
	\end{align*}
	
	As all summands have the same form,  it is sufficient to show that   for all $z_1, z_2\in\mathbb{C}$ with $\textnormal{Im}(z_1), \textnormal{Im}(z_2) \neq 0$ 
	\begin{align} \label{a10}
		V_n(z_1,z_2) = \sum\limits_{j=1}^{n} \E_{j-1} \left[ Y_{jq} (z_1) Y_{jq} (z_2) \right] 
	~~	\conp ~ \kappa \sigma^2(z_1,{z_2}, q, q) + (\nu_4 - \kappa - 1)  \tau^2(z_1,{z_2}, q, q)
	\end{align}
for appropriate functions 
$\sigma^2(z_1,z_2, q, q)$ and $ \tau^2(z_1,{z_2}, q, q)$.  Note that this convergence implies condition (5.29) in \cite{najimyao2016}, since 	
	\begin{align*}
		\sum\limits_{j=1}^{n} \E_{j-1} \big[ Y_{jq} (z_1) \overline{Y_{jq} (z_2)} \big] 
		= \sum\limits_{j=1}^{n} \E_{j-1} \left[ Y_{jq} (z_1) Y_{jq} (\overline{z_2})\right] \conp ~\kappa \sigma^2(z_1,\overline{z_2}, q, q)+ (\nu_4 - \kappa - 1)  \tau^2(z_1,\overline{z_2}, q, q).
	\end{align*}
 Consequently, 	Lemma 5.6 in \cite{najimyao2016} combined with the Cramer–Wold device 
yields the weak convergence of the finite-dimensional distributions to a multivariate normal distribution 
with covariance $\kappa \sigma^2(z_1,\overline{z_2},q,q) + (\nu_4 - \kappa - 1)  \tau^2(z_1,\overline{z_2}, q, q) = \cov (M_q^{(1)}(z_1,t_1),M_q^{(1)}(z_2,t_2)) $.
	
	\paragraph*{Step 2: Calculation of the variance}
	Consider the sum
	\begin{align} \label{sum}
		V_n^{(0)} (z_1,z_2) = n \sum\limits_{j=1}^{n} \E_{j-1} \left[ \E_j \lb \overline{\beta}_{j}(z_1) \hat{\gamma}_{j}(z_1) - \overline{\beta}_{j(q)}(z_1) \hat{\gamma}_{j(q)}(z_1) \rb
		\E_j \lb \overline{\beta}_{j}(z_2) \hat{\gamma}_{j}(z_2) - \overline{\beta}_{j(q)}(z_2) \hat{\gamma}_{j(q)}(z_2) \rb \right].
	\end{align}
	We use the dominated convergence theorem in combination with \eqref{deriv} to get
	\begin{align} \label{diff}
		\frac{\partial^2}{\partial z_1 \partial z_2} V_n^{(0)} (z_1, z_2) = V_n(z_1, z_2).
	\end{align}
		Similarly to \cite{bai2004}, it can be shown that it suffices to show that $V_n^{(0)}(z_1, z_2) $ given in \eqref{sum} converges in probability to a constant and in this case, the mixed partial derivative of its limit will give the limit of $V_n(z_1, z_2)$. 
		Using \eqref{bound_quad_form} and \cite[p.274]{bai2004}, we see that
		\begin{align*}
		& n \E  \Big| \E_{j-1} \left[ \E_j \lb \overline{\beta}_{j}(z_1) \hat{\gamma}_{j}(z_1) - \overline{\beta}_{j(q)}(z_1) \hat{\gamma}_{j(q)}(z_1) \rb
		\E_j \lb \overline{\beta}_{j}(z_2) \hat{\gamma}_{j}(z_2) - \overline{\beta}_{j(q)}(z_2) \hat{\gamma}_{j(q)}(z_2) \rb \right] \\
		 & - \E_{j-1} \left[ \E_j b_j(z_1) \lb  \hat{\gamma}_{j}(z_1) -  \hat{\gamma}_{j(q)}(z_1) \rb
		\E_j b_j(z_2)  \lb \hat{\gamma}_{j}(z_2) - \hat{\gamma}_{j(q)}(z_2) \rb \right] \Big|\\
		 = & n \E \Big| \E_{j-1} \left[ \E_j \lb \lb  \overline{\beta}_{j}(z_1) - b_{j}(z_1) \rb \hat{\gamma}_{j}(z_1)  -   \lb  \overline{\beta}_{j(q)}(z_1) - b_{j}(z_1) \rb \hat{\gamma}_{j(q)}(z_1) \rb 
		 \E_j \lb \overline{\beta}_{j}(z_2) \hat{\gamma}_{j}(z_2) - \overline{\beta}_{j(q)}(z_2) \hat{\gamma}_{j(q)}(z_2) \rb  \right] \\
		& + \E_{j-1} \left[ \E_j  b_j(z_1) \lb  \hat{\gamma}_{j}(z_1) -  \hat{\gamma}_{j(q)}(z_1) \rb
		 \E_j  \lb \lb  \overline{\beta}_{j}(z_2) - b_{j}(z_2) \rb \hat{\gamma}_{j}(z_2)
		 - \lb  \overline{\beta}_{j(q)}(z_2) - b_{j}(z_2) \rb \hat{\gamma}_{j(q)}(z_2) \rb  \right]
		 \Big| \\
	= &  o \lb n^{-1} \rb. 
	\end{align*}
		Consequently, we have 
	\begin{align*}
		V_n^{(0)} (z_1, z_2) = V_n^{(1)} (z_1, z_2) + o_{\PR} (1),
	\end{align*}
	where
	\begin{align*} 
		V_n^{(1)} (z_1, z_2) & = 
 n \sum\limits_{j=1}^{n} b_j(z_1) b_j(z_2) \E_{j-1} \left[ \E_j \left[  \hat{\gamma}_{j}(z_1) - \hat{\gamma}_{j(q)}(z_1) \right]
		\E_j \left[ \hat{\gamma}_{j}(z_2) -\hat{\gamma}_{j(q)}(z_2) \right] \right] \\
		& =  n\inv  \sum\limits_{j=1}^{n} b_j(z_1) b_j(z_2) \E_{j-1} \left[ \E_j \left[  \bfx_j^\star \bfB_{qj}(z_1) \bfx_j - \tr \bfB_{qj}(z_1) \right]
		\E_j \left[\bfx_j^\star \bfB_{qj}(z_2 ) \bfx_j - \tr \bfB_{qj}(z_2) \right] \right] \\
			& =  n\inv  \sum\limits_{j=1}^{n} b_j(z_1) b_j(z_2) \E_{j-1} \left[ \lb \bfx_j^\star \E_j [ \bfB_{qj}(z_1) ] \bfx_j - \tr \E_j [ \bfB_{qj}(z_1) ] \rb  
		\lb \bfx_j^\star \E_j[ \bfB_{qj}(z_2 ) ] \bfx_j - \tr \E_j [ \bfB_{qj}(z_2) ] \rb \right]. \\
	\end{align*}
 Using formula (9.8.6) in \cite{bai2004} we see that  under a Gaussian-type $4$th moment condition ($\nu_4 =3$ for the real case or $\nu_4 = 2$ for the complex case),  we have
	\begin{align*}
	V_n^{(1)}(z_1,z_2) = \kappa V_n^{(2)} (z_1, z_2) + o_{\PR}(1),
	\end{align*}
	where 
 \begin{align*}
		V_n^{(2)} (z_1, z_2) & = \frac{1}{n} \sum\limits_{j=1}^n b_j(z_1) b_j(z_2) \tr \lb \E_j [ \bfB_{qj}(z_1) ] \E_j [ \bfB_{qj}(z_2) ] \rb 
	\end{align*}
 and 
 $\kappa =1$ for the complex case and $\kappa = 2$ for the real case.   
Therefore it suffices to study the limit of 
	$V_n^{(2)} (z_1, z_2)$
 	(in the real case, we have to multiply this term by $2$). 
  	The analysis of  $V_n^{(2)}(z_1, z_2)$ requires a different representation of the differences of resolvents, which is provided in Step 3.
  
  If $\nu_4 \neq 3$ for real case or   $\nu_4 \neq 2$ for the complex case, the additional term 
	\begin{align} \label{def_wn}
		W_n (z_1, z_2) = \frac{v_3 - \kappa - 1}{n} \sum\limits_{j=1}^n  b_j(z_1) b_j(z_2) \tr \lb  \E_j [ \bfB_{qj}(z_1) ] \circ \E_j [ \bfB_{qj}(z_2) ] \rb 
	\end{align}
	has to be considered, which will be analyzed in Step 5 using assumption \ref{A5}.

\paragraph{Step 3: Decomposition of the difference of resolvents}
Similarly to  formula (9.9.12) in \cite{bai2004}, we decompose the difference
\begin{align} \nonumber 
    \D_j\inv(z) - \tilde{\D}_{j(q)}^{-}(z) 
        & =  - \lb z\mathbf{I} - \frac{n-1}{n} b_{j}(z) \bfSigma \rb\inv 
     - \lb z \tilde{\mathbf{I}}^{(-q)} - \frac{n-1}{n} b_{j(q)}(z) \tilde{\bfSigma}^{(-q)}  \rb^{-}
		\\ & +  \mathbf{X}_j(z) + \mathbf{Y}_j(z) + \mathbf{Z}_j(z), \label{decomp}
\end{align}
where 
 	\begin{align*}
 	 	\mathbf{X}_j(z) &=  \sum\limits_{\substack{i=1 \\ i\neq j }}^{n} \Big\{ b_j(z) \lb z\mathbf{I} - \frac{n-1}{n} b_{j}(z) \bfSigma \rb\inv \lb \rd_{i} \rd_{i}^\star - n\inv \bfSigma \rb \D_{ij}\inv(z) \\
 	 	& - b_{j(q)}(z) \lb z \tilde{\mathbf{I}}^{(-q)} - \frac{n-1}{n} b_{j(q)}(z) \tilde{\bfSigma}^{(-q)} \rb^- \lb \rd_{i} \rd_{i}^\star - n\inv \tilde{\bfSigma}^{(-q)} \rb \tilde{\D}_{ij(q)}^-(z)  \Big\},  \\
  	 	\mathbf{Y}_j(z) &= \sum\limits_{\substack{i=1 \\ i\neq j }}^{n} \Big\{ \lb \beta_{ij} (z) - b_{j}(z) \rb \lb z\mathbf{I} - \frac{n-1}{n} b_{j}(z) \bfSigma \rb\inv 
 	 	\rd_{i} \rd_{i}^\star \D_{ij}\inv (z) \\
 	 	& - \lb \beta_{ij(q)} (z) - b_{j(q)}(z) \rb \lb z \tilde{\mathbf{I}}^{(-q)} - \frac{n-1}{n} b_{j(q)}(z) \tilde{\bfSigma}^{(-q)} \rb^-
 	 	\rd_{i} \rd_{i}^\star \tilde{\D}_{ij(q)}^- (z)
 	 	\Big\} , \\
	 	\mathbf{Z}_j(z) &=  n \inv \Big\{ b_{j}(z) \lb z\mathbf{I} - \frac{n-1}{n} b_{j}(z) \bfSigma \rb\inv \bfSigma \sum\limits_{\substack{i=1 \\ i\neq j }}^{n} 
 	 	\lb \D\inv_{ij} (z) - \D\inv_{j} (z) \rb
 	 	\\ & - b_{j(q)}(z) \lb z \tilde{\mathbf{I}}^{(-q)} - \frac{n-1}{n} b_{j(q)}(z) \tilde{\bfSigma}^{(-q)} \rb^- \tilde{\bfSigma}^{(-q)} \sum\limits_{\substack{i=1 \\ i\neq j }}^{n} 
 	 	\lb \tilde{\D}^-_{ij(q)} (z) - \tilde{\D}^-_{j(q)} (z) \rb \Big\} .
 	\end{align*}
Here, the main difference and challenge 
compared to   \cite{bai2004} 
lies in identifying the dominating terms in $V_n^{(2)},$ since the techniques developed in this reference are not directly applicable due to the different normalizations of the difference of spectral statistics compared to a single eigenvalue statistic.
	\paragraph{Step 4: Analysis of $V_n^{(2)}(z_1, z_2)$}
	
 To begin with, we will see that
\begin{align}
	& \tr \lb \E_j [ \bfB_{qj}(z_1) ] \E_j [ \bfB_{qj}(z_2)]  \rb \nonumber \\
	& =
	\tr \Big\{
	 \bfSigma^{1/2} \E_j \left[   \D_j^{-1}(z_1) - \tilde\D_{j(q)}^{-}(z_1)  \right] \bfSigma^{1/2} 
	  \bfSigma^{1/2} \E_j \left[ \D_j^{-1}(z_2) - \tilde\D_{j(q)}^{-}(z_2) \right] \bfSigma^{1/2} \Big\} 
	  \nonumber \\ 
	  & =
	\tr \Big\{
	 \bfSigma \E_j \left[   \D_j^{-1}(z_1) - \tilde\D_{j(q)}^{-}(z_1)  \right] \bfSigma \E_j \left[ \D_j^{-1}(z_2) - \tilde\D_{j(q)}^{-}(z_2) \right] \Big\} \nonumber 
	  \\ 
	     = & \frac{1}{z_1 z_2} \tr \bfSigma \mathbf{H}^{\Delta}_q(z_1) \bfSigma \mathbf{H}^{\Delta}_q (z_2) 
	   + \tr \bfSigma \E_j [ \mathbf{X}_j(z_1) ] \bfSigma \E_j \left[ \D_j^{-1}(z_2) - \tilde\D_{j(q)}^{-}(z_2) \right] + o_{\PR}(1), \label{e1}
\end{align}
where we used \eqref{formula_Bq} for the first equality sign. 
Here, the remainder does not depend on $j$ and we define 
\begin{align}
   \mathbf{H}_{q}^\Delta(z) & = \mathbf{H}(z) - \mathbf{H}_q(z), \nonumber  \\ 
    \mathbf{H}(z) & = \lb  \bfI + \underline{s}(z) \bfSigma \rb \inv, \nonumber \\
    \mathbf{H}_q(z) &= \lb  \tilde\bfI^{(-q)} + \underline{s}(z) \tilde\bfSigma^{(-q)} \rb^{-}. \label{def_H}
\end{align}
Indeed, \eqref{e1} follows from \eqref{decomp} 
and the estimates 
\begin{align*}
   \E \left| \tr \bfSigma \mathbf{H}_{q}^\Delta(z_1) \bfSigma \E_j[ \mathbf{Y}_j(z_2) ] \right| 
    & = o(1), \\ 
    \E \left| \tr \bfSigma \mathbf{H}_{q}^\Delta(z_1) \bfSigma \E_j[ \mathbf{X}_j(z_2) ] \right| 
    & = o(1), \\ 
    \E \left| \tr \bfSigma \E_j[ \mathbf{Y}_j(z_1) ] \bfSigma \E_j \left[ \D_j^{-1}(z_2) - \tilde\D_{j(q)}^{-}(z_2) \right] \right|  & = o(1), \\
      \E \left|  \tr \bfSigma \mathbf{H}_{q}^\Delta(z_1) \bfSigma \E_j[ \mathbf{Z}_j(z_2) ] \right| 
    & = o(1), \\ 
    \E \left|  \tr \bfSigma \E_j [ \mathbf{Z}_j(z_1) ] \bfSigma \E_j \left[ \D_j^{-1}(z_2) - \tilde\D_{j(q)}^{-}(z_2) \right] \right|  & = o(1),
\end{align*}
which can be obtained by a tedious but straightforward calculation using
 \eqref{bound_quad_form} and \eqref{eq_sher_mor}.

We continue by analyzing the remaining term involving $\mathbf{X}_j(z_1)$. Similarly to 
formula (9.9.17) in  \cite{bai2004}, we decompose
\begin{align*}
   &  \tr \bfSigma \E_j [ \mathbf{X}_j(z_1) ] \bfSigma \lb \D_j^{-1}(z_2) - \tilde\D_{j(q)}^{-}(z_2) \rb \\ 
    & = \sum\limits_{\substack{i=1 }}^{j-1} \tr \Big\{ b_j(z_1) \bfSigma \lb z_1\mathbf{I} - \frac{n-1}{n} b_{j}(z_1) \bfSigma \rb\inv \lb \rd_{i} \rd_{i}^\star - n\inv \bfSigma \rb \E_j [ \D_{ij}\inv(z_1) ] \\
 	 	& - b_{j(q)}(z_1) \bfSigma \lb z_1 \tilde{\mathbf{I}}^{(-q)} - \frac{n-1}{n} b_{j(q)}(z_1) \tilde{\bfSigma}^{(-q)} \rb^- \lb \rd_{i} \rd_{i}^\star - n\inv \tilde{\bfSigma}^{(-q)} \rb \E_j [ \tilde{\D}_{ij(q)}^-(z_1) ] \Big\}  \bfSigma  \lb \D_j^{-1}(z_2) - \tilde\D_{j(q)}^{-}(z_2) \rb 
   \\ &  = X_{1j} (z_1, z_2) + X_{2j} (z_1, z_2) + X_{3j} (z_1, z_2),
\end{align*}
where  
\begin{align*}
    X_{1j} (z_1, z_2) & =  \sum\limits_{\substack{i=1 }}^{j-1} \tr \Big\{ b_j(z_1) \bfSigma \lb z_1 \mathbf{I} - \frac{n-1}{n} b_{j}(z_1) \bfSigma \rb\inv  \rd_{i} \rd_{i}^\star  \E_j [ \D_{ij}\inv(z_1) ] \\
 	 	& - b_{j(q)}(z_1) \bfSigma \lb z_1 \tilde{\mathbf{I}}^{(-q)} - \frac{n-1}{n} b_{j(q)}(z_1) \tilde{\bfSigma}^{(-q)} \rb^-  \rd_{i} \rd_{i}^\star  \E_j [ \tilde{\D}_{ij(q)}^-(z_1) ] \Big\}  \bfSigma 
 	 	\\ & \times  \lb  - \beta_{ij}(z_2) \D_{ij}\inv(z_2) \rd_i \rd_i^\star \D_{ij}\inv(z_2)  
 	 	+ \beta_{ij(q)}(z_2) \tilde\D_{ij(q)}^{-}(z_2) \rd_i \rd_i^\star \tilde\D_{ij(q)}^{-}(z_2) \rb , \\
 	 	& = \sum\limits_{i=1}^{j - 1} \Big\{ 
 	 	- 	b_j(z_1) \beta_{ij}(z_2) \rd_{i}^\star  \E_j [ \D_{ij}\inv(z_1) ] \bfSigma   \D_{ij}\inv(z_2) \rd_i \rd_i^\star \D_{ij}\inv(z_2) 
 	  \bfSigma \lb z_1 \mathbf{I} - \frac{n-1}{n} b_{j}(z_1) \bfSigma \rb\inv  \rd_{i}
 	 \\ & 	+ b_j(z_1) \beta_{ij(q)}(z_2) \rd_{i}^\star  \E_j [ \D_{ij}\inv(z_1) ] \bfSigma \tilde\D_{ij(q)}^{-}(z_2) \rd_i \rd_i^\star \tilde\D_{ij(q)}^{-}(z_2) 
 	 	 \bfSigma \lb z_1 \mathbf{I} - \frac{n-1}{n} b_{j}(z_1) \bfSigma \rb\inv  \rd_{i}
 	 	\\ 
 	 	& + b_{j(q)}(z_1) \beta_{ij}(z_2) \rd_{i}^\star  \E_j [ \tilde{\D}_{ij(q)}^-(z_1) ]  \bfSigma
 	 	 \D_{ij}\inv(z_2) \rd_i \rd_i^\star \D_{ij}\inv(z_2) 
 	 	 \bfSigma \lb z_1 \tilde{\mathbf{I}}^{(-q)} - \frac{n-1}{n} b_{j(q)}(z_1) \tilde{\bfSigma}^{(-q)} \rb^-  \rd_{i}\\ 
 	 	& - b_{j(q)}(z_1)	\beta_{ij(q)}(z_2) \rd_{i}^\star  \E_j [ \tilde{\D}_{ij(q)}^-(z_1) ]  \bfSigma 
 	  \tilde\D_{ij(q)}^{-}(z_2) \rd_i \rd_i^\star \tilde\D_{ij(q)}^{-}(z_2) 
 	 	 \bfSigma \lb z_1 \tilde{\mathbf{I}}^{(-q)} - \frac{n-1}{n} b_{j(q)}(z_1) \tilde{\bfSigma}^{(-q)} \rb^-  \rd_{i} \Big\} , \\
    X_{2j} (z_1, z_2) &= - n \inv \tr \sum\limits_{\substack{i=1 }}^{j-1} \Big\{ b_j(z_1) \bfSigma \lb z_1 \mathbf{I} - \frac{n-1}{n} b_{j}(z_1) \bfSigma \rb\inv  \bfSigma  \E_j [ \D_{ij}\inv(z_1) ] \\
 	 	& - b_{j(q)}(z_1) \bfSigma \lb z \tilde{\mathbf{I}}^{(-q)} - \frac{n-1}{n} b_{j(q)}(z_1) \tilde{\bfSigma}^{(-q)} \rb^- \tilde{\bfSigma}^{(-q)}  \E_j [ \tilde{\D}_{ij(q)}^-(z_1) ] \Big\}  \bfSigma \\ & \times   \lb \D_j^{-1}(z_2) - \D_{ij}^{-1}(z_2) - (  \tilde\D_{j(q)}^{-}(z_2) - \tilde\D_{ij(q)}^{-}(z_2) ) \rb  , \\ 
 	 	X_{3j} (z_1, z_2) & = \sum\limits_{\substack{i=1 }}^{j-1} \tr \Big\{ b_j(z_1) \bfSigma \lb z_1 \mathbf{I} - \frac{n-1}{n} b_{j}(z_1) \bfSigma \rb\inv \lb \rd_{i} \rd_{i}^\star - n\inv \bfSigma \rb \E_j [ \D_{ij}\inv(z_1) ] \\
 	 	& - b_{j(q)}(z_1) \bfSigma \lb z_1 \tilde{\mathbf{I}}^{(-q)} - \frac{n-1}{n} b_{j(q)}(z_1) \tilde{\bfSigma}^{(-q)} \rb^- \lb \rd_{i} \rd_{i}^\star - n\inv \tilde{\bfSigma}^{(-q)} \rb \E_j [ \tilde{\D}_{ij(q)}^-(z_1) ] \Big\} \\ & \times \bfSigma \lb  \D_{ij}^{-1}(z_2) - \tilde\D_{ij(q)}^{-}(z_2) \rb  \\
 	 	& =  \sum\limits_{\substack{i=1 }}^{j-1} \Big\{  \rd_i^\star \Big( \E_j [ \D_{ij}\inv(z_1) ] \bfSigma \lb  \D_{ij}^{-1}(z_2) - \tilde\D_{ij(q)}^{-}(z_2) \rb b_j(z_1) \bfSigma \lb z_1 \mathbf{I} - \frac{n-1}{n} b_{j}(z_1) \bfSigma \rb\inv 
 	 	\\ & - \E_j [ \tilde{\D}_{ij(q)}^-(z_1) ] \bfSigma \lb  \D_{ij}^{-1}(z_2) - \tilde\D_{ij(q)}^{-}(z_2) \rb b_{j(q)}(z_1) \bfSigma \lb z_1 \tilde{\mathbf{I}}^{(-q)} - \frac{n-1}{n} b_{j(q)}(z_1) \tilde{\bfSigma}^{(-q)} \rb^-
 	 	\Big) \rd_i \\ 
 	 	& - n\inv \tr \bfSigma \Big( \E_j [ \D_{ij}\inv(z_1) ]\bfSigma \lb  \D_{ij}^{-1}(z_2) - \tilde\D_{ij(q)}^{-}(z_2) \rb  b_j(z_1) \bfSigma \lb z_1\mathbf{I} - \frac{n-1}{n} b_{j}(z_1) \bfSigma \rb\inv 
 	 	\\ & - \E_j [ \tilde{\D}_{ij(q)}^-(z_1) ] \bfSigma \lb  \D_{ij}^{-1}(z_2) - \tilde\D_{ij(q)}^{-}(z_2) \rb  b_{j(q)}(z_1) \bfSigma \lb z_1 \tilde{\mathbf{I}}^{(-q)} - \frac{n-1}{n}  b_{j(q)}(z_1) \tilde{\bfSigma}^{(-q)} \rb^-
 	 	\Big)  \Big\} 
\end{align*}
In the following, we will show that 
${1 \over n} \sum_{j=1}^n b_j(z_1) b_j(z_2) X_{2j}(z_1,z_2)$ and ${1 \over n}  \sum_{j=1}^n 
b_j(z_1) b_j(z_2) X_{3j}(z_1,z_2)$ are asymptotically negligible, while the term  ${1 \over n}  \sum_{j=1}^n b_j(z_1) b_j(z_2) X_{1j}(z_1,z_2)$  contributes to the covariance structure. 

We first consider $X_{2j}(z_1, z_2)$.
Observing  \eqref{eq_sher_mor},  the representation 
\begin{align} \label{eq_beta_gamma}
    \beta_j(z) = b_j(z) - \beta_j(z) b_j(z) \gamma_j(z),
\end{align}
we obtain 
\begin{align*}
    X_{2j} (z_1, z_2) & =  n \inv \tr \sum\limits_{\substack{i=1 }}^{j-1} \Big\{ b_j(z_1) \bfSigma \lb z_1 \mathbf{I} - \frac{n-1}{n} b_{j}(z_1) \bfSigma \rb\inv  \bfSigma  \E_j [ \D_{ij}\inv(z_1) ] \\
 	 	& - b_{j(q)}(z_1) \bfSigma \lb z \tilde{\mathbf{I}}^{(-q)} - \frac{n-1}{n} b_{j(q)}(z_1) \tilde{\bfSigma}^{(-q)} \rb^- \tilde{\bfSigma}^{(-q)}  \E_j [ \tilde{\D}_{ij(q)}^-(z_1) ] \Big\}  \bfSigma \\ & \times   \lb \beta_{ij}(z_2) \D_{ij}^{-1}(z_2) \rd_i \rd_i^\star \D_{ij}^{-1}(z_2) - \beta_{ij(q)}(z_2) \tilde\D_{ij(q)}^{-}(z_2) \rd_i \rd_i^\star \tilde\D_{ij(q)}^{-}(z_2) ) \rb \\
 	 	= &  n \inv \tr \sum\limits_{\substack{i=1 }}^{j-1} \Big\{ b_j(z_1) \bfSigma \lb z_1 \mathbf{I} - \frac{n-1}{n} b_{j}(z_1) \bfSigma \rb\inv  \bfSigma  \E_j [ \D_{ij}\inv(z_1) ] \\
 	 	& - b_{j(q)}(z_1) \bfSigma \lb z \tilde{\mathbf{I}}^{(-q)} - \frac{n-1}{n} b_{j(q)}(z_1) \tilde{\bfSigma}^{(-q)} \rb^- \tilde{\bfSigma}^{(-q)}  \E_j [ \tilde{\D}_{ij(q)}^-(z_1) ] \Big\}  \bfSigma \\ & \times   \Big( b_{ij}(z_2) \D_{ij}^{-1}(z_2) \rd_i \rd_i^\star \D_{ij}^{-1}(z_2) - b_{ij(q)}(z_2) \tilde\D_{ij(q)}^{-}(z_2) \rd_i \rd_i^\star \tilde\D_{ij(q)}^{-}(z_2)  \\
 	 	& - b_{ij}(z_2) \beta_{ij}(z_2) \gamma_{ij}(z_2) \D_{ij}^{-1}(z_2) \rd_i \rd_i^\star \D_{ij}^{-1}(z_2) + b_{ij(q)}(z_2) \beta_{ij(q)}(z_2) \gamma_{ij(q)}(z_2) \tilde\D_{ij(q)}^{-}(z_2) \rd_i \rd_i^\star \tilde\D_{ij(q)}^{-}(z_2) \Big)~.
\end{align*}
Using \eqref{bound_quad_form}, this yields
\begin{align*}
 \frac{1}{n} \sum\limits_{j=1}^n  
 b_j(z_1) b_j(z_2)  X_{2j}  (z_1, z_2) = o_{\PR} (1). 
\end{align*}
 To bound the term $X_{3j}(z_1,z_2)$, we employ the following strategy. We denote the summands of $X_{3,j}(z_1, z_2)$ by $X_{3,j,i}(z_1, z_2)$, $1\leq i \leq j -1$, and thus, we write
\begin{align*}
    X_{3,j}(z_1, z_2) & = \sum\limits_{i=1}^{j-1} X_{3,j,i}(z_1, z_2). 
\end{align*} 
We aim to show that for $1\leq j \leq n$
\begin{align} \label{aim_x3}
    \E \left| X_{3,j}(z_1,z_2) \right|^2 = \sum\limits_{i=1}^{j -1} \E \left|  X_{3,j,i}(z_1, z_2) \right|^2 
    + \sum\limits_{i,k=1, i\neq k}^{j -1} \E \left[   X_{3,j,i}(z_1, z_2) \overline{X_{3,j,k}(z_1, z_2)} \right] = o(1). 
\end{align}
Using \eqref{bound_quad_form}, we see that $\sum_{i=1}^{j -1} \E |  X_{3,j,i}(z_1, z_2)|^2 =o(1).$ Thus, it is left to analyze the sum of cross terms. We use \eqref{eq_sher_mor} to replace the matrices $\D\inv_{ij}(z), \tilde{\D}\inv_{ij(q)}(z), \D\inv_{kj}(z), \tilde{\D}\inv_{kj(q)}(z)$ and we use \eqref{eq_beta_gamma} to replace the scalars $\beta_{kij}(z), \beta_{kij(q)}(z), \beta_{ikj}(z), \beta_{ikj(q)}(z)$, which gives different types of resulting terms. Here, the matrices $\D_{ij}\inv(z), \D_{ij(q)}\inv(z)$ and the scalars $\beta_{ij}(z), \beta_{ijk}(z), \beta_{ij(q)}(z), \beta_{ijk(q)}(z)$ are defined similarly to $\D_j\inv(z), \D_{j(q)}\inv(z)$ and $\beta_{j}(z), \beta_{j(q)}(z)$, respectively. 
Thus, we get the following representation
\begin{align*}
  \sum\limits_{i,k=1, i\neq k}^{j -1} \E \left[   X_{3,j,i}(z_1, z_2) \overline{X_{3,j,k}(z_1, z_2)} \right]
  = 
   \sum  \sum\limits_{i,k=1, i\neq k}^{j -1} \E \left[   T_{3,j,i,k}(z_1, z_2) \right].
\end{align*}
Here, the first sum on the right-hand side corresponds to the summation with respect to a finite number of different terms $T_{3,j,i,k}(z_1, z_2)$.
On the one hand, the expected value of quadratic forms involving only matrices like $\D\inv_{ij}(z)$ and $\tilde{\D}\inv_{ij(q)}(z)$ and no $\beta$-term or either only $\beta_{kij}(z), \beta_{kij(q)}(z)$ or $\beta_{ikj}(z), \beta_{ikj(q)}(z)$ is equal to zero. On the other hand, the remaining terms can be shown to be of order $o(n^{-2})$ by using Cauchy-Schwarz inequality and \eqref{bound_quad_form}. 

Let us now consider the contributing term $X_{1j}(z_1, z_2)$ in more detail. The quantities $\beta_{ij}(z)$ and $\beta_{ij(q)}(z)$ can be replaced by $b_j(z)$, resulting in a negligible error. 
  Therefore, using \eqref{decomp}, formula (9.9.12) in \cite{bai2004},
  recalling  the definition of the matrices $\mathbf{H}_{q}^\Delta(z), \mathbf{H}(z)$ in \eqref{def_H},
  and  using the notation
\begin{align*}
    \mathbf{G}(z) & = \lb z \mathbf{I} - \frac{n-1}{n} b_{j}(z) \bfSigma \rb\inv 
    , \\
    \mathbf{G}_q(z) &= \lb z \tilde{\mathbf{I}}^{(-q)} - \frac{n-1}{n} b_{j(q)}(z) \tilde{\bfSigma}^{(-q)} \rb^-,
\end{align*}
  we investigate  
\begin{align*}
    \tilde X_{1j}(z_1, z_2) 
    & = b_j(z_1) b_{j}(z_2) \sum\limits_{i=1}^{j - 1} \Big\{ 
 	 	- 	 \rd_{i}^\star  \E_j [ \D_{ij}\inv(z_1) ] \bfSigma   \D_{ij}\inv(z_2) \rd_i \rd_i^\star \D_{ij}\inv(z_2) 
 	  \bfSigma \mathbf{G}(z_1)  \rd_{i}
 	 \\ & 	+ \rd_{i}^\star  \E_j [ \D_{ij}\inv(z_1) ] \bfSigma \tilde\D_{ij(q)}^{-}(z_2) \rd_i \rd_i^\star \tilde\D_{ij(q)}^{-}(z_2) 
 	 	 \bfSigma \mathbf{G}(z_1)  \rd_{i}
 	 	\\ 
 	 	& +  \rd_{i}^\star  \E_j [ \tilde{\D}_{ij(q)}^-(z_1) ]  \bfSigma
 	 	 \D_{ij}\inv(z_2) \rd_i \rd_i^\star \D_{ij}\inv(z_2) 
 	 	 \bfSigma \mathbf{G}_q(z_1)  \rd_{i}\\ 
 	 	& -  \rd_{i}^\star  \E_j [ \tilde{\D}_{ij(q)}^-(z_1) ]  \bfSigma 
 	  \tilde\D_{ij(q)}^{-}(z_2) \rd_i \rd_i^\star \tilde\D_{ij(q)}^{-}(z_2) 
 	 	 \bfSigma \mathbf{G}_q(z_1)   \rd_{i} \Big\}  \\
 	 	 & =  b_j(z_1) b_{j}(z_2) \sum\limits_{i=1}^{j - 1} \Big\{ \\ & 
 	 	- 	 \rd_{i}^\star  \E_j [ \D_{ij}\inv(z_1) ] \bfSigma   \lb \D_{ij}\inv(z_2) - \tilde\D_{ij(q)}^{-}(z_2)\rb  \rd_i \rd_i^\star \D_{ij}\inv(z_2) 
 	  \bfSigma \mathbf{G}(z_1)  \rd_{i}
 	 \\ & 	+ \rd_{i}^\star  \E_j [ \D_{ij}\inv(z_1) ] \bfSigma \tilde\D_{ij(q)}^{-}(z_2) \rd_i \rd_i^\star \lb \tilde\D_{ij(q)}^{-}(z_2) - \D_{ij}\inv(z_2) \rb 
 	 	 \bfSigma \mathbf{G}(z_1)  \rd_{i}
 	 	\\ 
 	 	& +  \rd_{i}^\star  \E_j [ \tilde{\D}_{ij(q)}^-(z_1) ]  \bfSigma
 	 	\lb  \D_{ij}\inv(z_2) - \tilde{\D}_{ij(q)}^-(z_2) \rb \rd_i \rd_i^\star \D_{ij}\inv(z_2) 
 	 	 \bfSigma \mathbf{G}_q(z_1)   \rd_{i}\\ 
 	 	& -  \rd_{i}^\star  \E_j [ \tilde{\D}_{ij(q)}^-(z_1) ]  \bfSigma 
 	  \tilde\D_{ij(q)}^{-}(z_2) \rd_i \rd_i^\star \lb \tilde\D_{ij(q)}^{-}(z_2) - \D_{ij}\inv(z_2) \rb
 	 	 \bfSigma\mathbf{G}_q(z_1)   \rd_{i} 
     \Big\}  \\
 	 	 & = n^{-2} b_j(z_1) b_{j}(z_2) \sum\limits_{i=1}^{j - 1} \Big\{ \\ & 
 	 	- 	\tr \left[ \bfSigma  \E_j [ \D_{ij}\inv(z_1) ] \bfSigma   \lb \D_{ij}\inv(z_2) - \tilde\D_{ij(q)}^{-}(z_2)\rb \right]
 	 	\tr \left[ \bfSigma  \D_{ij}\inv(z_2) 
 	  \bfSigma \mathbf{G}(z_1)  \right]  
 	 \\ & 	+ \tr \left[ \bfSigma \E_j [ \D_{ij}\inv(z_1) ] \bfSigma \tilde\D_{ij(q)}^{-}(z_2) \right] 
 	 \tr \left[ \bfSigma \lb \tilde\D_{ij(q)}^{-}(z_2) - \D_{ij}\inv(z_2) \rb 
 	 	 \bfSigma \mathbf{G}(z_1)    \right] 
 	 	\\ 
 	 	& +  \tr \left[ \bfSigma  \E_j [ \tilde{\D}_{ij(q)}^-(z_1) ]  \bfSigma
 	 	\lb  \D_{ij}\inv(z_2) - \tilde{\D}_{ij(q)}^-(z_2) \rb \right] 
 	 	\tr \left[ \bfSigma \D_{ij}\inv(z_2) 
 	 	 \bfSigma \mathbf{G}_q(z_1)  \right] \\ 
 	 	& -  \tr \left[ \bfSigma  \E_j [ \tilde{\D}_{ij(q)}^-(z_1) ]  \bfSigma 
 	  \tilde\D_{ij(q)}^{-}(z_2) \right] \tr \left[ \bfSigma  \lb \tilde\D_{ij(q)}^{-}(z_2) - \D_{ij}\inv(z_2) \rb
 	 	 \bfSigma \mathbf{G}_q(z_1)  
     \right] \Big\}  \\
 	 	 & + o_{\PR } (1) \\ 
 	 	 & = \frac{j- 1 }{n^2} b_j(z_1) b_{j}(z_2)  \Big\{ \\ & 
 	 	- 	\tr \left[ \bfSigma  \E_j [ \D_{j}\inv(z_1) ] \bfSigma   \lb \D_{j}\inv(z_2) - \tilde\D_{j(q)}^{-}(z_2)\rb \right]
 	 	\tr \left[ \bfSigma  \D_{j}\inv(z_2) 
 	  \bfSigma \mathbf{G}(z_1)  \right]  
 	 \\ & 	+ \tr \left[ \bfSigma \E_j [ \D_{j}\inv(z_1) ] \bfSigma \tilde\D_{j(q)}^{-}(z_2) \right] 
 	 \tr \left[ \bfSigma \lb \tilde\D_{j(q)}^{-}(z_2) - \D_{j}\inv(z_2) \rb 
 	 	 \bfSigma \mathbf{G}(z_1)  \right] 
 	 	\\ 
 	 	& +  \tr \left[ \bfSigma  \E_j [ \tilde{\D}_{j(q)}^-(z_1) ]  \bfSigma
 	 	\lb  \D_{j}\inv(z_2) - \tilde{\D}_{j(q)}^-(z_2) \rb \right] 
 	 	\tr \left[ \bfSigma \D_{j}\inv(z_2) 
 	 	 \bfSigma \mathbf{G}_q(z_1)   \right] \\ 
 	 	& -  \tr \left[ \bfSigma  \E_j [ \tilde{\D}_{j(q)}^-(z_1) ]  \bfSigma 
 	  \tilde\D_{j(q)}^{-}(z_2) \right] \tr \left[ \bfSigma  \lb \tilde\D_{j(q)}^{-}(z_2) - \D_{j}\inv(z_2) \rb
 	 	 \bfSigma \mathbf{G}_q(z_1)  \right] \Big\}  \\
 	 	 & + o_{\PR } (1) \\ 
 	 	  & = \frac{j- 1 }{n^2} b_j(z_1) b_{j}(z_2)  \Big\{  	\tr \left[ \bfSigma  \E_j [ \D_{j}\inv(z_1) ] \bfSigma   \lb \D_{j}\inv(z_2) - \tilde\D_{j(q)}^{-}(z_2)\rb \right]
 	 	\tr \left[ \bfSigma  \mathbf{G}(z_1) 
 	  \bfSigma \mathbf{G}(z_2) \right]  
 	 \\ & 	- \tr \left[ \bfSigma \E_j [ \D_{j}\inv(z_1) ] \bfSigma \tilde\D_{j(q)}^{-}(z_2) \right] 
 	 \tr \left[ \bfSigma \mathbf{G}(z_1)   
 	 	 \bfSigma \lb \mathbf{G}_q(z_2)  - \mathbf{G}(z_2) \rb  \right] 
 	 	\\ 
 	 	& -  \tr \left[ \bfSigma  \E_j [ \tilde{\D}_{j(q)}^-(z_1) ]  \bfSigma
 	 	\lb  \D_{j}\inv(z_2) - \tilde{\D}_{j(q)}^-(z_2) \rb \right] 
 	 	\tr \left[ \bfSigma  \mathbf{G}_q(z_1)  \bfSigma 
 	 	 \mathbf{G}(z_2)
 	 	  \right] \\ 
 	 	& +  \tr \left[ \bfSigma  \E_j [ \tilde{\D}_{j(q)}^-(z_1) ]  \bfSigma 
 	  \tilde\D_{j(q)}^{-}(z_2) \right]
 	  \tr \left[ \bfSigma \mathbf{G}_q(z_1) \bfSigma 
 	  \lb \mathbf{G}_q (z_2) - \mathbf{G}(z_2) \rb 
 	   \right] \Big\}  
 	 	  + o_{\PR } (1), \\ 
 	 	   & = \frac{j- 1 }{n^2} b_j(z_1) b_{j}(z_2)  \Big\{  	\tr \left[ \bfSigma  \E_j [ \D_{j}\inv(z_1) ] \bfSigma   \lb \D_{j}\inv(z_2) - \tilde\D_{j(q)}^{-}(z_2)\rb \right]
 	 	\tr \left[ \bfSigma  \mathbf{G}(z_1) 
 	  \bfSigma \mathbf{G}(z_2) \right]  
 	 \\ & 	- \tr \left[ \bfSigma \E_j [ \D_{j}\inv(z_1) ] \bfSigma \D_{j}^{-1}(z_2) \right] 
 	 \tr \left[ \bfSigma \mathbf{G}(z_1)   
 	 	 \bfSigma \lb \mathbf{G}_q(z_2)  - \mathbf{G}(z_2) \rb  \right] 
 	 	\\ 
 	 	& -  \tr \left[ \bfSigma  \E_j [ \tilde{\D}_{j(q)}^-(z_1) ]  \bfSigma
 	 	\lb  \D_{j}\inv(z_2) - \tilde{\D}_{j(q)}^-(z_2) \rb \right] 
 	 	\tr \left[ \bfSigma  \mathbf{G}(z_1)  \bfSigma 
 	 	 \mathbf{G}(z_2)
 	 	  \right] \\ 
 	 	& +  \tr \left[ \bfSigma  \E_j [ \D_{j}\inv (z_1) ]  \bfSigma 
 	  \D_{j}^{-1}(z_2) \right]
 	  \tr \left[ \bfSigma \mathbf{G}_q(z_1) \bfSigma 
 	  \lb \mathbf{G}_q (z_2) - \mathbf{G}(z_2) \rb 
 	   \right] \Big\}  
 	 	  + o_{\PR } (1), \\ 
 	 	  & = \frac{j- 1 }{n^2} b_j(z_1) b_{j}(z_2)  \Big\{  	\tr \left[ \bfSigma  \E_j [ \D_{j}\inv(z_1) - \tilde\D_{j(q)}^{-}(z_1) ] \bfSigma   \lb \D_{j}\inv(z_2) - \tilde\D_{j(q)}^{-}(z_2)\rb \right]
 	 	\tr \left[ \bfSigma  \mathbf{G}(z_1) 
 	  \bfSigma \mathbf{G}(z_2) \right]  
 	   	 \\ & 	+ \tr \left[ \bfSigma \E_j [ \D_{j}\inv(z_1) ] \bfSigma \D_{j}^{-1}(z_2) \right] 
 	 \tr \left[ \bfSigma \lb \mathbf{G}(z_1)  - \mathbf{G}_q(z_1) \rb 
 	 	 \bfSigma \lb  \mathbf{G}(z_2) - \mathbf{G}_q(z_2) \rb  \right] \Big\} 
 	 	  + o_{\PR } (1) \\ 
 	 	     & = \frac{ ( j- 1 ) b_j(z_1) b_{j}(z_2) }{n^2 z_1 z_2 }   \Big\{  	\tr \left[ \bfSigma  \E_j [ \mathbf{B}_{qj}(z_1) ] \bfSigma   \mathbf{B}_{qj}(z_2) \right]
 	 	\tr \left[ \bfSigma  \mathbf{H}(z_1) 
 	  \bfSigma \mathbf{H}(z_2) \right]  
 	 \\ & 	+ \tr \left[ \bfSigma \E_j [ \D_{j}\inv(z_1) ] \bfSigma \D_{j}^{-1}(z_2) \right] 
 	 \tr \left[ \bfSigma  \mathbf{H}_{q}^\Delta(z_1)  
 	 	 \bfSigma  \mathbf{H}_{q}^\Delta(z_2)   \right] \Big\} 
 	 	  + o_{\PR } (1)~.
\end{align*}
Combining this with \eqref{e1}, we conclude
\begin{align*}
    	& \tr \lb \E_j [ \bfB_{qj}(z_1) ] \E_j [ \bfB_{qj}(z_2)]  \rb 
    	= \frac{1}{z_1 z_2} \tr \bfSigma \mathbf{H}_{q}^\Delta(z_1) \bfSigma \mathbf{H}_{q}^\Delta(z_2) \\
    & 	+\frac{ ( j- 1 ) b_j(z_1) b_{j}(z_2) }{n^2 z_1 z_2 }   \Big\{  	\tr \left[ \bfSigma  \E_j [ \mathbf{B}_{qj}(z_1) ] \bfSigma   \E_j [ \mathbf{B}_{qj}(z_2) ]  \right]
 	 	\tr \left[ \bfSigma  \mathbf{H}(z_1) 
 	  \bfSigma \mathbf{H}(z_2) \right]  
 	 \\ & 	+ \tr \left[ \bfSigma \E_j [ \D_{j}\inv(z_1) ] \bfSigma \E_j[  \D_{j}^{-1}(z_2) ]  \right] 
 	 \tr \left[ \bfSigma  \mathbf{H}_{q}^\Delta(z_1)  
 	 	 \bfSigma  \mathbf{H}_{q}^\Delta(z_2)   \right] \Big\} 
    	+ o_{\PR}(1),
\end{align*}
where the negligible terms do not depend on $j.$ This gives 
\begin{align}
    & V_n^{(2)} (z_1, z_2) = \frac{\su(z_1) \su(z_2) }{n}  \nonumber \\
    &  \times 
    \sum\limits_{j=1}^n  \frac{\tr \bfSigma \mathbf{H}_{q}^\Delta(z_1) \bfSigma \mathbf{H}_{q}^\Delta(z_2) + \frac{( j-1) b_j(z_1) b_j(z_2) }{n^2}  \tr \left[ \bfSigma \E_j [ \D_{j}\inv(z_1) ] \bfSigma \E_j[  \D_{j}^{-1}(z_2) ]  \right]
 	 \tr \left[ \bfSigma  \mathbf{H}_{q}^\Delta(z_1)  
 	 	 \bfSigma  \mathbf{H}_{q}^\Delta(z_2)   \right] }{1 - \frac{j- 1 }{n} a_n(z_1, z_2) } \nonumber \\
 	 	 & + o_{\PR}(1), \label{rep_vn2}
\end{align}
where we define
\begin{align*}
    a_n(z_1 , z_2) = \frac { \su(z_1) \su (z_2) }{n}  \tr \left[ \bfSigma  \mathbf{H}(z_1) 
 	  \bfSigma \mathbf{H}(z_2) \right] . 
\end{align*}
Recalling that $H$ is the limiting spectral distribution of $\bfSigma$, we observe for $n\to\infty$
\begin{align}
    a_n(z_1, z_2) \to a(z_1, z_2) = y \su(z_1) \su(z_2) \int \frac{\lambda}{( 1 + \lambda \su(z_1) ) ( 1 + \lambda \su(z_2) ) } d H(\lambda). \label{def_a}
\end{align}
The term $  \tr \left[ \bfSigma \E_j [ \D_{j}\inv(z_1) ] \bfSigma \E_j[  \D_{j}^{-1}(z_2) ]  \right] $ has been studied in Section 9.9 of 
\cite{bai2004} (see, in particular  formula (9.9.21) and (9.9.23) in this reference).
These arguments give 
\begin{align*}
    \frac{1}{n} \tr \left[ \bfSigma \E_j [ \D_{j}\inv(z_1) ] \bfSigma \E_j[  \D_{j}^{-1}(z_2) ]  \right]
    & = \frac{1}{n z_1 z_2} \frac{  \tr \left[ \bfSigma  \mathbf{H}(z_1) 
 	  \bfSigma \mathbf{H}(z_2) \right] }{1 - \frac{j - 1}{n} a_n(z_1, z_2) }
    + o_{\PR} (1) \\
    & = \frac{1}{\su(z_1) \su(z_2) z_1 z_2} \frac{ a_n(z_1, z_2) }{1 - \frac{j - 1}{n} a_n(z_1,z_2) } + o_{\PR}(1).
\end{align*}
Combining this with \eqref{rep_vn2}, we have
    \begin{align*}
        V_n^{(2)} & = 
         \frac{\su(z_1) \su(z_2) \tr \bfSigma \mathbf{H}_{q}^\Delta(z_1) \bfSigma \mathbf{H}_{q}^\Delta(z_2)  }{n} \sum\limits_{j=1}^n  \frac{1 + \frac{j-1 }{n}  \frac{a_n(z_1,z_2)}{1 - \frac{j-1}{n} a_n(z_1, z_2) }  }{1 - \frac{j- 1 }{n} a_n(z_1, z_2) } 
         + o_{\PR}(1) \\ 
         & =   \frac{\su(z_1) \su(z_2) \tr \bfSigma \mathbf{H}_{q}^\Delta(z_1) \bfSigma \mathbf{H}_{q}^\Delta(z_2)  }{n} \sum\limits_{j=1}^n  \frac{1   }{ \lb 1 - \frac{j- 1 }{n} a_n(z_1, z_2) \rb^2 } 
         + o_{\PR}(1) \\ 
         & =  \su(z_1) \su(z_2) \tr \bfSigma \mathbf{H}_{q}^\Delta(z_1) \bfSigma \mathbf{H}_{q}^\Delta(z_2)  \int_0^1 \frac{1}{\lb 1 - t a(z_1, z_2) \rb^2 } dt  + o_{\PR}(1) \\ 
         & = \frac{ \su(z_1) \su(z_2) \tr \bfSigma \mathbf{H}_{q}^\Delta(z_1) \bfSigma \mathbf{H}_{q}^\Delta(z_2) }{1 - a(z_1, z_2) }  + o_{\PR}(1) .
    \end{align*} 
    Thus, in the case $q_1 = q_2 =q$ and $\nu_4 = \kappa + 1$, we have shown that \eqref{a10} holds true with
    \begin{align*} 
        \sigma^2(z_1, z_2, q, q) = 
         \frac{\partial^2}{\partial z_1 \partial z_2} \lim\limits_{n\to\infty} \frac{ \su(z_1) \su(z_2) \tr \bfSigma \mathbf{H}_{q}^\Delta(z_1) \bfSigma \mathbf{H}_{q}^\Delta (z_2) }{1 - a(z_1, z_2) }.
    \end{align*}
  For general $1 \leq q_1, q_2 \leq p$, we proceed similarly and have that 
    \begin{align} \label{def_sigma}
       \sigma^2(z_1, z_2, q_1, q_2) =  
         \frac{\partial^2}{\partial z_1 \partial z_2} \lim\limits_{n\to\infty} \frac{ \su(z_1) \su(z_2) \tr \bfSigma \mathbf{H}_{q_1}^\Delta(z_1) \bfSigma \mathbf{H}_{q_2}^\Delta(z_2) }{1 - a(z_1, z_2) }.
    \end{align}
     Note that the existence of the limit on the right-hand side of \eqref{def_sigma} is guaranteed by assumption \ref{A3} and Lemma \ref{lem_tr_F}. Thus, it is left to analyze the term $W_n(z_1, z_2)$ if the fourth moment of the data does not admit a Gaussian type. 
     
     Before proceeding with this analysis, a few comments on other representations of the covariance are in place. 
     In Lemma \ref{lem_tr_F} below, we show that  
    \begin{align*}
        & \tr \bfSigma \mathbf{H}_{q_1}^\Delta(z_1) 
        \bfSigma \mathbf{H}_{q_2}^\Delta(z_2) \\
        & = \Big\{  
        \lb  \lb \bfI + \su(z_1) \bfSigma \rb\inv  \bfSigma \rb_{q_1q_2} 
        - \su(z_1) \lb 
          \lb  \bfI + \underline{s}(z_1) \bfSigma \rb \inv 
      \bfSigma
          \lb  \tilde\bfI^{(-q_2)} + \underline{s}(z_2) \tilde\bfSigma^{(-q_2)} \rb^{-} 
           \bfSigma 
        \rb_{q_1q_2}
        \Big\} \\
        & \times 
        \Big\{ 
        \lb  \lb \bfI + \su(z_2) \bfSigma \rb\inv \bfSigma  \rb_{q_2q_1}
        - \su (z_2) 
       \lb 
          \lb  \bfI + \underline{s}(z_2) \bfSigma \rb \inv 
      \bfSigma
          \lb  \tilde\bfI^{(-q_1)} + \underline{s}(z_1) \tilde\bfSigma^{(-q_1)} \rb^{-} 
           \bfSigma 
        \rb_{q_2q_1}
        \Big\}.
    \end{align*}
   Note that in the  case where  $\bfSigma$ is a diagonal matrix, we have 
   \begin{align}
   & \lb 
          \lb  \bfI + \underline{s}(z_2) \bfSigma \rb \inv 
      \bfSigma
          \lb  \tilde\bfI^{(-q_2)} + \underline{s}(z_1) \tilde\bfSigma^{(-q_2)} \rb^{-} 
           \bfSigma 
        \rb_{q_1q_2} 
        = 0 ~ , \label{tr_F_diagonal_case}
        \end{align}
       $ (1 \leq q_1, q_2 \leq p)$
        and  
        \begin{align*}
            \lb  \lb \bfI + \su(z_1) \bfSigma \rb\inv  \bfSigma \rb_{q_1q_2}  = 0~,
        \end{align*}
       if $1 \leq q_1 \neq  q_2 \leq p$.
     Consequently, $\sigma^2(z_1, z_2, q_1, q_2) = 0$ if 
     $\bfSigma  $ is diagonal  and  $q_1 \neq q_2$.  
    In the following, we proceed with the final step for the proof of Theorem \ref{thm_fidis}. 
    \paragraph*{Step 5: Analysis of $W_n(z_1, z_2)$}
    Recall the definition of $W_n(z_1, z_2)$ in \eqref{def_wn}, which implicitly also depends on $q$. In the previous part of this proof, we assumed that $q=q_1=q_2$. 
    For general $1 \leq q_1, q_2 \leq p$, it  follows  combining techniques from Step 4, especially the decomposition in \eqref{decomp}, with the arguments given in Section 4
    of  \cite{panzhou2008} that
    \begin{align*}
        W_n(z_1, z_2) & =  \frac{v_3 - \kappa - 1}{n} \sum\limits_{j=1}^n  z_1 z_2 \su(z_1) \su(z_2) \tr \lb  \E_j [ \bfB_{q_1 j}(z_1) ] \circ \E_j [ \bfB_{q_2j}(z_2) ] \rb + o_{\PR}(1) \\ 
        & =  (\nu_4 - \kappa - 1)  \su(z_1) \su(z_2) \tr \lb  \bfSigma \mathbf{H}_{q_1}^\Delta(z_1) \circ \bfSigma \mathbf{H}_{q_2}^\Delta(z_2) \rb + o_{\PR}(1) \\
        & =  (\nu_4 -  \kappa - 1)  \su(z_1) \su(z_2) h_{q_1, q_2} (z_1, z_2) + o_{\PR}(1),
    \end{align*}
    where we used assumption \ref{A5}. 
   Thus, under general moment conditions, we have
   \begin{align*}
       \cov(M_{q_1}^{(1)}(z_1), M_{q_2}^{(1)}(z_2) ) =
       \kappa \sigma^2(z_1, \overline{z_2}, q_1, q_2) + 
         ( \nu_4 - \kappa - 1) \tau^2(z_1, \overline{z_2}, q_1, q_2), 
   \end{align*}
   where $\sigma^2$ is defined in \eqref{def_sigma} and 
   \begin{align} \label{def_tau}
           \tau^2(z_1, z_2, q_1, q_2) =   \frac{\partial^2}{\partial z_1 \partial z_2}
        \su(z_1) \su(z_2) h_{q_1, q_2}(z_1, z_2).
   \end{align}
In the case  where $\mathbf{\bfSigma} $ is a diagonal matrx  and $1\leq q_1 \neq q_2 \leq p$, we have $\tr \lb  \bfSigma \mathbf{H}_{q_1}^\Delta(z_1) \circ \bfSigma \mathbf{H}_{q_2}^\Delta(z_2) \rb = 0$, and thus, $\tau^2(z_1, z_2, q_1, q_2)=0.$ This implies, that the Gaussian processes $(M_{q_1}(z))_{z\in \mathcal{C}^+}$ and $(M_{q_2}(z))_{z\in \mathcal{C}^+}$ are independent in this case.

\subsection{Proof of Theorem \ref{thm_tight} (tightness of $\hat{M}_{n,q}^{(1)}$)} \label{sec_tight}

In order to prove tightness, we will verify the conditions (i) and (ii) of 
Theorem 12.3 in  \cite{billingsley1968}.
	For (i), it suffices to show that the sequence $(\hat{M}_{n,q}^{(1)}(z) )_{n\in\N}$ is tight for some $z\in\mathcal{C}^+$. For any $z\in\mathcal{C}^+$ with $\im(z) \neq 0$, this assertion follows from Theorem \ref{thm_fidis}. 
In order to prove (ii), we will show that 
\begin{align*}
    \sup\limits_{n\in\N,~ z_1, z_2 \in \mathcal{C}^+, z_1 \neq z_2} 
    \frac{\E \left| \hat{M}_{n,q}^{(1)}(z_1) - \hat{M}_{n,q}^{(1)}(z_2) \right|^2 }{|z_1 - z_2|^2}
    \lesssim 1,
    \end{align*}
    which is implied by 
    \begin{align} \label{ineq_tight}
    \sup\limits_{n\in\N,~ z_1, z_2 \in \mathcal{C}_n, z_1 \neq z_2 } 
    \frac{\E \left| M_{n,q}^{(1)}(z_1) - M_{n,q}^{(1)}(z_2) \right|^2 }{|z_1 - z_2|^2} \lesssim 1.
    \end{align}
    This reduction can be shown 
    by similar arguments as given in Section 7.2
   of   \cite{diss} which are  omitted for the sake of brevity. Instead, we concentrate on the proof of \eqref{ineq_tight}
   and make use of the decomposition 
\begin{align*}
	\frac{ M_{nq}^{(1)} (z_1) - M_{nq}^{(1)} (z_2) }
	{ z_1 - z_2} 
	& =  \sqrt{n} \sum\limits_{j=1}^n ( \E_j - \E_{j- 1} ) \frac{  \tr \lb \D\inv(z_1) - \D\inv(z_2) \rb - \tr \lb  \D_{(q)}\inv(z_1) - \D_{(q)}\inv(z_2) \rb }{  z_1 - z_2}  \\
	& =  \sqrt{n} \sum\limits_{j=1}^n ( \E_j - \E_{j- 1} ) \left[    \tr \lb \D\inv(z_1)  \D\inv(z_2) \rb - \tr \lb  \D_{(q)}\inv(z_1)  \D_{(q)}\inv(z_2) \rb \right] \\
	 &= \sqrt{n} \lb G_{n1} - G_{n2} - G_{n3} \rb ,
\end{align*}
where
	\begin{align*}
		G_{n1} & =   \sum\limits_{j=1}^{n} ( \E_j - \E_{j - 1} ) \left[ 
	 	\beta_{j}(z_1) \beta_{j}(z_2) \lb \rd_j^\star \D_{j}\inv(z_1) \D_{j}\inv(z_2)
	 	\rd_j \rb^2 
	 	- \beta_{j(q)}(z_1) \beta_{j(q)}(z_2) \lb \rd_{jq}^\star \D_{j(q)}\inv(z_1) \D_{j(q)}\inv(z_2)
	 	\rd_{jq} \rb^2 \right] , \\
	 	G_{n2} &=   \sum\limits_{j=1}^{n} ( \E_j - \E_{j - 1} )
	 	\left[ 
	 	\beta_{j}(z_1) \rd_j^\star \D_{j}^{-2}(z_1) \D_{j}\inv(z_2) \rd_j - \beta_{j(q)}(z_1) \rd_{jq}^\star \D_{j(q)}^{-2}(z_1) \D_{j(q)}\inv(z_2) \rd_{jq} \right] , \\
	 	G_{n3} &=   \sum\limits_{j=1}^{n} ( \E_j - \E_{j - 1} )
	 	\left[ 
	 	\beta_{j}(z_2) \rd_j^\star \D_{j}^{-2}(z_2) \D_{j}\inv(z_1) \rd_j 
	 	- \beta_{j(q)}(z_2) \rd_{jq}^\star \D_{j(q)}^{-2}(z_2) \D_{j(q)}\inv(z_1) \rd_{jq}\right] .
	\end{align*}
 These terms are now investigated separately 
	beginning with 
	\begin{align*}
		G_{n1} = G_{n11} - G_{n12} - G_{n13},
	\end{align*}
	where
	 \begin{align*}
	 	G_{n11} 
	 	= &  \sum\limits_{j=1}^{n} ( \E_j - \E_{j - 1} ) \Big[ 
	 	b_{j}(z_1) b_{j}(z_2) \lb \rd_j^\star \D_{j}\inv(z_1) \D_{j}\inv(z_2)
	 	\rd_j \rb^2 \\
   & - b_{j(q)}(z_1) b_{j(q)}(z_2) \lb \rd_{jq}^\star \D_{j(q)}\inv(z_1) \D_{j(q)}\inv(z_2)
	 	\rd_{jq} \rb^2
	 	\Big ] , \\
	 	G_{n12}
	 	= &   \sum\limits_{j=1}^{n} ( \E_j - \E_{j - 1} )
	 	\Big[ 
	 	b_{j}(z_2) \beta_{j}(z_1) \beta_{j}(z_2) \lb \rd_j^\star \D_{j}\inv(z_1) \D_{j}\inv(z_2)
	 	\rd_j \rb^2 \gamma_{j}(z_2)
	 	\\ & - b_{j(q)}(z_2) \beta_{j(q)}(z_1) \beta_{j(q)}(z_2) \lb \rd_{jq}^\star \D_{j(q)}\inv(z_1) \D_{j(q)}\inv(z_2)
	 	\rd_{jq} \rb^2 \gamma_{j(q)}(z_2) \Big], \\
	 	G_{n13}
	 	= &   \sum\limits_{j=1}^{n} ( \E_j - \E_{j - 1} ) \Big[ 
	 	b_{j}(z_1) b_{j}(z_2)  \beta_{j}(z_1) \lb \rd_j^\star \D_{j}\inv(z_1) \D_{j}\inv(z_2)
	 	\rd_j \rb^2 \gamma_{j}(z_1)  \\
	 	& - b_{j(q)}(z_1) b_{j(q)}(z_2)  \beta_{j(q)}(z_1) \lb \rd_{jq}^\star \D_{j(q)}\inv(z_1) \D_{j(q)}\inv(z_2)
	 	\rd_{jq} \rb^2 \gamma_{j(q)}(z_1) \Big]. \\
	 \end{align*}
	 In order to find appropriate estimates for these term, we need some preliminaries. 
	 Note that we have similarly to \cite[(7.44)]{diss},
	 \begin{align} \label{bound_b}
	    \sup_{n\in\N, z\in\mathcal{C}_n} \max \lb |b_j(z)|, |b_{j(q)}(z)| \rb \lesssim 1. 
	 \end{align}
	 Similarly to Lemma 7.7.4 in 
 \cite{diss}, we obtain  from \eqref{bound_quad_form} the following lemma via induction. 
	 	\begin{lemma} \label{h1a}
		Let $j,m\in\N_0$, $ \alpha \geq 2$ and $\mathbf{A}_l$, $l\in\{1,\ldots,m+1\}$ be $p \times p$ (random) matrices independent of $\rd_j$ which obey for any $\tilde{\alpha} \geq 2$
		\begin{align} \label{cond1_h1a}
			\E || \mathbf{A}_l ||^{\tilde{\alpha}} < \infty, ~ l\in \{1, \ldots, m+1\}. 
		\end{align}
		Then, it holds
		\begin{align*}
			\E \Big| \Big( \prod\limits_{k=1}^m \rd_j^\star \mathbf{A}_k \rd_j \Big)
		\lb \rd_j^\star \mathbf{A}_{m+1} \rd_j - n\inv \tr \T \mathbf{A}_{m+1} \rb \Big|^\alpha
		\lesssim n^{-((\alpha/2) \wedge 1.5)} .
		\end{align*}
		If additionally  for any $l\in \{1, \ldots, m+1\}$, $\tilde{\alpha} \geq 2$
		\begin{align} \label{cond2_h1a}
			\E \left[  \tr \mathbf{A} \mathbf{A}_l^\star  \right]^{\tilde{q}} < \infty ,
		\end{align}
		holds true, then we have
		\begin{align*}
				\E \Big| \Big( \prod\limits_{k=1}^m \rd_j^\star \mathbf{A}_k \rd_j \Big)
		\lb \rd_j^\star \mathbf{A}_{m+1} \rd_j - n\inv \tr \T \mathbf{A}_{m+1} \rb \Big|^\alpha
		\lesssim 
		 n^{-(\alpha \wedge 2.5)}.
		\end{align*}
	\end{lemma}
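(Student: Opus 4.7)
The plan is to prove both inequalities by induction on $m$, with the quadratic-form concentration bound \eqref{bound_quad_form} as the workhorse. For the base case $m=0$, the left-hand side equals $\E|\rd_j^\star \bfA_1 \rd_j - n\inv \tr \T \bfA_1|^\alpha = n^{-\alpha}\, \E|\bfx_j^\star \bfB \bfx_j - \tr \bfB|^\alpha$ with $\bfB = \T\sq \bfA_1 \T\sq$. Conditioning on $\bfA_1$ and invoking \eqref{bound_quad_form}, the operator-norm inequality $\|\bfB\| \leq \|\T\|\,\|\bfA_1\|$ yields the first bound after taking outer expectation via \eqref{cond1_h1a}, while $\tr(\bfB\bfB^\star) \leq \|\T\|^2 \tr(\bfA_1 \bfA_1^\star)$ together with \eqref{cond2_h1a} yields the sharper second bound.

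For the inductive step with $m \geq 1$, I split each factor as $\rd_j^\star \bfA_k \rd_j = Z_k + T_k$ with centred part $Z_k = \rd_j^\star \bfA_k \rd_j - n\inv \tr \T \bfA_k$ and deterministic part $T_k = n\inv \tr \T \bfA_k$, and expand the full product into $2^m$ summands indexed by $S \subseteq \{1,\ldots,m\}$, each of the form $\bigl(\prod_{k \in S} Z_k\bigr)\bigl(\prod_{k \notin S} T_k\bigr)\,Y$ with $Y = \rd_j^\star \bfA_{m+1} \rd_j - n\inv \tr \T \bfA_{m+1}$. The summand $S = \emptyset$ reduces to the base case applied to $Y$ once Hölder's inequality extracts the deterministic factor $\prod_k T_k$, which has $L^{\tilde\alpha}$ moments of every order thanks to \eqref{cond1_h1a} combined with the trivial bound $|T_k| \lesssim \|\bfA_k\|$. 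For $S \neq \emptyset$, I apply Hölder's inequality with an exponent $s > 1$ arbitrarily close to $1$ on $Y$ and conjugate exponents distributed over the extra centred factors $\{Z_k\}_{k \in S}$; \eqref{bound_quad_form} then yields $(\E|Y|^{\alpha s})^{1/s} \lesssim n^{-((\alpha s/2) \wedge 1.5)/s}$, which converges to the target rate as $s \searrow 1$, while each $(\E|Z_k|^{\alpha s_k})^{1/s_k}$ is controlled by a further application of \eqref{bound_quad_form} and made arbitrarily small using the arbitrarily large moments granted by \eqref{cond1_h1a}.

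The principal obstacle is matching the target exponent $((\alpha/2) \wedge 1.5)$ exactly, rather than losing a factor $n^{\varepsilon}$ for some arbitrary $\varepsilon > 0$: the necessary cancellation is that \eqref{bound_quad_form} provides a strictly improved rate at the elevated moment exponent $\alpha s$, and the outer $1/s$ power produced by Hölder's inequality neutralises the elevation as $s \searrow 1$, while \eqref{cond1_h1a} ensures that the auxiliary Hölder factors on the $Z_k$ stay bounded uniformly in $s$. The sharpened bound under the additional hypothesis \eqref{cond2_h1a} follows along identical lines, using the Frobenius branch of \eqref{bound_quad_form} for the dominating centred factor $Y$ in each summand.
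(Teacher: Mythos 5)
Your overall plan — induction on $m$, splitting each factor $\rd_j^\star\bfA_k\rd_j = Z_k + T_k$, expanding into $2^m$ summands, and applying \eqref{bound_quad_form} — is the right skeleton and is essentially what the paper invokes by referring to Lemma~7.7.4 of the cited thesis. The base case $m=0$ is handled correctly. However, the rate bookkeeping in your inductive step is flawed at the precise point you flag as ``the principal obstacle'', and the fix you propose does not work as stated.

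For the $S = \emptyset$ summand: you should \emph{condition} on $\{\bfA_l\}_l$ rather than Hölder-separate $\prod_k T_k$ from $Y$. Conditionally, $T_k$ is a scalar with $|T_k|\lesssim\|\bfA_k\|$, so $\E'\bigl|\prod_k T_k\cdot Y\bigr|^\alpha = \bigl|\prod_k T_k\bigr|^\alpha\,\E'|Y|^\alpha$ and $\E'|Y|^\alpha$ enjoys the full rate $n^{-(\alpha/2\wedge 1.5)}$ from \eqref{bound_quad_form}; Hölder is then applied only to the resulting product of $\|\bfA_l\|$-norms. If instead you Hölder-separate $\prod_k T_k$ from $Y$ with conjugate exponents $(p,q)$, $q>1$, then for $\alpha\ge 3$ you obtain $(\E|Y|^{\alpha q})^{1/q}\lesssim n^{-1.5/q}$, a genuine loss of $n^{1.5(q-1)/q}$; the $T_k$'s decay in $n$ at no rate whatsoever and cannot compensate. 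The $\eta_n$ factors in \eqref{bound_quad_form} do not rescue this since $\eta_n\to 0$ may be arbitrarily slow.

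For the $S\neq\emptyset$ summands: the framing ``converges to the target rate as $s\searrow 1$'' while the $Z_k$ factors are ``made arbitrarily small'' / ``stay bounded uniformly in $s$'' is internally inconsistent and does not give the result for $\alpha\ge 3$, since for any fixed $s>1$ the $Y$ factor alone yields only $n^{-1.5/s}$ and you cannot take a limit $s\to 1$ inside a finite-$n$ estimate. What actually closes the argument is \emph{conjugacy}: for any admissible exponents $(p_k)_{k\in S}$ and $q$ with $\sum_{k\in S} p_k^{-1}+q^{-1}=1$, one has $(\E|Z_k|^{\alpha p_k})^{1/p_k}\lesssim n^{-\min(1.5/p_k,\,\alpha/2)}$ and $(\E|Y|^{\alpha q})^{1/q}\lesssim n^{-\min(1.5/q,\,\alpha/2)}$. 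When $\alpha\ge 3$ each minimum equals the $1.5/p_k$ (resp.\ $1.5/q$) term, and the total exponent is $1.5\bigl(\sum_k p_k^{-1}+q^{-1}\bigr)=1.5$ \emph{exactly}; when $2\le\alpha<3$ a short case analysis gives total exponent $\ge\alpha/2$. So the $Z_k$ factors are not small and not negligible — each supplies a nonzero piece of decay, and by conjugacy the pieces sum to precisely the target rate. The Frobenius branch then gives the sharper bound under \eqref{cond2_h1a} by the same exponent-conservation count, as you indicate.
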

	For the  applications of Lemma \ref{h1a}
 in the following discussion we note that $\D_j\inv(z), \tilde{\D}_{j(q)}^-(z)$ and similarly defined matrices satisfy condition \eqref{cond1_h1a} uniformly over $z\in\mathcal{C}_n, n\in\N$, which can be shown similarly to Lemma 7.7.3 in \cite{diss}. 
	Furthermore, choices like $\mathbf{A}_l = \D_j\inv(z) -  \tilde{\D}_{j(q)}^-(z)$ 
 satisfy condition \eqref{cond2_h1a} combining the observation above with ideas from the proof of Lemma \ref{lem_tr_B}. 
	To begin with, we make use of the  decomposition $G_{n11} = G_{n111}   + G_{n112}$, where
	\begin{align*}
	G_{n111} 
	 	= &  \sum\limits_{j=1}^{n} ( \E_j - \E_{j - 1} ) 
	 	b_{j}(z_1) b_{j}(z_2) \Big\{  \lb \rd_j^\star \D_{j}\inv(z_1) \D_{j}\inv(z_2)
	 	\rd_j \rb^2  - \lb \rd_{jq}^\star \D_{j(q)}\inv(z_1) \D_{j(q)}\inv(z_2)
	 	\rd_{jq} \rb^2 \Big\} 
	 	\\ 
   = &  \sum\limits_{j=1}^{n} ( \E_j - \E_{j - 1} ) 
	 	b_{j}(z_1) b_{j}(z_2) \Big\{  \lb \rd_j^\star \D_{j}\inv(z_1) \D_{j}\inv(z_2)
	 	\rd_j \rb^2  -  \lb \rd_{j}^\star \tilde{\D}_{j(q)}^-(z_1) \tilde{\D}_{j(q)}^-(z_2)
	 	\rd_{j} \rb^2 \Big\}, 
	 	\\   
	 	G_{n112} 
	 	= & \sum\limits_{j=1}^{n}  ( \E_j - \E_{j - 1} ) \lb b_j(z_1) b_j(z_2) - b_{j(q)}(z_1) b_{j(q)}(z_2) \rb  \lb \rd_{jq}^\star \D_{j(q)}\inv(z_1) \D_{j(q)}\inv(z_2)
	 	\rd_{jq} \rb^2.
	 	\end{align*}
   In order to estimate these terms, we need further preparations. 
	Recall that we are able to bound the moments of $||\D_{j}\inv(z)||$ independent of $n\in\N,z\in\mathcal{C}_n$. Furthermore, let $\eta_r > \limsup_{n\to\infty} ||\bfSigma|| (  1+ \sqrt{y})^2$ and $0<\eta_l < \liminf_{n\to\infty} \lambda_p ( \bfSigma) I_{(0,1)}(y) ( 1- \sqrt{y})^2$. 
	Then, we observe for $z\in\mathcal{C}_n$ 
	\begin{align}
		|| \D\inv (z) ||
		\lesssim & 1
		+ n^{3/2} \varepsilon_n\inv I \{ ||\hat\bfSigma || \geq \eta_{r} \textnormal{ or }
		\lambda_{p}(\hat\bfSigma) \leq \eta_{l} \} \nonumber \\
		\leq & 1 
		+ n^3 I \{ ||\hat\bfSigma || \geq \eta_{r} \textnormal{ or }
		\lambda_{p}(\hat\bfSigma) \leq \eta_{l} \}, \label{d_inv_norm}
	\end{align}
	where we used the fact that $\varepsilon_n \geq n^{-\alpha}$ for some $\alpha \in (0,1)$. From \cite[(9.7.8)-(9.7.9)]{bai2004} we know that for any $m>0$
	 \begin{align}
	  & \PR \{ ||\D_j(0) || \geq \eta_{r} \textnormal{ or }
		\lambda_{p}(\D_j(0)) \leq \eta_{l} \} = o \lb n^{-m} \rb, \nonumber \\
		& \PR  \{ ||\hat\bfSigma || \geq \eta_{r} \textnormal{ or }
		\lambda_{p}(\hat\bfSigma) \leq \eta_{l} \}
		= o \lb n^{-m} \rb. \label{ind_bound}
	 \end{align}
Observing \eqref{bound_b}, we have for the terms appearing in $G_{n111}$
\begin{align*}
 	& \E \left| ( \E_j - \E_{j - 1} )
		b_{j}(z_1) b_{j}(z_2) \Big\{  \lb \rd_j^\star \D_{j}\inv(z_1) \D_{j}\inv(z_2)
	 	\rd_j \rb^2  -  \lb \rd_{j}^\star \tilde{\D}_{j(q)}^-(z_1) \tilde{\D}_{j(q)}^-(z_2)
	 	\rd_{j} \rb^2 \Big\}  \right|^{2}  \\
   \lesssim & \E \left| ( \E_j - \E_{j - 1} )
		\Big\{  \lb \rd_j^\star \D_{j}\inv(z_1) \D_{j}\inv(z_2)
	 	\rd_j \rb^2  -  \lb \rd_{j}^\star \tilde{\D}_{j(q)}^-(z_1) \tilde{\D}_{j(q)}^-(z_2)
	 	\rd_{j} \rb^2 \Big\}  \right|^{2} \\ 
   = &  \E \Big| ( \E_j - \E_{j - 1} )
		 \Big\{  \rd_j^\star \lb \D_{j}\inv(z_1) \D_{j}\inv(z_2)
	 	-  \tilde{\D}_{j(q)}^-(z_1) \tilde{\D}_{j(q)}^-(z_2) \rb 
	 	\rd_{j} \\ 
   & \times 
   \rd_j^\star \lb \D_{j}\inv(z_1) \D_{j}\inv(z_2)
	 	+  \tilde{\D}_{j(q)}^-(z_1) \tilde{\D}_{j(q)}^-(z_2) \rb 
	 	\rd_{j} \Big\}  \Big|^{2} \\ 
   \lesssim &  \E \Big| ( \E_j - \E_{j - 1} )
		 \Big\{ \Big[ \rd_j^\star \lb \D_{j}\inv(z_1) \D_{j}\inv(z_2)
	 	-  \tilde{\D}_{j(q)}^-(z_1) \tilde{\D}_{j(q)}^-(z_2) \rb 
	 	\rd_{j} 
   \\ & - n \inv \tr \T \lb \D_{j}\inv(z_1) \D_{j}\inv(z_2)
	 	-  \tilde{\D}_{j(q)}^-(z_1) \tilde{\D}_{j(q)}^-(z_2) \rb \Big] 
   \\ & \times 
   \rd_j^\star \lb \D_{j}\inv(z_1) \D_{j}\inv(z_2)
	 	+  \tilde{\D}_{j(q)}^-(z_1) \tilde{\D}_{j(q)}^-(z_2) \rb 
	 	\rd_{j} \Big\}  \Big|^{2} \\ 
   & + \E \Big| ( \E_j - \E_{j - 1} )
		  n \inv \tr \T \lb \D_{j}\inv(z_1) \D_{j}\inv(z_2)
	 	-  \tilde{\D}_{j(q)}^-(z_1) \tilde{\D}_{j(q)}^-(z_2) \rb 
   \\ & ~~~~~~ \times 
   \rd_j^\star \lb \D_{j}\inv(z_1) \D_{j}\inv(z_2)
	 	+  \tilde{\D}_{j(q)}^-(z_1) \tilde{\D}_{j(q)}^-(z_2) \rb 
	 	\rd_{j} \Big\}  \Big|^{2} \\ 
   \lesssim & \E \Big| ( \E_j - \E_{j - 1} )
		 \Big\{ \Big[ \rd_j^\star \lb \D_{j}\inv(z_1) \D_{j}\inv(z_2)
	 	-  \tilde{\D}_{j(q)}^-(z_1) \tilde{\D}_{j(q)}^-(z_2) \rb 
	 	\rd_{j} 
   \\ & - n \inv \tr \T \lb \D_{j}\inv(z_1) \D_{j}\inv(z_2)
	 	-  \tilde{\D}_{j(q)}^-(z_1) \tilde{\D}_{j(q)}^-(z_2) \rb \Big] \\ 
   &  ~~~~~~  \times 
   \Big[ \rd_j^\star \lb \D_{j}\inv(z_1) \D_{j}\inv(z_2)
	 	+  \tilde{\D}_{j(q)}^-(z_1) \tilde{\D}_{j(q)}^-(z_2) \rb 
	 	\rd_{j}  \\
   & - n\inv \tr \T  \lb \D_{j}\inv(z_1) \D_{j}\inv(z_2)
	 	+  \tilde{\D}_{j(q)}^-(z_1) \tilde{\D}_{j(q)}^-(z_2) \rb \Big] \Big\}  \Big|^{2} \\ 
   + & \E \Big| ( \E_j - \E_{j - 1} )
		 \Big\{ \Big[ \rd_j^\star \lb \D_{j}\inv(z_1) \D_{j}\inv(z_2)
	 	-  \tilde{\D}_{j(q)}^-(z_1) \tilde{\D}_{j(q)}^-(z_2) \rb 
	 	\rd_{j} 
   \\ & - n \inv \tr \T \lb \D_{j}\inv(z_1) \D_{j}\inv(z_2)
	 	-  \tilde{\D}_{j(q)}^-(z_1) \tilde{\D}_{j(q)}^-(z_2) \rb \Big] 
   \\ 
   &   ~~~~~~ \times 
   n\inv \tr \T  \lb \D_{j}\inv(z_1) \D_{j}\inv(z_2)
	 	+  \tilde{\D}_{j(q)}^-(z_1) \tilde{\D}_{j(q)}^-(z_2) \rb  \Big\}  \Big|^{2} \\ 
   & + \E \Big| ( \E_j - \E_{j - 1} )
		  n \inv \tr \T \lb \D_{j}\inv(z_1) \D_{j}\inv(z_2)
	 	-  \tilde{\D}_{j(q)}^-(z_1) \tilde{\D}_{j(q)}^-(z_2) \rb 
   \\ & ~~~~~~ \times 
   \Big[ \rd_j^\star \lb \D_{j}\inv(z_1) \D_{j}\inv(z_2)
	 	+  \tilde{\D}_{j(q)}^-(z_1) \tilde{\D}_{j(q)}^-(z_2) \rb 
	 	\rd_{j}  \\ 
   & ~~~~~~ ~~~~~~ - n\inv \tr \T \lb \D_{j}\inv(z_1) \D_{j}\inv(z_2)
	 	+  \tilde{\D}_{j(q)}^-(z_1) \tilde{\D}_{j(q)}^-(z_2) \rb  \Big]  \Big\}  \Big|^{2} \\ 
   \lesssim & n^{-2},
\end{align*}
	 where we used Lemma \ref{h1a}, Hölder's inequality, \eqref{d_inv_norm} and \eqref{ind_bound}. 
  
  Noting that $| b_j(z_1) b_j(z_2) - b_{j(q)}(z_1) b_{j(q)}(z_2) | \lesssim n\inv$, 
a similar estimate can be shown for the terms in $G_{n112}$, that is, 
	 \begin{align*}
	     \E \left| ( \E_j - \E_{j-1} ) \Big[ \lb b_j(z_1) b_j(z_2) - b_{j(q)}(z_1) b_{j(q)}(z_2) \rb  \lb \rd_{jq}^\star \D_{j(q)}\inv(z_1) \D_{j(q)}\inv(z_2)
	 	\rd_{jq} \rb^2
	 	\Big] \right|^2 \lesssim n^{-2}. 
	 \end{align*}
Combining these estimates, we obtain $  \E | \sqrt{n} G_{n11} |^2 \lesssim 1.$
	Regarding $G_{n12}$, we proceed with the decomposition $G_{n12}=G_{n121}+G_{n122}+G_{n123},$ where
	\begin{align*}
	   G_{n121}
	 	= &   \sum\limits_{j=1}^{n} ( \E_j - \E_{j - 1} )
	 	\Big[ 
	 	\Big\{ b_{j}(z_2) \beta_{j}(z_1) \beta_{j}(z_2)  - b_{j(q)}(z_2) \beta_{j(q)}(z_1) \beta_{j(q)}(z_2) \Big\}  \\
   & ~~~~~~~~~~~~~~~~~~~~~
   \times \lb \rd_j^\star \D_{j}\inv(z_1) \D_{j}\inv(z_2)
	 	\rd_j \rb^2 \gamma_{j}(z_2) \Big], \\ 
	 	G_{n122} = &     \sum\limits_{j=1}^{n} ( \E_j - \E_{j - 1} )
	 	\Bigg[  b_{j(q)}(z_2) \beta_{j(q)}(z_1) \beta_{j(q)}(z_2) \Big\{   \lb \rd_j^\star \D_{j}\inv(z_1) \D_{j}\inv(z_2)
	 	\rd_j \rb^2  \\
   & - \lb \rd_{jq}^\star \D_{j(q)}\inv(z_1) \D_{j(q)}\inv(z_2)
	 	\rd_{jq} \rb^2 \Big\} \gamma_{j(q)}(z_2) \Bigg], \\
	 	G_{n123} & =    \sum\limits_{j=1}^{n} ( \E_j - \E_{j - 1} )
	 	\Bigg[ b_{j(q)}(z_2) \beta_{j(q)}(z_1) \beta_{j(q)}(z_2)  
	 	 \lb \rd_j^\star \D_{j}\inv(z_1) \D_{j}\inv(z_2)
	 	\rd_j \rb^2 \left\{ \gamma_j(z_2) - \gamma_{j(q)}(z_2) \right\} 
	 	\Bigg]
	 	.
	\end{align*}
	Note that by combining \eqref{d_inv_norm} with  $| \rd_j |^2 \leq n$, we obtain
	\begin{align}
		| \beta_{j}(z) | = &  | 1 - \rd_j^\star \D\inv(z) \rd_j | \leq 1 + |\rd_j|^2 || \D\inv(z) || 
		\nonumber \\
		\lesssim & 1 +  | \rd_j|^2+ n^4  I \{ ||\hat\bfSigma || \geq \eta_{r} \textnormal{ or }
		\lambda_{p}(\hat\bfSigma) \leq \eta_{l} \}. \label{beta}
	\end{align}
	Similarly to these bounds, 
	we get for any $m\geq 1$
	\begin{align*}
	| \gamma_{j}(z) | 
	= & | \rd_j^\star \D_{j}\inv(z) \rd_j - n\inv \E [ \tr \T \D_{j}\inv(z) ]|    
	\lesssim  |\rd_j|^2 || \D_{j}\inv(z) || +    \E || \D_{j}\inv(z) || \nonumber \\
		\lesssim &   |\rd_j|^2
		 +     | \rd_j|^2 n^{3/2} \varepsilon_n\inv 
		 I \{ ||\D_j(0) || \geq \eta_{r} \textnormal{ or }
		\lambda_{p}(\D_j(0)) \leq \eta_{l} \} \nonumber \\
		& +  | \rd_j|^2 n^{3/2} \varepsilon_n\inv 
		 \mathbb{P} \{ ||\D_j(0) || \geq \eta_{r} \textnormal{ or }
		\lambda_{p}(\D_j(0)) \leq \eta_{l} \} \nonumber \\
		\leq &  | \rd_j|^2 +    n^4 I \{ ||\D_j(0) || \geq \eta_{r} \textnormal{ or }
		\lambda_{p}(\D_j(0)) \leq \eta_{l} \}
		+  o\lb n^{-m} \rb, 
	\end{align*}
	where we used \eqref{ind_bound}. 
	Naturally, similar bounds can be shown for $\D_{(q)}\inv(z),\gamma_{j(q)}(z), \beta_{j(q)}(z).$ Combining these bounds with Lemma \ref{h1a}, we get $  \E | \sqrt{n} G_{n12} |^2 \lesssim 1.$
    In the same manner, the remaining terms can be bounded, and the details will be omitted for the sake of brevity.

\subsection{Proof of Theorem \ref{thm_bias} (uniform convergence of $M_{n,q}^{(2)}$)}
	\label{sec_bias}

Let $\su_n^0 = s_{\underline{F}^{y_n,H_n}}$ be the Stieltjes transform of $\underline{F}^{y_n,H_n}$, and, similarly, $\su_{nq}^0 = s_{\underline{F}^{(p-1)/n, H_{nq}}}.$
Here, $H_n = F^{\bfSigma}$ denotes the empirical spectral distribution of $\bfSigma$, and, similarly, we define $H_{nq} = F^{\bfSigma^{(-q)}}.$ Moreover,
we denote by  $\su_n=s_{F^{\hat{\underline{\bfSigma}}}}$ and $\su_{nq}=s_{F^{\hat{\underline{\bfSigma}}^{(-q)}}}$ the Stieltjes transforms of $F^{\hat{\underline{\bfSigma}}}$ and $F^{\hat{\underline{\bfSigma}}^{(-q)}}$, respectively. Observing  
		\begin{align*}
		\underline{s}_{n}(z)  &= s_{F^{ \underline{\mathbf{\Sigma}}}} (z) = - \frac{1 - y_{n}}{z} + y_{n} s_{n}(z) , \\ 
		\su_{n}^0 (z)  &= s_{\underline{{F}}^{y_n,H_n}(z)}
		= - \frac{1 - y_{n}}{z} + y_{n} s_{n}^0(z),
	\end{align*}
and using  analogous formulas for the Stieltjes transforms $\underline{s}_{nq}(z)$ and  $\underline{s}_{nq}^0(z)$, 
 we obtain 
\begin{align*}
    	M_{n,q}^{(2)} (z) 
	=&  \sqrt{n} \lb \E \left[ p s_{F^{\hat\bfSigma}}(z) - (p-1) s_{F^{\hat\bfSigma^{(-q)}}}(z) \right] - \lb 
	p \su_{n}^0(z) - (p - 1) \su_{nq}^0(z) \rb \rb \\
	&= n^{3/2} \lb \E [ \su_n(z) - \su_{nq}(z) ] - \lb \su_n^0(z) - \su_{nq}^0(z) \rb  \rb .
\end{align*}

Define 
\begin{align}
    R_n (z) & = - z - \frac{1}{\E [ \su_n(z) ] } + y_n \int \frac{\lambda dH_n(\lambda) }{1 + \lambda \E [ \su_n(z)] } 
    =  y_{n} n\inv \sum\limits_{j=1}^{n} \E [ \beta_{j}( z) d_{j} (z) ] \Big (\E [ \su_{n}(z) ]\Big ) \inv, \label{def_R}\\
	d_{j} (z)  & = - \mathbf{q}_{j}^\star \bfSigma_n\sq \D_{j}\inv(z) (  \E [ \su_{n}(z) ] \bfSigma + \mathbf{I} )\inv \bfSigma\sq \mathbf{q}_j 
 	 +  \frac{1}{p} \E \Big[ \tr ( \E [ \su_{n} (z) ] \bfSigma + \mathbf{I} )\inv \bfSigma \D\inv(z) \Big], \nonumber\\
	 \mathbf{q}_j & = \frac{1}{\sqrt{p}} \mathbf{x}_j,  \nonumber \\
    I_n(z) &  = y_n  \int \frac{\lambda^2 \su_{n}^0(z) dH_n(\lambda)}{( 1 + \lambda \E [\su_{n} (z) ] ) ( 1 + \lambda  \su_{n}^0(z) ) } = \frac{1}{n} \su_n^0(z) \tr \bfSigma \lb \bfI + \E [ \su_n(z) ] \bfSigma \rb \inv \bfSigma  \lb \bfI +  \su_n^0(z)  \bfSigma \rb \inv .
  \nonumber 
\end{align}
Here, the second equality in \eqref{def_R} follows from the proof of Lemma 6.3.6 in \cite{dornemann2021linear}. 
Similarly, we define
\begin{align*}
    R_{nq} (z) & = - z - \frac{1}{\E [ \su_{nq}(z) ] } + \frac{p}{n -1} \int \frac{\lambda dH_{nq}(\lambda) }{1 + \lambda \E [ \su_{nq}(z)] } 
    =  \frac{p - 1}{n} n\inv \sum\limits_{j=1}^{n} \E [ \beta_{j(q)}( z) d_{j(q)} (z) ] \Big (\E [ \su_{nq}(z) ]\Big ) \inv, \\
	d_{j(q)} (z)  & = - \mathbf{q}_{jq}^\star (\tilde{\bfSigma}^{(-q)})\sq \tilde{\D}_{j(q)}^{-}(z) (  \E [ \su_{nq}(z) ] \tilde{\bfSigma}^{(-q)} + \tilde{\mathbf{I}}^{(-q)} )^{-} (\tilde{\bfSigma}^{(-q)})\sq \mathbf{q}_{jq} 
 	 \\ & +  \frac{1}{p - 1} \E \Big[ \tr ( \E [ \su_{nq} (z) ] \tilde{\bfSigma}^{(-q)} + \tilde{\mathbf{I}}^{(-q)} )\inv \tilde{\bfSigma}^{(-q)} \tilde{\D}_{(q)}^{-}(z) \Big], \nonumber\\
	 \mathbf{q}_{jq} & = \frac{1}{\sqrt{p -1 }} \mathbf{x}_j, \nonumber \\
   I_{nq}(z) &  = \frac{p - 1}{n}  \int \frac{\lambda^2 \su_{nq}^0(z) dH_{nq}(\lambda)}{( 1 + \lambda \E [\su_{nq} (z) ] ) ( 1 + \lambda  \su_{nq}^0(z) ) } \\
   \nonumber & = \frac{1}{n} \su_{nq}^0(z) \tr \bfSigma^{(-q)} \lb \bfI + \E [ \su_{nq}(z) ] \bfSigma^{(-q)} \rb \inv \bfSigma  \lb \bfI +  \su_{nq}^0(z)  \bfSigma^{(-q)} \rb \inv .
\end{align*}

Using these definitions, we  obtain by a tedious but straightforward calculation    \citep[see also page 63-64 in][]{diss}
\begin{align*}
    M_{n,q}^{(2)} (z) & =n^{3/2} \Big\{ 
    \lb \E [ \su_n(z) ] - \su_n^0(z) \rb I_n(z) \E [ \su_n(z) ] 
    - \lb \E [ \su_{nq}(z) ] - \su_{nq}^0(z) \rb I_{nq}(z) \E [ \su_{nq}(z) ] 
    \\ & + R_n(z) \E[\su_n(z)] \su_n^0(z)
    - R_{nq}(z) \E[\su_{nq}(z)] \su_{nq}^0(z)
    \Big\} \\
    & = n^{3/2} \Big\{ \lb \E [ \su_n(z) - \su_{nq}(z) ] - \lb \su_n^0(z) - \su_{nq}^0(z) \rb  \rb I_n(z) \E [ \su_n(z) ] \\ & 
    + \lb \E [ \su_{nq}] - \su_{nq}^0(z) \rb \lb I_n(z) \E [ \su_n(z) ] - I_{nq}(z) \E [ \su_{nq}(z) ] \rb \\ & 
    + \lb R_n(z) \E [ \su_n(z) ] - R_{nq}(z)  \E[\su_{nq}(z)] \rb \su_n^0(z) 
    + R_{nq}(z) \E[\su_{nq}(z)] \lb \su_n^0(z) - \su_{nq}^0(z) \rb 
    \Big\} \\ 
    & = n^{3/2} \Big\{ \lb \E [ \su_n(z) - \su_{nq}(z) ] - \lb \su_n^0(z) - \su_{nq}^0(z) \rb  \rb I_n(z) \E [ \su_n(z) ] \\ & 
    + \lb \E [ \su_{nq}] - \su_{nq}^0(z) \rb \lb I_n(z) - I_{nq}(z) \rb \E [ \su_n(z) ] 
    \\ & + \lb \E [ \su_{nq}] - \su_{nq}^0(z) \rb I_{nq}(z) \lb \E [ \su_n(z) ] - \E [ \su_{nq}(z) ] \rb \\ & 
    + \lb R_n(z) \E [ \su_n(z) ] - R_{nq}(z)  \E[\su_{nq}(z)] \rb \su_n^0(z) 
    + R_{nq}(z) \E[\su_{nq}(z)] \lb \su_n^0(z) - \su_{nq}^0(z) \rb 
    \Big\} ,
\end{align*}
which implies
\begin{align*}
     M_{n,q}^{(2)} (z) & = 
     \frac{1}{1 - I_n(z) \E [ \su_n(z) ]} \Big\{ 
     n \lb \E [ \su_{nq} (z) ] - \su_{nq}^0(z) \rb \sqrt{n} \lb I_n(z) - I_{nq}(z) \rb \E [ \su_n(z) ] 
    \\ & + n \lb \E [ \su_{nq}] - \su_{nq}^0(z) \rb I_{nq}(z) \sqrt{n}\lb \E [ \su_n(z) ] - \E [ \su_{nq}(z) ] \rb \\ & 
    + n^{3/2} \lb R_n(z) \E [ \su_n(z) ] - R_{nq}(z)  \E[\su_{nq}(z)] \rb \su_n^0(z) 
    + n R_{nq}(z) \E[\su_{nq}(z)] \sqrt{n} \lb \su_n^0(z) - \su_{nq}^0(z) \rb 
     \Big\} .
\end{align*}
Using similar arguments as given in the derivation of formula (9.11.4)
in \cite{bai2004} and the  results of page 50 in  \cite{diss} 
yields  the following uniform convergence results 
\begin{align}
    & \E [ \su_n(z) ] \to \su(z), ~  \E [ \su_{nq}(z) ] \to \su(z), ~ \su_n^0(z) \to \su(z), ~ \su_{nq}^0(z) \to \su(z), \nonumber \\ 
    & I_n(z) \to I(z) , ~ I_{nq}(z) \to I(z), \nonumber \\
    & n R_{nq}(z) \E[\su_{nq}(z)] \to  
    \begin{cases}
		 \frac{y  \int \frac{\su^2(z)\lambda^2}{(t \su(z) \lambda + 1)^3 } dH(\lambda) }
			{1 -  y  \int \frac{\su^2(z)\lambda^2}{( t \su(z) \lambda + 1 )^2}  dH(\lambda)} & 
		 \textnormal{ for the real case,} \\
		0& \textnormal{ for the complex case,}
		\end{cases} \nonumber \\ 
    & n \lb \E [ \su_{nq} (z) ] - \su_{nq}^0(z) \rb \to 
    \begin{cases} 
	\frac{ y  \int \frac{\su^3(z)\lambda^2}{(t \su(z) \lambda + 1)^3 } dH(\lambda) }
			{\lb 1 -  y  \int \frac{\su^2(z)\lambda^2}{( t \su(z) \lambda + 1 )^2}  dH(\lambda) \rb^2} 
			& 
		 \textnormal{ for the real case,} \\
		0& \textnormal{ for the complex case,}
			\end{cases}  
			\label{conv_M2}
\end{align}
 as $n\to\infty$, where we use the notation 
\begin{align*}
    I(z) & =  y  \int \frac{\lambda^2 \su(z) dH(\lambda)}{( 1 + \lambda \su(z) )^2 }. 
\end{align*}
Thus, it is left to analyze the asymptotic behaviour of
\begin{align}
    &\sqrt{n} \lb I_n(z) - I_{nq}(z) \rb , \label{a1} \\
    &\sqrt{n} \lb \E [ \su_n(z) ] - \E [ \su_{nq} (z) ] \rb , \label{a2} \\
    & \sqrt{n} \lb \su_n^0(z) - \su_{nq}^0(z) \rb , \label{a3} \\ 
    & n^{3/2} \lb R_n(z) \E [ \su_n(z)] - R_{nq}(z) \E [ \su_{nq}(z)] \rb.  \label{a4}
\end{align}
Using \eqref{conv_M2}, we note that 
\begin{align*}
    & \sqrt{n} \lb \E [ \su_n(z) ] - \E [ \su_{nq} (z) ] \rb 
    \\ & = \sqrt{n} \lb \su_n^0(z) - \su_{nq}^0(z) \rb + \sqrt{n} \lb  \E [ \su_n(z) ] - \su_n^0(z) \rb 
    - \sqrt{n} \lb  \E [ \su_{nq}(z) ] - \su_{nq} ^0(z) \rb \\ 
    & = \sqrt{n} \lb \su_n^0(z) - \su_{nq}^0(z) \rb + o(1),
\end{align*}
that is, \eqref{a2} and \eqref{a3} share the same asymptotic behaviour. 
Thus, it is left to investigate \eqref{a1}, \eqref{a3} and  \eqref{a4}. 

\paragraph*{Analysis of the term 
\eqref{a3}:} 
Using \eqref{repl_a47}, we have
\begin{align*}
    & \sqrt{n} \lb \su_n^0(z) - \su_{nq}^0(z) \rb
     =  \frac{1}{- z + y_n \int \frac{\lambda}{1+\lambda \su_n^0(z) } dH_n(\lambda)  }
    - \frac{1}{- z + \frac{p - 1}{n} \int \frac{\lambda}{1+\lambda \su_{nq}^0(z) } dH_{nq}(\lambda)  } \\
    & = \frac{1}{\sqrt{n}}  \su_n^0(z)  \su_{nq}^0(z)   \tr \lb \Tq \lb \Iq + \su_{nq}^0(z) \Tq \rb^- 
    - \T \lb \bfI + \su_{n}^0(z) \T \rb\inv \rb \\ 
    & = \frac{1}{\sqrt{n}}  \su_n^0(z)  \su_{nq}^0(z)   \tr  \T \lb \lb \Iq + \su_{nq}^0(z) \Tq \rb^- 
    -  \lb \bfI + \su_{n}^0(z) \T \rb\inv \rb \\ & 
    - \frac{1}{\sqrt{n}}  \su_n^0(z)  \su_{nq}^0(z) \lb \bfSigma  \lb \Iq + \su_{nq}^0(z) \Tq \rb^-  \rb_{qq}\\ 
    & = \frac{1}{\sqrt{n}}  \su_n^0(z)  \su_{nq}^0(z)   \tr  \T \lb \lb \Iq + \su_{nq}^0(z) \Tq \rb^- 
    -  \lb \bfI + \su_{n}^0(z) \T \rb\inv \rb + o(1) \\
        & = \frac{1}{\sqrt{n}}  \su_n^0(z)  \su_{nq}^0(z)   \tr  \T  \lb \Iq + \su_{n}^0(z) \Tq \rb^- 
        \lb  \lb \su_n^0(z) - \su_{nq}^0(z) \rb \Tq + \su_n^0(z) \bfSigma^{(q,q)}  \rb 
      \lb \bfI + \su_{n}^0(z) \T \rb\inv  \\
      & + \frac{1}{\sqrt{n}}  \su_n^0(z)  \su_{nq}^0(z) \tr \bfSigma \lb  \lb \bfI + \su_{n}^0(z) \T \rb\inv \rb^{(\cdot, q)}
      + o(1) \\
      & = \frac{1}{\sqrt{n}}  \su_n^0(z)  \su_{nq}^0(z)   \lb \su_n^0(z) - \su_{nq}^0(z) \rb  \tr  \T  \lb \Iq + \su_{n}^0(z) \Tq \rb^- 
        \Tq  
      \lb \bfI + \su_{n}^0(z) \T \rb\inv  \\
      & + \frac{1}{\sqrt{n}}  \lb \su_n^0(z) \rb^2  \su_{nq}^0(z)   \tr  \T  \lb \Iq + \su_{n}^0(z) \Tq \rb^- 
          \bfSigma^{(q,q)}  
      \lb \bfI + \su_{n}^0(z) \T \rb\inv
      + o(1) \\
      & = \frac{1}{\sqrt{n}}  \su_n^0(z)  \su_{nq}^0(z)   \lb \su_n^0(z) - \su_{nq}^0(z) \rb  \tr  \T  \lb \Iq + \su_{n}^0(z) \Tq \rb^- 
        \Tq  
      \lb \bfI + \su_{n}^0(z) \T \rb\inv  
      + o(1) .
\end{align*}
Note that 
\begin{align*}
    \frac{\su_n^0(z)  \su_{nq}^0(z)}{n} \tr  \T  \lb \Iq + \su_{n}^0(z) \Tq \rb^- 
        \Tq  
      \lb \bfI + \su_{n}^0(z) \T \rb\inv  
    = a(z,z) + o(1) ~,
\end{align*}
where the term $a(z,z)$ is defined in \eqref{def_a}. By  Lemma \ref{lem_bound_an} we have  $|a(z,z)|<1$, which 
implies $\sqrt{n} (\su_n^0(z) - \su_{nq}^0(z) ) = o(1) $. 

\paragraph*{Analysis of the term 
\eqref{a1}:}
It holds uniformly with respect to  $z\in\mathcal{C}_n,$
\begin{align}
     & \sqrt{n} ( I_n(z) - I_{nq} (z) ) 
     =  \frac{1}{\sqrt{n}} \lb \su_n^0(z) - \su_{nq}^0(z) \rb   \tr \bfSigma \lb \bfI + \E [ \su_n(z) ] \bfSigma \rb \inv \bfSigma  \lb \bfI +  \su_n^0(z)  \bfSigma \rb \inv \label{I1}
    \\ & + \frac{1}{\sqrt{n}} \su_{nq}^0(z) \Big\{   \tr \bfSigma \lb \bfI + \E [ \su_n(z) ] \bfSigma \rb \inv \bfSigma  \lb \bfI +  \su_n^0(z)  \bfSigma \rb \inv \nonumber
    \\ & -  \tr \bfSigma^{(-q)} \lb \bfI + \E [ \su_{nq}(z) ] \bfSigma^{(-q)} \rb \inv \bfSigma  \lb \bfI +  \su_{nq}^0(z)  \bfSigma^{(-q)} \rb \inv \Big\} \label{I2} \\
   &  = o(1).
   \nonumber 
\end{align}
For the first term \eqref{I1}, we used the previous result for \eqref{a3} and the fact
$$
\tr \bfSigma \lb \bfI + \E [ \su_n(z) ] \bfSigma \rb \inv \bfSigma  \lb \bfI +  \su_n^0(z)  \bfSigma \rb \inv
\to y \int \frac{\lambda^2}{(1 + \su(z) \lambda)^2} dH(\lambda).
$$
 For the second term, one can proceed similarly to the analysis of \eqref{a3}.

\paragraph*{Analysis of the term \eqref{a4}:}
Using \eqref{eq_sher_mor}
and the representation  		
	\begin{align} \label{beta_quer}
		\beta_{j}(z) = \overline{\beta}_{j}(z) - \overline{\beta}_{j}^2(z) \hat{\gamma}_{j}(z) 
	+ \overline{\beta}_{j}^2(z) \beta_{j}(z) \hat{\gamma}_{j}^2(z),
 	\end{align} 
 	as well as similar formulas for $\beta_{j(q)}(z)$ and $\D\inv_{(q)}(z),$ we obtain
    \begin{align*}
         & n^{3/2} \lb R_n(z) \E [ \su_n(z) ] - R_{nq}(z) \E [ \su_{nq}(z) ] \rb
         \\ &  = - \sqrt{n}   \sum\limits_{j=1}^{n} \E \Bigg[ y_n \beta_{j}(z) \Big\{ \mathbf{q}_{j}^\star \bfSigma\sq \D_{j}\inv(z) 
	 ( \E [ \tilde{\su}_{n}(z) ] \bfSigma + \mathbf{I} )\inv \bfSigma\sq \mathbf{q}_j  \\
	 & - \frac{1}{p} \E \Big [ \tr ( \E [ \tilde{\su}_{n} (z) ] \bfSigma + \mathbf{I} )\inv \bfSigma \D_{j}\inv(z) \Big]  \Big\} 
	 \\ & - \frac{p - 1}{n}  \beta_{j(q)}(z) \Big\{ \mathbf{q}_{jq}^\star (\tilde{\bfSigma}^{(-q)})\sq \tilde{\D}_{j(q)}^{-}(z) (  \E [ \su_{nq}(z) ] \tilde{\bfSigma}^{(-q)} + \tilde{\mathbf{I}}^{(-q)} )^{-} (\tilde{\bfSigma}^{(-q)})\sq \mathbf{q}_{j(q)} 
 	 \\ & -  \frac{1}{p - 1} \E \Big[ \tr ( \E [ \su_{nq} (z) ] \tilde{\bfSigma}^{(-q)} + \tilde{\mathbf{I}}^{(-q)} )\inv \tilde{\bfSigma}^{(-q)} \tilde{\D}_{j(q)}^{-}(z) \Big] \Big\}  \Bigg]  \\
	 & +  \frac{1}{\sqrt{n}} \sum\limits_{j=1}^{n}  \E \Big[ \beta_{j}(z) \tr ( \E [ \tilde{\su}_{n} (z) ] \bfSigma + \mathbf{I} )\inv \bfSigma \E \left[ \D\inv(z) - \D_{j}\inv(z)  \right] 
	 \\ & -
	 \beta_{j(q)}(z) \tr ( \E [ \tilde{\su}_{nq} (z) ] \tilde{\bfSigma}^{(-q)} + \tilde{\mathbf{I}}^{(-q)} )^{-} \tilde{\bfSigma}^{(-q)} \E \left[ \tilde{\D}_{(q)}^{-}(z) - \tilde{\D}_{j(q)}^- (z)  \right] \Big]  \\
	 = & T_{n,1}(z) 
	 +  T_{n,2}(z) + o(1) 
	\end{align*}
      uniformly with respect to $z\in\mathcal{C}_n$,	where the terms $T_{n,1}$ and $T_{n,2} $ are defined by 
      \begin{align}
          T_{n,1}(z) & =  \sqrt{n}   \sum\limits_{j=1}^{n} \E \Bigg[ y_n \overline{\beta}_{j}^2(z) \Big\{ \mathbf{q}_{j}^\star \bfSigma\sq \D_{j}\inv(z) 
	 ( \E [ \tilde{\su}_{n}(z) ] \bfSigma + \mathbf{I} )\inv \bfSigma\sq \mathbf{q}_j  \nonumber \\
	 & - \frac{1}{p} \E \Big [ \tr ( \E [ \tilde{\su}_{n} (z) ] \bfSigma + \mathbf{I} )\inv \bfSigma \D_{j}\inv(z) \Big] \hat{\gamma}_j(z) \Big\}  \nonumber 
	 \\ & - \frac{p - 1}{n}  \overline{\beta}_{j(q)}^2(z) \Big\{ \mathbf{q}_{jq}^\star (\tilde{\bfSigma}^{(-q)})\sq \tilde{\D}_{j(q)}^{-}(z) (  \E [ \su_{nq}(z) ] \tilde{\bfSigma}^{(-q)} + \tilde{\mathbf{I}}^{(-q)} )^{-} (\tilde{\bfSigma}^{(-q)})\sq \mathbf{q}_{jq} \nonumber 
 	 \\ & -  \frac{1}{p - 1} \E \Big[ \tr ( \E [ \su_{nq} (z) ] \tilde{\bfSigma}^{(-q)} + \tilde{\mathbf{I}}^{(-q)} )\inv \tilde{\bfSigma}^{(-q)} \tilde{\D}_{j(q)}^{-}(z) \Big] \Big\} \hat{\gamma}_{j(q)}(z) \Bigg] , \label{def_T1} \\
 	 T_{n,2}(z) & = - \frac{1}{\sqrt{n}} \sum\limits_{j=1}^{n} \Bigg\{  \E \left[ \beta_{j}(z) \right] 
	 \E \Big[ \beta_{j}(z) \rd_j^\star \D_{j}\inv(z) 
	 \Big (   \E \su_{n}(z) \bfSigma + \mathbf{I} \Big) \inv \bfSigma \D_{j}\inv(z) \rd_j \Big] \nonumber \\ 
	 & - \E \left[ \beta_{j(q)}(z) \right] 
	 \E \Big[ \beta_{j(q)}(z) \rd_{jq}^\star \D_{j(q)}\inv(z) 
	 \Big (   \E \su_{nq}(z) \bfSigma^{(-q)} + \mathbf{I} \Big) \inv \bfSigma^{(-q)} \D_{j(q)}\inv(z) \rd_{jq} \Big]
	 \Bigg\}.  \label{def_T2}
      \end{align}
    For this argument, we use the facts
	\begin{align*}
		& \E \Big[  \overline{\beta}_{j}(z) \Big\{ \mathbf{q}_{j}^\star \bfSigma\sq \D_{j}\inv(z) (  \E [ s_{n,t}(z) ] \bfSigma + \mathbf{I} )\inv \bfSigma\sq \mathbf{q}_j   - \frac{1}{p} \E \Big[ \tr ( \E [ \su_{n} (z) ] \bfSigma + \mathbf{I} )\inv \bfSigma \D_{j}\inv(z) \Big]  \Big\}  \Big]  
  = 0, \\ 
   &  \E \Big[ \overline{\beta}_{j(q)}(z) \Big\{ \mathbf{q}_{jq}^\star (\tilde{\bfSigma}^{(-q)})\sq \tilde{\D}_{j(q)}^{-}(z) (  \E [ \su_{nq}(z) ] \tilde{\bfSigma}^{(-q)} + \tilde{\mathbf{I}}^{(-q)} )^{-} (\tilde{\bfSigma}^{(-q)})\sq \mathbf{q}_{jq} 
 	 \\ &  ~~~~~~~~~~ -  \frac{1}{p - 1} \E \Big[ \tr ( \E [ \su_{nq} (z) ] \tilde{\bfSigma}^{(-q)} + \tilde{\mathbf{I}}^{(-q)} )\inv \tilde{\bfSigma}^{(-q)} \tilde{\D}_{j(q)}^{-}(z) \Big] \Big\} \Big] =0 ~, 
    \end{align*} 
  and 
    \begin{align*}
       &  \E \Bigg| 
         y_n \overline{\beta}_{j}^2(z) \beta_j(z) \Big\{ \mathbf{q}_{j}^\star \bfSigma\sq \D_{j}\inv(z)
	 ( \E [ \tilde{\su}_{n}(z) ] \bfSigma + \mathbf{I} )\inv \bfSigma\sq \mathbf{q}_j  \\
	 &   ~~~~~~~~~~ - \frac{1}{p} \E \Big [ \tr ( \E [ \tilde{\su}_{n} (z) ] \bfSigma + \mathbf{I} )\inv \bfSigma \D_{j}\inv(z) \Big] \hat{\gamma}_j^2(z) \Big\} 
	 \\ &  ~~~~~~~~~~- \frac{p - 1}{n}  \overline{\beta}_{j(q)}^2(z) \beta_{j(q)}(z) \Big\{ \mathbf{q}_{jq}^\star (\tilde{\bfSigma}^{(-q)})\sq \tilde{\D}_{j(q)}^{-}(z) (  \E [ \su_{nq}(z) ] \tilde{\bfSigma}^{(-q)} + \tilde{\mathbf{I}}^{(-q)} )^{-} (\tilde{\bfSigma}^{(-q)})\sq \mathbf{q}_{jq} 
 	 \\ &  ~~~~~~~~~~ -  \frac{1}{p - 1} \E \Big[ \tr ( \E [ \su_{nq} (z) ] \tilde{\bfSigma}^{(-q)} + \tilde{\mathbf{I}}^{(-q)} )\inv \tilde{\bfSigma}^{(-q)} \tilde{\D}_{j(q)}^{-}(z) \Big] \Big\} \hat{\gamma}_{j(q)}^2(z) 
        \Bigg| = o \lb n^{-3/2} \rb,
    \end{align*}        
    which is a consequence of Lemma \ref{h1a}. 
    For the term in \eqref{def_T1}, we obtain the representation 
    \begin{align*}
        T_{n,1} & = \sqrt{n}   \sum\limits_{j=1}^{n} \E \Big[ y_{n} \overline{\beta}_{j}^2(z) \Big\{ \mathbf{q}_{j}^\star \T \sq \D_{j}\inv(z) (  \E [ \su_{n}(z) ] \T + \mathbf{I} )\inv \T\sq \mathbf{q}_j \nonumber
	\\  & - \frac{1}{p} \tr ( \E [ \su_{n} (z) ] \T + \mathbf{I} )\inv \T \D_{j}\inv(z)   \Big\}  \hat{\gamma}_{j}(z) \\ & 
    - \frac{p - 1}{n} \overline{\beta}_{j(q)}^2(z) \Big\{ \mathbf{q}_{jq}^\star (\Tq )\sq \tilde{\D}_{j(q)}^-(z) (  \E [ \su_{nq}(z) ] \Tq + \tilde{\mathbf{I}}^{(-q)} )\inv (\Tq)\sq \mathbf{q}_{jq} \nonumber
	\\  & - \frac{1}{p - 1} \tr ( \E [ \su_{nq} (z) ] \Tq + \tilde{\mathbf{I}}^{(-q)} )\inv \T^{(-q)} \D_{j(q)}\inv(z)   \Big\}  \hat{\gamma}_{j(q)}(z)
 \Big] \\
 & = \sqrt{n}  z^2 \su^2(z)  \sum\limits_{j=1}^{n} \E \Big[ y_{n}  \Big\{ \mathbf{q}_{j}^\star \T \sq \D_{j}\inv(z) (  \E [ \su_{n}(z) ] \T + \mathbf{I} )\inv \T\sq \mathbf{q}_j \nonumber
	\\  & - \frac{1}{p} \tr ( \E [ \su_{n} (z) ] \T + \mathbf{I} )\inv \T \D_{j}\inv(z)   \Big\}  \hat{\gamma}_{j}(z) \\ & 
    - \frac{p - 1}{n}  \Big\{ \mathbf{q}_{jq}^\star (\Tq )\sq \tilde{\D}_{j(q)}^-(z) (  \E [ \su_{nq}(z) ] \Tq + \tilde{\mathbf{I}}^{(-q)} )\inv (\Tq)\sq \mathbf{q}_{jq} \nonumber
	\\  & - \frac{1}{p - 1} \tr ( \E [ \su_{nq} (z) ] \Tq + \tilde{\mathbf{I}}^{(-q)} )\inv \T^{(-q)} \D_{j(q)}\inv(z)   \Big\}  \hat{\gamma}_{j(q)}(z)
 \Big] + o(1), \\
    \end{align*}
    where we note that $\beta_{j}(z) , \overline{\beta}_{j}(z) , b_{j}(z), \beta_{j(q)}(z) , \overline{\beta}_{j(q)}(z) , b_{j(q)}(z)$ and similarly defined quantities  can be replaced by $-z \su (z)$ resulting in an asymptotically uniformly negligible error using Lemma \ref{h1a} and  Lemma 7.1.3 in \cite{diss}. 
    Similarly, we have for the term $T_{n,2}$ defined in \eqref{def_T2}
    \begin{align*} 
		T_{n,2}(z,t) & = - \frac{z^2 \su^2(z)}{n^{3/2}} \sum\limits_{j=1}^{n} \E \Big[ \tr \Big\{  \D_{j}\inv(z) 
	 \Big (  \E \su_{n}(z) \T + \mathbf{I} \Big)\inv \T \D_{j}\inv(z) \T \\ & 
  -   \Djq^-(z) 
	 \Big (  \E \su_{nq}(z) \Tq + \Iq \Big)\inv \Tq \Djq^-(z) \Tq \Big\} 
  \Big] + o(1)
   \\ 
  & = - \frac{z^2 \su^2(z)}{n^{3/2}} \sum\limits_{j=1}^{n} \E \Big[ \tr \Big\{  \D_{j}\inv(z) 
	 \Big (  \E \su_{n}(z) \T + \mathbf{I} \Big)\inv \T \D_{j}\inv(z) \T \\ & 
  -   \Djq^-(z) 
	 \Big (  \E \su_{nq}(z) \Tq + \Iq \Big)\inv \T \Djq^-(z) \T \Big\} 
  \Big] + o(1)
  = o(1). 
	\end{align*}

   Thus, it is left to show that $T_{n,1}$ vanishes asymptotically. Then, equation (9.8.6) in \cite{bai2004} gives 
    \begin{align*}
       &  T_{n,1}(z)  = \kappa \frac{z^2 \su^2(z)}{n^{3/2}} \sum\limits_{j=1}^{n} \E \Big[ \tr \Big\{  \D_{j}\inv(z) 
	 \Big (  \E \su_{n}(z) \T + \mathbf{I} \Big)\inv \T  
	 \\ & 
  -   \Djq^-(z) 
	 \Big (  \E \su_{nq}(z) \Tq + \Iq \Big)\inv \T  \Big\} \D_{j}\inv(z) \T
  \Big]  \\ & 
  + \kappa \frac{z^2 \su^2(z)}{n^{3/2}} \sum\limits_{j=1}^{n} \E \Big[ \tr \Djq^-(z) 
	 \Big (  \E \su_{nq}(z) \Tq + \Iq \Big)\inv \T \left\{  \D_{j}\inv(z) - \tilde{\D}_{j(q)}^-(z) \right\} \T
  \Big] \\ & 
  + \frac{z^2 \su^2(z) (v_4 - \kappa - 1) }{n^{3/2}} \sum\limits_{j=1}^{n} \E \Big[ \tr \Big\{  \D_{j}\inv(z) 
	 \Big (  \E \su_{n}(z) \T + \mathbf{I} \Big)\inv \T  \\ & 
  -   \Djq^-(z) 
	 \Big (  \E \su_{nq}(z) \Tq + \Iq \Big)\inv \T  \Big\} \circ \D_{j}\inv(z) \T
  \Big] + o(1) \\ & 
  + \frac{z^2 \su^2(z) (\nu_4 - \kappa - 1 ) }{n^{3/2}} \sum\limits_{j=1}^{n} \E \Big[ \tr \Djq^-(z) 
	 \Big (  \E \su_{nq}(z) \Tq + \Iq \Big)\inv \T \circ \left\{  \D_{j}\inv(z) - \tilde{\D}_{j(q)}^-(z) \right\} \T
  \Big] 
  + o(1) 
  \\ & =o(1).
    \end{align*}
 Here, $\mathbf{A} \circ \mathbf{B}$ denotes the Hadamard product of two $p \times p$ matrices $\mathbf{A}$ and $\mathbf{B}$ and we  have used the inequality  $| \tr \mathbf{A} \circ \mathbf{B} | \leq \lb \tr \mathbf{A} \mathbf{A}^\star  \tr \mathbf{B} \mathbf{B}^\star \rb^{1/2}. $ Thus, we have $T_{n,1}(z) + T_{n,2}(z) = o(1)$, which proves \eqref{a4} and completes the proof of Theorem \ref{thm_bias}.

\subsection{Proofs of Theorem \ref{thm_asympt_ind} and results in Section \ref{sec_applications}}
\label{sec_proof_applications}

\subsubsection{Proof of Theorem \ref{thm_asympt_ind}}
    Combining Theorem 1.4 of  \cite{panzhou2008} and Theorem \ref{thm_lss}, the crucial point is to prove the asymptotic independence. Since the limiting distributions are Gaussian, it suffices to show that 
    \begin{align*}
       \lim_{n\to\infty}  \cov ( X_n(f_1), X_n(f_2,q) ) = 0. 
    \end{align*}
    This implied by the convergence
    \begin{align*}
        \lim_{n\to\infty} \cov ( M_{n,q}^{(1)}(z_1), M_n^{(1)}(z_2) ) = 0, ~ z_1, z_2 \in \mathbb{C}^+,
    \end{align*}
    where $M_{n,q}^{(1)}(z)$ is defined in \eqref{def_mn1} and 
    \begin{align*}
        M_n(z) =  p \lb s_{F^{\mathbf{\hat\bfSigma}}} (z) - s_{{F}^{y_{n}, H_{n}}} (z) \rb.  
    \end{align*}
    Following  the discussion of Section 4 in \cite{panzhou2008} and Section \ref{sec_fidis}, we need to verify that 
    	\begin{align*}
		\lim_{n\to\infty} \frac{\kappa}{n^{3/2}} \sum\limits_{j=1}^n b_j(z_1) b_j(z_2) \tr \lb \E_j [ \bfB_{qj}(z_1) ] \E_j [ \bfSigma \D_j\inv(z_2) ] \rb 
  & = 0, \\ 
		 \lim_{n\to\infty} \frac{v_3 - \kappa - 1}{n^{3/2}} \sum\limits_{j=1}^n  b_j(z_1) b_j(z_2) \tr \lb  \E_j [ \bfB_{qj}(z_1) ] \circ \E_j [ \bfSigma \D_j\inv(z_2) ] \rb 
   & = 0.
	\end{align*}
These results are a consequence of the inequalities 
 \begin{align*}
     \left| \tr \lb \E_j [ \bfB_{qj}(z_1) ] \E_j [ \bfSigma \D_j\inv(z_2) ] \rb  \right| & \lesssim 1, \\ 
     \left| \tr \lb  \E_j [ \bfB_{qj}(z_1) ] \circ \E_j [ \bfSigma \D_j\inv(z_2) ] \rb \right| & \lesssim 1, 
 \end{align*}
which  follow by  similar arguments as given in  the proof of Lemma \ref{lem_tr_B} below. Thus, the proof of Theorem \ref{thm_asympt_ind} is complete.

\subsubsection{Proof of Proposition \ref{prop_formula}}

 The proof follows the idea of \cite{wang2013}, where the analogue formula for  linear spectral statistics of sample covariance matrices was derived. 

The fact that the random variables $X(f_1, q_1)$ and $X(f_2, q_2)$ are uncorrelated for two distinct integers $q_1, q_2$ follows from the proof of Theorem \ref{thm_fidis}. Recall that we showed that $\sigma^2 ( z_1, z_2, q_1, q_2) = \tau^2 (z_1, z_2, q_1, q_2) = 0 $ if $\bfSigma $ is a diagonal matrix  and $q_1 \neq q_2.$
Let us now consider the case $q_1 = q_2 =q.$
For $\bfSigma = \bfI,$ we have 
\begin{align}
    \sigma^2(z_1, z_2, q, q) & = \frac{(1 + \su(z_1) + \su(z_2) + (1 + y) \su(z_1) \su(z_2) ) \su'(z_1)  \su'(z_2) }{(1 + \su(z_2)  + 
  \su(z_1)  + (1 - y) \su(z_1) \su(z_2)  )^3}, \label{a5}
  \\
  \tau^2(z_1, z_2, q, q) & = \frac{ \su'(z_1) \su'(z_2) }{(1 + \su(z_1) )^2 (1 + \su(z_2) )^2}. \label{a6}
\end{align}
	In order to calculate the contour integrals giving the covariance structure, we define two non-overlapping contours through
	\begin{align*}
		z_j = z_j(\xi_j) =  \lb 1 + h \xi_j + h r_j\inv \overline{\xi}_j + h^2 \rb, ~j=1,2, ~ r_2 > r_1 > 1, ~ | \xi_j | =1.
	\end{align*}		
		It can be checked that when $\xi_j$ runs anticlockwise on the unit circle, $z_j$ will run a contour $\mathcal{C}_j$ enclosing the interval $ [  (1 - h)^2,  (1+h)^2]$, $j\in \{1,2\}.$ 
 Indeed, it suffices for $\mathcal{C}_1$ and $\mathcal{C}_2$ to enclose this interval, and we may neglect the discrete part at the origin appearing in the case $y \geq 1$  \citep[see the Proof of Proposition 4.1.1 in ][for more details]{diss}. 
 Using the identity \eqref{repl_a47}, we have for $z\in\mathcal{C}, ~ j\in \{ 1 ,2\} $ 
	\begin{align*}
		\su(z_j) & = - \frac{1}{t ( 1 + h r_j \xi_j )}~,~ 
		d z_j  = h (r_j - r_j\inv \xi_j^{-2} ) d\xi_j. 
		\end{align*}
Combining this with \eqref{a5}, \eqref{a6} and \eqref{cov_X}, we get the desired formula for the covaraince.

\subsubsection{Proof of Corollary \ref{cor_log}}

We will use Theorem \ref{thm_lss} and Proposition \ref{prop_formula} to prove the assertion. 
    Let us first check that all assumptions of Theorem \ref{thm_lss} are satisfied. Besides \ref{A1} and \ref{ass_lindeberg}, the remaining conditions are also satisfies since $\bfSigma = \bfI$ (see Remark \ref{rem_lss}).  
    
    We continue with the calculation of the centering term. 
    Using Example 2.11 in \cite{yao2015}, we obtain
    \begin{align*}
		p \int \log x d F^{y_n} (x) & = 
		- p  - n \log ( 1 - y_{n} ) + p \log \lb 1 - y_n \rb,
    \end{align*}
   and a Taylor's expansion implies 
    \begin{align*}
        p \int \log x d F^{y_n} (x)
        - (p-1) \int \log x d F^{(p - 1)/n} (x)
        & = - 1 + (n - p) \log \lb \frac{1 - \frac{p - 1}{n} }{1 - \frac{p}{n}} \rb + \log \lb 1 - \frac{p - 1}{n} \rb 
       \\ &  = \log \lb \frac{n - p + 1}{n} \rb + \mathcal{O} (n\inv). 
    \end{align*}
    This implies
    \begin{align*}
        X_n( \log (\cdot), q_1) 
        = \sqrt{n} \left( \log \big| \bfSigmahat_n \big| - \log \big| \bfSigmahat_n^{(-q_1)} \big| - \log \big( \frac{n - p + 1}{n} \big) \right) + o(1). 
    \end{align*}
    Similarly, by using
    \begin{align*}
        p \int x d F^{y_n} (x) & = 1, \quad p \int x^2 d F^{y_n} (x) = 1 + y_n,
    \end{align*}
    we obtain the other centering terms. 
    Note that $\cov (X(f_1,q_1) , X(f_2,q_2) ) = 0, ~ q_1 \neq q_2$ by Proposition \ref{prop_formula}.
    Using the representation in Proposition \ref{prop_formula}, it has become a standard task in the literature to calculate the resulting integrals using the residue theorem (see, e.g., \cite{yao2015, dornemann2021linear, wang2013}). Thus, the detailed calculation of $\cov(X( f ,q_1), X(f, q_1)), f(x)=\log(x), f(x)=x$ or $f(x)=x^2$ is omitted for the sake of brevity.

    \section{Auxiliary results}

	The following lemma ensures that the process $(\hat{M}_n(z))_{z\in\mathcal{C}^+}$ defined in \eqref{def_hat_m} provides an appropriate approximation  for  the  process $(M_n(z))_{z\in\mathcal{C}^+}$.
	
	\begin{lemma} \label{lem_approx_m}
	Let $i\in\{1,2\}.$
	It holds with probability 1
	\begin{align*}
		\Big| \int \limits_{\mathcal{C}} f_i(z) \lb M_{n,q}(z) - \hat{M}_{n,q}(z) \rb dz \Big|  = o(1), \textnormal{ as } n\to\infty, ~ i=1,2. 
	\end{align*}
	\end{lemma}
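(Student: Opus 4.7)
The plan is to exploit the fact that $M_{n,q}(z) - \hat M_{n,q}(z) = 0$ for $z \in \mathcal{C}_n \cup \overline{\mathcal{C}_n}$, so the integral over $\mathcal{C}$ reduces to four short vertical segments of length $n^{-3/2}\varepsilon_n$ located at $x_l$ and $x_r$ next to the real axis (together with their complex conjugates). On each such segment, $\hat M_{n,q}$ is constant and equal to the value of $M_{n,q}$ at its top endpoint, so it suffices to prove an almost sure bound $|M_{n,q}(z)| = O(n^{3/2})$ uniformly on these segments; multiplying by the segment length $n^{-3/2}\varepsilon_n$ then gives an overall bound of order $\varepsilon_n = o(1)$.

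To obtain such a bound, I would invoke the almost sure convergence of the extreme eigenvalues of $\hat\bfSigma_n$ (Bai--Yin type results, valid under the assumed fifth moment and the preceding truncation step), combined with eigenvalue interlacing to transfer the same property to $\hat\bfSigma_n^{(-q)}$, in order to conclude that with probability one, for all sufficiently large $n$,
\[
\operatorname{spec}(\hat\bfSigma_n) \cup \operatorname{spec}(\hat\bfSigma_n^{(-q)}) \subset [a,b]
\]
for some fixed $x_l < a < b < x_r$. A standard support-convergence argument for the generalized Mar\v{c}enko--Pastur family then ensures that the supports of the deterministic distributions $F^{y_n,H_n}$ and $F^{(p-1)/n,H_{nq}}$ are also contained in $[a,b]$ for $n$ large. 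Hence, for every $z$ on the four small segments, the distance from $z$ to each of the four relevant spectra and supports is bounded below by a fixed $\delta>0$. This yields the trivial estimate $|\tr \D^{-1}(z)|, |\tr \D_{(q)}^{-1}(z)| \leq p/\delta$ together with analogous bounds for the deterministic Stieltjes transforms, so that $|M_{n,q}(z)| = O(\sqrt{n}\, p) = O(n^{3/2})$ almost surely on these segments, and likewise for $|\hat M_{n,q}(z)|$.

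The only subtle point is ensuring this $O(n^{3/2})$ bound uniformly down to $v = 0$, which hinges on the fact that $x_l,x_r$ were chosen strictly outside the limit interval \eqref{interval} together with the strict separation of the spectra from $x_l,x_r$ provided by Bai--Yin. This is precisely what the modification $n^{-1} \to n^{-3/2}$ highlighted in Remark \ref{rem1} accommodates: the natural $\sqrt{n}$ prefactor in the definition of $M_{n,q}$ forces the segment length to shrink by an additional factor of $n^{-1/2}$ compared to \cite{bai2004}, so that the product $|M_{n,q}| \cdot (\text{segment length})$ still vanishes as $n \to \infty$.
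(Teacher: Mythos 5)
Your proof is correct and follows essentially the same route as the paper's, which simply defers the details to Lemma 6.4.3 of the cited dissertation and records only the scaling adjustment you identify in your final paragraph. The chain of steps — restricting to the four short vertical segments where $M_{n,q}$ and $\hat M_{n,q}$ differ, invoking the a.s. spectral separation of $\hat\bfSigma_n$ from $x_l,x_r$ (as in \eqref{ind_bound}) together with Cauchy interlacing for the principal submatrix $\hat\bfSigma_n^{(-q)}$, deriving the crude bound $|M_{n,q}(z)|=O(n^{3/2})$ uniformly in the imaginary part, and then multiplying by the segment length $n^{-3/2}\varepsilon_n$ to obtain $O(\varepsilon_n)=o(1)$ — is exactly the intended argument, with the replacement of $n^{-1}$ by $n^{-3/2}$ serving precisely to absorb the extra $\sqrt{n}$ in the definition of $M_{n,q}$, as Remark \ref{rem1} notes.
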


 \begin{proof}
     The proof 
     follows by similar arguments 
     as given in the proof of Lemma 6.4.3 
     in \cite{diss} and the details are omitted for the sake of brevity. Note that $M_{n,q}$ and its approximate $\hat{M}_{n,q}$ include an extra factor $\sqrt{n}$ compared to $M_n$ and $\hat{M}_n$ in \cite{diss}. This is considered by the definition of $\hat{M}_{n,q}$ in \eqref{def_hat_m}. 
 \end{proof}

The following bound is crucial: Due to the specific structure of $\bfB_{qj}(z)$ as a difference of similar matrices, its trace can be shown to be of constant order instead of order $n$ for each single summand. More precisely, we have the following result. 
\begin{lemma} \label{lem_tr_B}
It holds for all $\alpha \geq 1$
\begin{align*}
	\E \lb \tr \bfB_{qj}(z) \bfB_{qj}(z)^\star \rb ^{\alpha} \lesssim 1,
\end{align*}
where
\begin{align*}
	\bfB_{qj}(z) = \bfSigma^{1/2} \D_j\inv(z) \bfSigma^{1/2} - \lb \bfSigma^{1/2} \rb^{(\cdot,-q)} \D_{j(q)}^{-1}(z) \lb \bfSigma^{1/2} \rb^{(-q,\cdot)}.
	\end{align*}
\end{lemma}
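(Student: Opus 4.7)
The plan is to prove the lemma by unmasking a hidden rank-one structure in $\bfB_{qj}(z)$. The key observation is that although each of the two matrices $\bfSigma^{1/2}\D_j^{-1}(z)\bfSigma^{1/2}$ and $(\bfSigma^{1/2})^{(\cdot,-q)}\D_{j(q)}^{-1}(z)(\bfSigma^{1/2})^{(-q,\cdot)}$ has trace of order $n$, their difference $\bfB_{qj}(z)$ is in fact of rank at most one, so its Frobenius norm collapses to its operator norm, which is easily controlled.

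To make this precise, I would first note that $(\D_j(z))^{(-q,-q)} = \D_{j(q)}(z)$ (a direct consequence of $(\rd_j\rd_j^\star)^{(-q,-q)} = \rd_{jq}\rd_{jq}^\star$). After permuting the index $q$ to the last position, write
\begin{align*}
\D_j(z) = \begin{pmatrix} \D_{j(q)}(z) & \mathbf{c} \\ \mathbf{c}^\star & d \end{pmatrix},
\end{align*}
where $\mathbf{c}\in\mathbb{C}^{p-1}$ and $d\in\mathbb{C}$ denote the off-diagonal and diagonal parts of the $q$-th column of $\D_j(z)$. Applying the block-matrix inversion formula with Schur complement $s := d - \mathbf{c}^\star \D_{j(q)}^{-1}(z)\mathbf{c}$ gives, after subtracting $\tilde{\D}_{j(q)}^-(z)$ (which has the $(-q,-q)$-block $\D_{j(q)}^{-1}(z)$ and zeros on the $q$-th row and column),
\begin{align*}
\D_j^{-1}(z) - \tilde{\D}_{j(q)}^-(z) \;=\; s^{-1} \mathbf{h}\mathbf{h}^\star, \qquad \mathbf{h} := \begin{pmatrix} -\D_{j(q)}^{-1}(z)\mathbf{c} \\ 1 \end{pmatrix}.
\end{align*}
Hence $\bfB_{qj}(z) = s^{-1} (\bfSigma^{1/2}\mathbf{h})(\bfSigma^{1/2}\mathbf{h})^\star$ has rank at most one, so $\|\bfB_{qj}(z)\|_F = \|\bfB_{qj}(z)\|_{\mathrm{op}}$, and using $\|\tilde{\D}_{j(q)}^-(z)\| = \|\D_{j(q)}^{-1}(z)\|$ together with the submultiplicativity of the operator norm,
\begin{align*}
\tr \bfB_{qj}(z)\bfB_{qj}(z)^\star \;=\; \|\bfB_{qj}(z)\|_{\mathrm{op}}^2 \;\leq\; \|\bfSigma\|^2\bigl(\|\D_j^{-1}(z)\| + \|\D_{j(q)}^{-1}(z)\|\bigr)^2.
\end{align*}

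The remainder is routine: I would take the $\alpha$-th moment of the right-hand side via the truncation argument already developed in Section \ref{sec_tight}. On the good event $\mathcal{E}_n$ that the extreme eigenvalues of $\hat\bfSigma$ and $\hat\bfSigma^{(-q)}$ lie in $[\eta_l,\eta_r]$, the contour $\mathcal{C}_n$ is bounded away from the spectrum and hence $\|\D_j^{-1}(z)\|, \|\D_{j(q)}^{-1}(z)\| \lesssim 1$ uniformly; on $\mathcal{E}_n^c$ we have the trivial bound $\|\D_j^{-1}(z)\|,\|\D_{j(q)}^{-1}(z)\| \leq |\im z|^{-1} \lesssim n^{3/2}\varepsilon_n^{-1} \lesssim n^2$, while $\mathbb{P}(\mathcal{E}_n^c) = o(n^{-m})$ for any $m$ by \eqref{ind_bound}. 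Choosing $m$ sufficiently large (depending on $\alpha$) to absorb the polynomial blow-up then yields $\E(\tr \bfB_{qj}(z)\bfB_{qj}(z)^\star)^\alpha \lesssim 1$, as required. I do not anticipate any serious obstacle: the real content is the rank-one reduction, which converts what looks like a trace of size $p$ into a bounded operator norm, and every remaining ingredient (bounds on resolvents, high-probability control of extreme eigenvalues) is already in place in the paper.
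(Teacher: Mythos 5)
Your proof is correct in its essentials and takes a genuinely different algebraic route from the paper's. The paper proves the bound by manipulating the identity
$\D_j\inv(z) - \tilde\D_{j(q)}^{-}(z) = \tilde \D_{j(q)}^{-}(z)\lb \tilde\D_{j(q)}(z)  - \D_j(z)\rb \D_j\inv(z) + \lb \tilde \D_{j}\inv(z) \rb^{(q,\cdot)}$,
which exhibits the difference of resolvents as a low-rank object (the middle factor has non-zero entries only in the $q$th row and column), and then bounds the relevant traces using the structural "one row/column" property. Your Schur-complement argument is sharper: it shows that $\D_j\inv(z) - \tilde\D_{j(q)}^{-}(z)$ has rank exactly at most one, which gives the cleanest possible collapse of Frobenius norm to operator norm. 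The paper's decomposition only yields rank at most three, though of course that is already enough for the bounded-trace conclusion. Both approaches then finish the same way, invoking the resolvent bounds on the good event and absorbing the bad event with the superpolynomial tail bound \eqref{ind_bound}.

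One small correction: your explicit rank-one formula is not quite right for complex $z$. Since $\D_{j(q)}(z)$ is not Hermitian for $\im(z)\neq 0$, the block-inversion formula gives
$\D_j\inv(z) - \tilde\D_{j(q)}^{-}(z) = s^{-1}\mathbf{h}\mathbf{g}^{\top}$ with $\mathbf{h} = \bigl(-\D_{j(q)}\inv(z)\mathbf{c},\ 1\bigr)^{\top}$ and $\mathbf{g}^{\top} = \bigl(-\mathbf{c}^\star\D_{j(q)}\inv(z),\ 1\bigr)$, which is not of the Hermitian form $s^{-1}\mathbf{h}\mathbf{h}^\star$ because $\mathbf{g}\neq \overline{\mathbf{h}}$ (one would need $(\D_{j(q)}\inv(z))^\star = \D_{j(q)}\inv(z)$). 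This does not affect your conclusion — a matrix of the form $\mathbf{u}\mathbf{v}^{\top}$ has a single non-zero singular value, so Frobenius and operator norm still coincide — but the displayed identity as written holds only for real $z$.
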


\begin{proof}
To begin with, we note that
\begin{align*}
	\bfB_{qj}(z) = \bfSigma^{1/2} \lb  \D_j\inv(z) - \tilde{\D}_{j(q)}^{-}(z) \rb \bfSigma^{1/2}.
	\end{align*}
Here, $\tilde{\D}_{j(q)}^{-}(z)$ denotes the $p\times p$ dimensional matrix which has zeros in its $q$th row and column and otherwise the entries of the $(p-1)\times (p-1)$ dimensional matrix $\D_{j(q)}\inv(z)$ and similarly, $\tilde{\D}_{j(q)}(z)$ denotes the corresponding version of $\D_{j(q)}(z)$.
	The $p\times p$ matrix $\lb \tilde \D_{j}\inv(z) \rb^{(q,\cdot)}$ contains the $q$th row  of $\D_j\inv(z)$ and is elsewhere filled with zeros. We have
	\begin{align*}
		\tilde \D_{j(q)}^{-}(z) \tilde\D_{j(q)}(z) \D_j\inv(z)
		= \D_j\inv(z) - \lb \tilde\D_j\inv(z) \rb^{(q,\cdot)}~,
	\end{align*}
which yields for  the difference of the resolvents
	\begin{align}
		\D_j\inv(z) - \tilde\D_{j(q)}^{-}(z) 
		& =\tilde \D_{j(q)}^{-}(z) \tilde\D_{j(q)}(z) \D_j\inv(z) 
		- \tilde\D_{j(q)}^{-}(z)  \D_j(z) \D_j\inv(z) + \lb \tilde \D_{j}\inv(z) \rb^{(q,\cdot)} \nonumber \\
		& = \tilde \D_{j(q)}^{-}(z)  \lb \tilde\D_{j(q)}(z)  - \D_j(z)  \rb \D_j\inv(z) 
		+  \lb \tilde \D_{j}\inv(z) \rb^{(q,\cdot)}.
		\label{diff_resolv}
	\end{align}
	Note that the difference $  \D_j(z) - \tilde\D_{j(q)}(z) $ contains the $q$th row and column of  $\hat\bfSigma - z \bfI$ and is elsewhere filled with zeros. If $\bfA^{(q,q)}\in \mathbb{C}^{p\times p}$ denotes any matrix with bounded spectral norm and non-zero entries only in the $q$th row and column and $\bfB \in \mathbb{C}^{p\times p}$ is another matrix with bounded spectral norm, then 
	\begin{align*}
		 \tr \lb  \bfA^{(q,q)} \bfB \rb  = 
		 \lb  \bfA^{(q,q)} \bfB \rb_{qq} + \lb  \lb \bfA^{(q,q)} \rb^\top  \bfB^\top \rb_{qq} 
		 \lesssim || \bfA^{(q,q)} || \cdot || \bfB || \lesssim 1. 
	\end{align*}
	We have
	\begin{align} \label{a.2}
		\tr \lb  \bfB_{qj}(z) \bfB_{qj}(z)^\star \rb 
		= \tr \bfB_{qj}(z) \bfB_{qj}(\overline{z} ) 
		= \tr \bfSigma \lb  \D_j\inv(z) - \tilde{\D}_{j(q)}^{-}(z) \rb \bfSigma \lb  \D_j\inv(\overline{z}) - \tilde{\D}_{j(q)}^{-}(\overline{z}) \rb .
	\end{align}
	Note that the spectral norm of $\D_{j}\inv(z)$ and similarly defined matrices is bounded. 
	As the spectral norm of $\hat\bfSigma$ is bounded almost surely, the quantity $\tr \lb \bfB_{q1}(z) \bfB_{q1} (z)^\star \rb$ is seen to be bounded almost surely using \eqref{diff_resolv} and \eqref{a.2} (note that this bound is independent of $j, n $ or $p$).
\end{proof}

\begin{lemma}  \label{lem_bound_gamma_alpha}
	It holds for $n\to\infty$
	\begin{align*}
		& \E \Big| \sqrt{n} \sum\limits_{j=1}^n
		(\E_{j} - \E_{j-1} )  \Big\{ \overline{\beta}_{j}^2(z) \lb \hat{\gamma}_{j}(z) \alpha_{j}(z) - \beta_{j}(z) \mathbf{r}_{j}^\star 
		\mathbf{D}_{j}^{-2}(z) \mathbf{r}_{j} \hat{\gamma}_{j}^2(z) \rb \\
		&  - \overline{\beta}_{j(q)}^2(z) \lb \hat{\gamma}_{j(q)}(z) \alpha_{j(q)}(z) - \beta_{j(q)}(z) \mathbf{r}_{j}^\star 
		\mathbf{D}_{j(q)}^{-2}(z) \mathbf{r}_{j} \hat{\gamma}_{j(q)}^2(z) \rb
		\Big\} \Big|^2 \\ & = o(1)
	\end{align*}
\end{lemma}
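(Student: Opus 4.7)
My plan is to exploit the martingale-difference structure together with the cancellation coming from the ``full versus deleted-$q$'' structure. Writing $X_{jq}(z)$ for the $j$th summand inside the curly braces, the fact that $((\E_j-\E_{j-1})X_{jq})_{j=1}^n$ is a martingale difference sequence with respect to $(\mathcal F_{nj})_{j\geq 1}$ gives, via orthogonality and conditional Jensen,
\begin{align*}
\E\Big| \sqrt n \sum_{j=1}^n (\E_j-\E_{j-1}) X_{jq}(z)\Big|^2
\;\leq\; 4 n\sum_{j=1}^n \E|X_{jq}(z)|^2.
\end{align*}
Thus it suffices to establish the uniform bound $\E|X_{jq}(z)|^2=o(n^{-2})$.

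To obtain such a bound, I will split $X_{jq}=A_{jq}-B_{jq}$, where $A_{jq}=\overline\beta_j^2\hat\gamma_j\alpha_j-\overline\beta_{j(q)}^2\hat\gamma_{j(q)}\alpha_{j(q)}$ and $B_{jq}$ is the analogous difference involving the $\beta_j \rd_j^\star\D_j^{-2}\rd_j\hat\gamma_j^2$ terms, and telescope each difference using identities such as
\begin{align*}
A_{jq} &= (\overline\beta_j^2-\overline\beta_{j(q)}^2)\hat\gamma_j\alpha_j
+\overline\beta_{j(q)}^2(\hat\gamma_j-\hat\gamma_{j(q)})\alpha_j
+\overline\beta_{j(q)}^2\hat\gamma_{j(q)}(\alpha_j-\alpha_{j(q)}).
\end{align*}
The three elementary ingredients I will combine are: (i)~the quadratic-form moment bound \eqref{bound_quad_form}, which yields $\E|\hat\gamma_j|^4+\E|\alpha_j|^4\lesssim n^{-3/2}\eta_n^3$ via the spectral-norm branch; (ii)~the refined ``difference of resolvents'' bound $\E|\hat\gamma_j-\hat\gamma_{j(q)}|^4+\E|\alpha_j-\alpha_{j(q)}|^4\lesssim n^{-5/2}\eta_n^3$, obtained by applying the \emph{Hilbert--Schmidt} branch of \eqref{bound_quad_form} to the matrix $\bfB_{qj}(z)$ and invoking Lemma~\ref{lem_tr_B} to control $\tr\bfB_{qj}\bfB_{qj}^\star$ by a constant rather than by $n$; and (iii)~the scalar estimate $|\overline\beta_j-\overline\beta_{j(q)}|\lesssim n^{-1}$, which follows from the resolvent identity combined with $|\tr\bfSigma(\D_j^{-1}-\tilde\D_{j(q)}^{-})|=|\tr\bfB_{qj}|\lesssim 1$ (again a Lemma~\ref{lem_tr_B} consequence). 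A Cauchy--Schwarz application to each piece then yields, e.g.,
\begin{align*}
\E\big|\overline\beta_{j(q)}^2(\hat\gamma_j-\hat\gamma_{j(q)})\alpha_j\big|^2
\;\lesssim\; \big(\E|\hat\gamma_j-\hat\gamma_{j(q)}|^4\,\E|\alpha_j|^4\big)^{1/2}
\;\lesssim\; n^{-2}\eta_n^3,
\end{align*}
and analogously for the two other summands of $A_{jq}$. The term $B_{jq}$ carries an extra factor $\hat\gamma_j^2$ which only improves the estimate (after controlling $\rd_j^\star\D_j^{-2}\rd_j$ via $\alpha_j+n^{-1}\tr\D_j^{-2}\bfSigma$), and the unbounded $\beta_j$ factor is handled through the decomposition \eqref{beta} combined with the extreme-eigenvalue bound \eqref{ind_bound}, so its contribution is controlled up to a term of order $o(n^{-m})$ for every $m$.

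Summing the pointwise bound $\E|X_{jq}|^2\lesssim n^{-2}\eta_n^3$ over $j$ and multiplying by $n$ gives $n\sum_j\E|X_{jq}|^2\lesssim \eta_n^3=o(1)$ because the truncation parameter $\eta_n\to 0$ under assumption \ref{ass_lindeberg}. The main technical obstacle is the careful bookkeeping for $B_{jq}$: the quantity $\beta_j$ is not deterministically bounded and the squared fluctuation $\hat\gamma_j^2$ must be coupled with a resolvent difference to produce an \emph{additional} $n^{-1/2}$ factor beyond what a naive Cauchy--Schwarz yields. This will be carried out by first replacing $\beta_j,\beta_{j(q)}$ by the truncated deterministic surrogates $b_j(z),b_{j(q)}(z)$ up to terms of negligible $L^2$ norm (using \eqref{eq_beta_gamma}), after which the remaining random differences are handled by the same telescoping--Cauchy-Schwarz scheme as for $A_{jq}$.
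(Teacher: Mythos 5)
Your proposal is correct and follows essentially the same route as the paper: the reduction via martingale-difference orthogonality to a bound on the individual summands, the telescoping decomposition of $\overline\beta_j^2\hat\gamma_j\alpha_j-\overline\beta_{j(q)}^2\hat\gamma_{j(q)}\alpha_{j(q)}$ into three terms, the use of Lemma~\ref{lem_tr_B} to get a constant-order Hilbert--Schmidt bound on $\bfB_{qj}$ (and analogously on $\bfA_{qj}$) so that the HS branch of \eqref{bound_quad_form} yields the extra $n^{-1}$ needed, the scalar estimate $|\overline\beta_j-\overline\beta_{j(q)}|\lesssim n^{-1}$, and the treatment of the unbounded $\beta_j$ in the $B_{jq}$ piece via \eqref{beta}--\eqref{ind_bound}. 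The paper only spells out the $A_{jq}$ part ($T_{1,j},T_{2,j},T_{3,j}$) and remarks that the remaining terms are similar; your added detail on $B_{jq}$ is consistent with what the paper does for the corresponding terms in the tightness proof.
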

\begin{proof}
	We restrict ourselves to a proof of
	\begin{align} \label{aim}
		 \E \Bigg| \sqrt{n} \sum\limits_{j=1}^n
		(\E_{j} - \E_{j-1} )  
		\Bigg\{ \overline{\beta}_{j}^2(z)  \hat{\gamma}_{j}(z) \alpha_{j}(z)
			 - \overline{\beta}_{j(q)}^2(z)  \hat{\gamma}_{j(q)}(z) \alpha_{j(q)}(z) 
		 \Bigg\} \Bigg|^2 =o(1). 
	\end{align}
	Using similar arguments for the remaining terms, the assertion of Lemma \ref{lem_bound_gamma_alpha} follows. 
	For a proof of \eqref{aim}, we decompose
	\begin{align*}
		& \overline{\beta}_{j}^2(z)  \hat{\gamma}_{j}(z) \alpha_{j}(z)
			 - \overline{\beta}_{j(q)}^2(z)  \hat{\gamma}_{j(q)}(z) \alpha_{j(q)}(z)  \\
			 & = (\overline{\beta}_{j}^2(z) - \overline{\beta}_{j(q)}^2(z) ) \hat{\gamma}_{j}(z) \alpha_{j}(z)
			 -  \overline{\beta}_{j(q)}^2(z) \lb \hat{\gamma}_{j(q)}(z) \alpha_{j(q)}(z) 
			 - \hat{\gamma}_{j}(z) \alpha_{j}(z)  \rb \\
			 & = T_{1,j} - T_{2,j} + T_{3,j},
		\end{align*}
		where
		\begin{align*}
			T_{1,j} & =  (\overline{\beta}_{j}(z) - \overline{\beta}_{j(q)}(z) ) 
			  (\overline{\beta}_{j}(z) + \overline{\beta}_{j(q)}(z) ) \hat{\gamma}_{j}(z) \alpha_{j}(z), \\
			 T_{2,j} & =   \overline{\beta}_{j(q)}^2(z)  \left\{ \hat{\gamma}_{j(q)}(z) - \hat\gamma_j(z) \right\} \alpha_{j(q)}(z) , \\
			 T_{3,j} & =\overline{\beta}_{j(q)}^2(z)  \hat{\gamma}_{j}(z) \left\{ \alpha_{j}(z) - \alpha_{j(q)}(z) \right\} .
	\end{align*}
	Considering the first term, we write
	\begin{align*}
		- T_{1,j} = n\inv \overline{\beta}_{j}(z) \overline{\beta}_{j(q)}(z)
		 \tr \bfB_{qj}(z)
		(\overline{\beta}_{j}(z) + \overline{\beta}_{j(q)}(z) ) \hat{\gamma}_{j}(z) \alpha_{j}(z).
	\end{align*}
	Using Lemma \ref{lem_tr_B} and \eqref{bound_quad_form}, we obtain
	\begin{align*}
	\E \Big | \sqrt{n} \sum\limits_{j=1}^n ( \E_j - \E_{j - 1} ) T_{1,j} \Big |^2
	\lesssim n\inv \sum\limits_{j=1}^n \E \left| \hat{\gamma}_j (z) \alpha_j(z) \right|^2 
	= o(1). 
	\end{align*}
	Using again Lemma \ref{lem_tr_B} and \eqref{bound_quad_form}, it follows for the second term
	\begin{align*}
		\E \Big | \sqrt{n} \sum\limits_{j=1}^n ( \E_j - \E_{j - 1} ) T_{2,j} \Big |^2
		& \lesssim n \sum\limits_{j=1}^n  \lb \E \left| n\inv \bfx_j^\star \bfB_{qj}(z) \bfx_j - n\inv \tr \bfB_{qj}(z) \right|^4 \E \left| \alpha_{j(q)}(z) \right|^4 
		\rb \sq \\ 
		& \lesssim n^2 \lb n^{-2.5} \eta_n n^{-1.5} \rb\sq = o(1).
	\end{align*}
	Similarly to Lemma \ref{lem_tr_B}, it can be shown that for any $\alpha \geq 1$ 
	\begin{align*}
		\E \lb \tr \bfA_{qj}(z) \bfA_{qj}(z)^\star \rb ^{\alpha} \lesssim 1.
	\end{align*}
	Combining this with the estimate \eqref{bound_quad_form}, we obtain for the third term
	\begin{align*}
		\E \Big | \sqrt{n} \sum\limits_{j=1}^n ( \E_j - \E_{j - 1} ) T_{3,j} \Big |^2
		& \lesssim 
		 n \sum\limits_{j=1}^n  \lb \E \left| n\inv \bfx_j^\star \bfA_{qj}(z) \bfx_j - n\inv \tr \bfA_{qj}(z) \right|^4 \E \left| \hat\gamma_j(z) \right|^4 
		\rb \sq \\ 
		& \lesssim n^2 \lb n^{-2.5} \eta_n n^{-1.5} \rb\sq = o(1).
			\end{align*}
\end{proof}

\begin{lemma}\label{lem_bound_2+delta}
It holds for sufficiently large $n\in\N$ and any $0 < \delta \leq 1/2$
\begin{align*}
	\max\limits_{1 \leq j \leq n} \E \left| \sqrt{n} \lb  \overline{\beta}_{j}(z) \alpha_{j}(z) -  \overline{\beta}_{j(q)}(z) \alpha_{j(q)}(z)\rb  \right|^{2+ \delta}
	 \lesssim n^{- ( 1 + \delta/2)} .
\end{align*}
\end{lemma}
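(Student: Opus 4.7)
The plan is to split
\begin{equation*}
\overline{\beta}_{j}(z)\alpha_{j}(z)-\overline{\beta}_{j(q)}(z)\alpha_{j(q)}(z)
=\bigl(\overline{\beta}_{j}(z)-\overline{\beta}_{j(q)}(z)\bigr)\alpha_{j}(z)
+\overline{\beta}_{j(q)}(z)\bigl(\alpha_{j}(z)-\alpha_{j(q)}(z)\bigr)
\end{equation*}
and to bound each summand separately in $L^{2+\delta}$. The crucial structural input is that the ``difference'' matrices $\bfA_{qj}(z)$ and $\bfB_{qj}(z)$ from the preliminaries are effectively of bounded rank, via the rank-$2$ perturbation identity used in the proof of Lemma \ref{lem_tr_B}. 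Hence Lemma \ref{lem_tr_B} (and the analogous statement for $\bfA_{qj}(z)$ noted in the proof of Lemma \ref{lem_bound_gamma_alpha}) not only controls the Frobenius norm but also yields $\E|\tr\bfA_{qj}(z)|^{\alpha}+\E|\tr\bfB_{qj}(z)|^{\alpha}\lesssim 1$ for every $\alpha\ge 1$ through $|\tr M|\le\sqrt{\mathrm{rank}(M)}\,\|M\|_F$.

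For the dominant second summand I would start from the identity
\begin{equation*}
\alpha_{j}(z)-\alpha_{j(q)}(z)=\frac{1}{n}\bigl(\bfx_j^\star\bfA_{qj}(z)\bfx_j-\tr\bfA_{qj}(z)\bigr),
\end{equation*}
which is verified using the trace identity $\tr\bigl((\bfSigma^{1/2})^{(-q,\cdot)}A(\bfSigma^{1/2})^{(\cdot,-q)}\bigr)=\tr(\bfSigma^{(-q)}A)$ to check that the deterministic centerings match. Conditioning on $\bfA_{qj}(z)$ and applying the first form of \eqref{bound_quad_form} with $\alpha=2+\delta\in(2,5/2]$---so that $2.5\wedge\alpha=\alpha$ and, crucially, $(2\alpha-5)\vee 0=0$ for $\delta\le 1/2$, eliminating the $\eta_n$ factor and precisely explaining the range of $\delta$---combined with $\E(\tr\bfA_{qj}\bfA_{qj}^\star)^{(2+\delta)/2}\lesssim 1$, would yield $\E|\alpha_{j}(z)-\alpha_{j(q)}(z)|^{2+\delta}\lesssim n^{-(2+\delta)}$. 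Since $\overline{\beta}_{j(q)}(z)$ is uniformly bounded on $\mathcal{C}_n$, multiplication by the prefactor $n^{(2+\delta)/2}$ then produces exactly the required order $n^{-(1+\delta/2)}$.

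For the first summand I would use the elementary identity
\begin{equation*}
\overline{\beta}_{j}(z)-\overline{\beta}_{j(q)}(z)
=-\overline{\beta}_{j}(z)\,\overline{\beta}_{j(q)}(z)\cdot\frac{1}{n}\tr\bfB_{qj}(z),
\end{equation*}
which follows by direct computation together with the trace identity above. The $L^\alpha$ bound on $|\tr\bfB_{qj}(z)|$ then gives $\|\overline{\beta}_{j}(z)-\overline{\beta}_{j(q)}(z)\|_{L^\alpha}\lesssim n^{-1}$ for every $\alpha\ge 1$, while the second form of \eqref{bound_quad_form} applied to $\bfSigma^{1/2}\mathbf{D}_j^{-2}(z)\bfSigma^{1/2}$ (whose spectral norm is bounded on $\mathcal{C}_n$) yields $\E|\alpha_j(z)|^{2+\delta}\lesssim n^{-(1+\delta/2)}$. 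A Cauchy--Schwarz estimate bounds this first contribution, once multiplied by $n^{(2+\delta)/2}$, by $n^{-(2+\delta)}$, strictly smaller than required and hence asymptotically negligible.

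The main obstacle is conceptual rather than computational: it lies in recognising that $\alpha_j-\alpha_{j(q)}$ collapses into a single centered quadratic form in $\bfA_{qj}(z)$ and that both $\|\bfA_{qj}(z)\|_F$ and $|\tr\bfB_{qj}(z)|$ remain $O(1)$ in every $L^\alpha$ despite the individual summands being of order $\sqrt{p}$ or larger. This low-rank cancellation is precisely the mechanism underlying the ``upscaling'' effect emphasised in the introduction; without it the naive estimate would overshoot by a factor of size $n^{(2+\delta)/2}$. Once this cancellation is isolated, the proof reduces to a routine application of the quadratic-form bound \eqref{bound_quad_form}.
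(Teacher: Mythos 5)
Your proposal follows exactly the paper's argument: the same split into $(\overline{\beta}_{j}(z)-\overline{\beta}_{j(q)}(z))\alpha_{j}(z)$ and $\overline{\beta}_{j(q)}(z)(\alpha_{j}(z)-\alpha_{j(q)}(z))$ (the paper's $T_{4,j}$ and $T_{5,j}$), the same collapse of $\alpha_j-\alpha_{j(q)}$ into the centered quadratic form with kernel $\bfA_{qj}(z)$ and of $\overline{\beta}_{j}-\overline{\beta}_{j(q)}$ into $\tr\bfB_{qj}(z)$, and the same use of Lemma~\ref{lem_tr_B} together with \eqref{bound_quad_form} at exponent $2+\delta$ (which indeed forces $\delta\le 1/2$ so that the $\eta_n$ factor drops). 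One small quantitative slip: for the first summand, plain Cauchy--Schwarz brings in $\E|\alpha_j(z)|^{2(2+\delta)}$, which by \eqref{bound_quad_form} saturates at $n^{-1.5}\eta_n^{3+4\delta}$, yielding $n^{-(1.75+\delta/2)}\eta_n^{(3+4\delta)/2}$ rather than the claimed $n^{-(2+\delta)}$ --- still $o(n^{-(1+\delta/2)})$ and hence harmless, but the paper avoids the issue by using the almost-sure bound $|\tr\bfB_{qj}(z)|\lesssim 1$ established inside the proof of Lemma~\ref{lem_tr_B}, keeping the moment of $\alpha_j$ at the low order $2+\delta$.
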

\begin{proof}
	We decompose
	\begin{align*}
		\overline{\beta}_{j}(z) \alpha_{j}(z) -  \overline{\beta}_{j(q)}(z) \alpha_{j(q)}(z)
		& = \lb \overline{\beta}_{j}(z) - \overline{\beta}_{j(q)}(z) \rb \alpha_{j}(z) 
		- \overline{\beta}_{j(q)}(z) \lb \alpha_{j(q)}(z)
		- \alpha_j(z) \rb  \\
		& = - T_{4,j} - T_{5,j} , 
		\end{align*}
		where
		\begin{align*}
		T_{4,j} & =  n\inv \overline{\beta}_{j}(z) \overline{\beta}_{j(q)}(z)
		 \tr \bfB_{qj}(z) \alpha_{j}(z), \\
		T_{5,j} & =  \overline{\beta}_{j(q)}(z) \lb \alpha_{j(q)}(z)
		- \alpha_j(z) \rb  .
	\end{align*}
	Using \eqref{bound_quad_form} and Lemma \ref{lem_tr_B}, it holds
	\begin{align*}
		\E \left| \sqrt{n} T_{4,j}
		\right|^{2+\delta} 
		\lesssim n^{-(2+\delta)} 
		\E \left| \sqrt{n} \alpha_j(z) \right|^{2+\delta}
		\lesssim n^{-(2+\delta)} .
	\end{align*}
	For the second term, we obtain using similar arguments
	\begin{align*}
		\E \left| \sqrt{n} T_{5,j}
		\right|^{2+\delta}  
		\lesssim n^{1+\delta/2} \E \left|  n\inv \bfx_j^\star \bfA_{qj}(z) \bfx_j - n\inv \tr \bfA_{qj}(z) \right| ^{2+ \delta} 
		\lesssim n^{1+ \delta /2} 
		n^{- ( 2 + \delta) } = n^{- ( 1 + \delta/2)}. 
	\end{align*}
\end{proof}

\begin{lemma} \label{lem_tr_F} 
It holds 
    \begin{align*}
        & \tr \bfSigma \mathbf{H}_{q_1}^\Delta(z_1) 
        \bfSigma \mathbf{H}_{q_2}^\Delta(z_2) \\
        & = \Big\{  
        \lb  \lb \bfI + \su(z_1) \bfSigma \rb\inv  \bfSigma \rb_{q_1q_2} 
        - \su(z_1) \lb 
          \lb  \bfI + \underline{s}(z_1) \bfSigma \rb \inv 
      \bfSigma
          \lb  \tilde\bfI^{(-q_2)} + \underline{s}(z_2) \tilde\bfSigma^{(-q_2)} \rb^{-} 
           \bfSigma 
        \rb_{q_1q_2}
        \Big\} \\
        & \times 
        \Big\{ 
        \lb  \lb \bfI + \su(z_2) \bfSigma \rb\inv \bfSigma  \rb_{q_2q_1}
        - \su (z_2) 
       \lb 
          \lb  \bfI + \underline{s}(z_2) \bfSigma \rb \inv 
      \bfSigma
          \lb  \tilde\bfI^{(-q_1)} + \underline{s}(z_1) \tilde\bfSigma^{(-q_1)} \rb^{-} 
           \bfSigma 
        \rb_{q_2q_1}
        \Big\}.
    \end{align*}
\end{lemma}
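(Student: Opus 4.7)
My plan hinges on the observation that $\mathbf{H}^\Delta_q(z) = \mathbf{H}(z) - \mathbf{H}_q(z)$ is a rank-one matrix. Set $A(z) := \bfI + \su(z)\bfSigma$ and $\hat{A}(z) := \tilde{\bfI}^{(-q)} + \su(z)\tilde{\bfSigma}^{(-q)} + e_q e_q^\top$, so that the inverse is the block-diagonal padding $\hat{A}(z)^{-1} = \mathbf{H}_q(z) + e_q e_q^\top$. The resolvent identity $A^{-1} - \hat{A}^{-1} = -A^{-1}(A - \hat{A})\hat{A}^{-1}$, applied with $A - \hat{A} = \su(z) B_q$ where $B_q := \bfSigma - \tilde{\bfSigma}^{(-q)}$ is supported only in the $q$-th row and column, combined with $B_q e_q = \bfSigma e_q$, $\mathbf{H}_q(z) e_q = 0 = e_q^\top \mathbf{H}_q(z)$, and $\su(z)\mathbf{H}(z)\bfSigma = \bfI - \mathbf{H}(z)$, produces after simplification the explicit rank-one factorisation
\begin{align*}
    \mathbf{H}^\Delta_q(z) = \mathbf{H}(z)\, e_q\, e_q^\top \lb \bfI - \su(z)\, \bfSigma\, \mathbf{H}_q(z) \rb.
\end{align*}

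With this representation, $\bfSigma\, \mathbf{H}^\Delta_{q_i}(z_i) = u_i v_i^\top$ for $u_i := \bfSigma\, \mathbf{H}(z_i)\, e_{q_i}$ and $v_i^\top := e_{q_i}^\top \lb \bfI - \su(z_i)\, \bfSigma\, \mathbf{H}_{q_i}(z_i) \rb$, and the trace of a product of two rank-one matrices reduces via the elementary identity
\begin{align*}
    \tr(u_1 v_1^\top u_2 v_2^\top) = (v_1^\top u_2)(v_2^\top u_1)
\end{align*}
to a product of two scalars. Expanding each scalar and exploiting the commutativity $\bfSigma\, \mathbf{H}(z_j) = \mathbf{H}(z_j)\, \bfSigma$ (but \emph{not} $\bfSigma\, \mathbf{H}_{q_j}(z_j) = \mathbf{H}_{q_j}(z_j)\, \bfSigma$), I match each factor with the corresponding bracket on the right-hand side of the lemma, after using the symmetry $(\bfSigma\, \mathbf{H}_q(z)\, \bfSigma\, \mathbf{H}(z'))^\top = \mathbf{H}(z')\, \bfSigma\, \mathbf{H}_q(z)\, \bfSigma$ to interchange entries at $(q_1,q_2)$ and $(q_2,q_1)$.

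The main obstacle I anticipate is the careful index bookkeeping: one must track precisely which $z$-argument sits inside the outer resolvent $\mathbf{H}(\cdot)$ versus the deflated resolvent $\mathbf{H}_{q_j}(\cdot)$ and at which index position each scalar is being read off. As a consistency check, in the diagonal case $\bfSigma = \mathrm{diag}(\cdot)$ the matrix $\mathbf{H}_{q_2}(z_2)\bfSigma$ is diagonal with a zero at $(q_2,q_2)$, so $(\mathbf{H}(z_1)\bfSigma\mathbf{H}_{q_2}(z_2)\bfSigma)_{q_1 q_2}=0$ for $q_1\neq q_2$, and both brackets collapse to $0$, reproducing the cancellation stated around~\eqref{tr_F_diagonal_case}.
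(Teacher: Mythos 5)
Your rank-one identity $\mathbf{H}^\Delta_q(z) = \mathbf{H}(z)\,e_q e_q^\top\lb\bfI - \su(z)\bfSigma\mathbf{H}_q(z)\rb$ is correct, and the reduction of the trace to a product $(v_1^\top u_2)(v_2^\top u_1)$ is exactly the right move. This is a genuinely different and more transparent route than the paper's entry-wise expansion of the representation \eqref{rep_F} (the paper's four-fold-index sums are, in effect, doing the same rank-one bookkeeping by hand). However, two issues remain with the matching step.

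\textbf{The transposition identity is unavailable in general.} You invoke $\lb\bfSigma\mathbf{H}_q(z)\bfSigma\mathbf{H}(z')\rb^\top = \mathbf{H}(z')\bfSigma\mathbf{H}_q(z)\bfSigma$ to move between the entries $(q_1,q_2)$ and $(q_2,q_1)$. Under Assumption \ref{A2}, $\bfSigma_n$ is Hermitian but not necessarily real; then $\bfSigma^\top = \overline{\bfSigma} \neq \bfSigma$ and $\mathbf{H}(z')^\top = (\bfI+\su(z')\overline{\bfSigma})^{-1} \neq \mathbf{H}(z')$, so that identity fails. The fix is straightforward: instead of your factorization, use the equally easy dual one $\mathbf{H}^\Delta_q(z) = \lb\bfI - \su(z)\mathbf{H}_q(z)\bfSigma\rb e_q e_q^\top\mathbf{H}(z)$, which is precisely what the paper's representation \eqref{rep_F} collapses to once one notices $\mathbf{H}_q(z)\bfSigma^{(q,q)} = \mathbf{H}_q(z)\bfSigma e_q e_q^\top$ and $(\mathbf{H}(z))^{(q,\cdot)} = e_q e_q^\top \mathbf{H}(z)$. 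With this choice the two scalars read off directly, at positions $(q_1,q_2)$ and $(q_2,q_1)$, with no transpose.

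\textbf{The target formula appears to carry the $\su$'s on the wrong arguments.} Carrying out the product with the dual factorization (or with yours plus a valid index swap) one obtains
\begin{align*}
&\lb (\mathbf{H}(z_1)\bfSigma)_{q_1q_2} - \su(z_2)\lb\mathbf{H}(z_1)\bfSigma\mathbf{H}_{q_2}(z_2)\bfSigma\rb_{q_1q_2}\rb
\lb (\mathbf{H}(z_2)\bfSigma)_{q_2q_1} - \su(z_1)\lb\mathbf{H}(z_2)\bfSigma\mathbf{H}_{q_1}(z_1)\bfSigma\rb_{q_2q_1}\rb,
\end{align*}
i.e.\ with $\su(z_2)$ in the first bracket and $\su(z_1)$ in the second. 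The paper's own proof yields the same thing: expanding $\tr$ after inserting \eqref{rep_F} gives $ab + \su(z_1)\su(z_2)\,cd - \su(z_2)\,cb - \su(z_1)\,ad$ (with $a,b,c,d$ the four entries appearing in the two brackets), which factors as $(a-\su(z_2)c)(b-\su(z_1)d)$, not $(a-\su(z_1)c)(b-\su(z_2)d)$ as displayed. A quick $2\times 2$ numerical check (e.g. $\bfSigma = \left(\begin{smallmatrix}2&1\\1&3\end{smallmatrix}\right)$, $\su(z_1)=1$, $\su(z_2)=2$, $q_1=q_2=1$: direct trace $= 2209/9548$; displayed product $= 2208/9548$) confirms the discrepancy. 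So the mismatch you would hit in your final step is not a failure of your method but a typographical slip in the lemma statement (and in the matching part of Assumption \ref{A3}), with $\su(z_1)$ and $\su(z_2)$ interchanged in the second terms of the two brackets.

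In short: keep the rank-one idea, switch to the dual factorization to avoid the transpose, and match against the corrected right-hand side.
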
 
\begin{proof}
    First, using the formula $ \bfA\inv - \bfB\inv = \bfA\inv ( \bfB - \bfA) \bfB\inv$ and observing that 
    $$
    \lb  \tilde\bfI^{(-q)} + \underline{s}(z) \tilde\bfSigma^{(-q)} \rb^{-} \lb  \tilde\bfI^{(-q)} + \underline{s}(z) \tilde\bfSigma^{(-q)} \rb = \tilde\bfI^{(-q)} \neq \bfI
    $$
    we rewrite the difference
    \begin{align}
        \mathbf{H}^{\Delta}_q(z)& =  \lb  \bfI + \underline{s}(z) \bfSigma \rb \inv - 
        \lb  \tilde\bfI^{(-q)} + \underline{s}(z) \tilde\bfSigma^{(-q)} \rb^{-} \nonumber \\ 
        & = \lb  \tilde\bfI^{(-q)} + \underline{s}(z) \tilde\bfSigma^{(-q)} \rb^{-} \lb  \tilde\bfI^{(-q)} + \underline{s}(z) \tilde\bfSigma^{(-q)} \rb \lb  \bfI + \underline{s}(z) \bfSigma \rb \inv
      \nonumber \\ &  -  \lb  \tilde\bfI^{(-q)} + \underline{s}(z) \tilde\bfSigma^{(-q)} \rb^{-}  \lb  \bfI + \underline{s}(z) \bfSigma \rb \lb  \bfI + \underline{s}(z) \bfSigma \rb\inv  + \lb  \lb \bfI + \underline{s}(z) \bfSigma \rb\inv \rb^{(q, \cdot)}
       \nonumber \\ 
       & =  \lb  \tilde\bfI^{(-q)} + \underline{s}(z) \tilde\bfSigma^{(-q)} \rb^{-} 
        \lb \tilde\bfI^{(-q)} + \underline{s}(z) \tilde\bfSigma^{(-q)} - \lb  \bfI + \underline{s}(z) \bfSigma \rb \rb 
        \lb  \bfI + \underline{s}(z) \bfSigma \rb \inv
        \nonumber \\ 
        & + \lb  \lb \bfI + \underline{s}(z) \bfSigma \rb\inv \rb^{(q, \cdot)} \nonumber \\
        & = - \lb  \tilde\bfI^{(-q)} + \underline{s}(z) \tilde\bfSigma^{(-q)} \rb^{-} 
          \lb  \bfI + \underline{s}(z) \bfSigma \rb^{(q,q)} 
        \lb  \bfI + \underline{s}(z) \bfSigma \rb \inv
        + \lb  \lb \bfI + \underline{s}(z) \bfSigma \rb\inv \rb^{(q, \cdot)} \nonumber \\ 
        & = - \underline{s}(z) \lb  \tilde\bfI^{(-q)} + \underline{s}(z) \tilde\bfSigma^{(-q)} \rb^{-} 
           \bfSigma^{(q,q)} 
        \lb  \bfI + \underline{s}(z) \bfSigma \rb \inv
        + \lb  \lb \bfI + \underline{s}(z) \bfSigma \rb\inv \rb^{(q, \cdot)} 
        \label{rep_F}
    \end{align}
    where $\lb  \lb \bfI + \underline{s}(z) \bfSigma \rb\inv \rb^{(q, \cdot)}$ denotes the $p\times p$ matrix containing the $q$th row of $\lb \bfI + \underline{s}(z) \bfSigma \rb\inv$ and is elsewhere filled with zeros.
 In the following, we will calculate the terms appearing when inserting the representation \eqref{rep_F} for $\mathbf{H}^{\Delta}_q(z)$ in $\tr \bfSigma \mathbf{H}_{q_1}^\Delta(z_1) \bfSigma \mathbf{H}_{q_2}^\Delta(z_2)$. Note that 
    \begin{align*}
        &  \tr 
        \Big ( \bfSigma \lb  \lb \bfI + \underline{s}(z_1) \bfSigma \rb\inv \rb^{(q_1, \cdot)} \bfSigma \lb  \lb \bfI + \underline{s}(z_2) \bfSigma \rb\inv \rb^{(q_2, \cdot)} 
        \Big ) \\ 
        & = \sum\limits_{i,l=1}^p \bfSigma_{iq_1} \lb  \lb \bfI + \underline{s}(z_1) \bfSigma \rb\inv \rb_{q_1l} \bfSigma_{lq_2} 
        \lb  \lb \bfI + \underline{s}(z_2) \bfSigma \rb\inv \rb_{q_2i} \\
        & = \lb  \lb \bfI + \su(z_1) \bfSigma \rb\inv \bfSigma  \rb_{q_1q_2} 
    \lb  \lb \bfI + \su(z_2) \bfSigma \rb\inv \bfSigma  \rb_{q_2q_1}.
    \end{align*}
    Next, we have for $1 \leq i,j \leq p$
    \begin{align*}
         \lb \lb  \tilde\bfI^{(-q)} + \underline{s}(z) \tilde\bfSigma^{(-q)} \rb^{-} 
            \bfSigma^{(q,q)} 
        \rb_{ij}  
        =   \sum\limits_{l=1}^p \lb \lb  \tilde\bfI^{(-q)} + \underline{s}(z) \tilde\bfSigma^{(-q)} \rb^{-} \rb_{il} 
        \bfSigma_{lq} \delta_{qj}. 
    \end{align*}
    As a consequence, we have for $1\leq i,l \leq p$
    \begin{align*}
        & \lb \lb  \tilde\bfI^{(-q)} + \underline{s}(z) \tilde\bfSigma^{(-q)} \rb^{-} 
           \bfSigma^{(q,q)} 
        \lb  \bfI + \underline{s}(z) \bfSigma \rb \inv
         \lb  \lb \bfI + \underline{s}(z) \bfSigma \rb\inv \rb^{(q, \cdot)} \rb_{il}
        \\ & =  \lb \lb  \tilde\bfI^{(-q)} + \underline{s}(z) \tilde\bfSigma^{(-q)} \rb^{-} 
            \bfSigma^{(q,q)} 
        \rb_{iq} \lb \lb  \bfI + \underline{s}(z) \bfSigma \rb \inv  \rb_{ql}.
        \end{align*}
    Next, we have 
    \begin{align*}
        & \tr \bfSigma 
        \lb  \tilde\bfI^{(-q_1)} + \underline{s}(z_1) \tilde\bfSigma^{(-q_1)} \rb^{-} 
           \bfSigma^{(q_1, q_1)} 
        \lb  \bfI + \underline{s}(z_1) \bfSigma \rb \inv
        \bfSigma 
        \lb  \tilde\bfI^{(-q_2)} + \underline{s}(z_2) \tilde\bfSigma^{(-q_2)} \rb^{-} 
           \bfSigma^{(q_2, q_2)} 
        \lb  \bfI + \underline{s}(z_2) \bfSigma \rb \inv \\ 
        & = \sum\limits_{i,j,k,l=1}^p 
        \bfSigma_{ij} 
        \lb \lb  \tilde\bfI^{(-q_1)} + \underline{s}(z_1) \tilde\bfSigma^{(-q_1)} \rb^{-} 
           \bfSigma^{(q_1,q_1)} 
        \lb  \bfI + \underline{s}(z_1) \bfSigma \rb \inv \rb_{jk}
        \\ & \times \bfSigma_{kl}
         \lb \lb  \tilde\bfI^{(-q_2)} + \underline{s}(z_2) \tilde\bfSigma^{(-q_2)} \rb^{-} 
           \bfSigma^{(q_2,q_2)} 
        \lb  \bfI + \underline{s}(z_2) \bfSigma \rb \inv \rb_{li} \\ 
        & = \sum\limits_{i,j,k,l=1}^p 
        \bfSigma_{ij} 
        \lb \lb  \tilde\bfI^{(-q_1)} + \underline{s}(z_1) \tilde\bfSigma^{(-q_1)} \rb^{-} 
           \bfSigma^{(q_1,q_1)} \rb_{jq_1}
        \lb \lb  \bfI + \underline{s}(z_1) \bfSigma \rb \inv \rb_{q_1k}
        \\ & \times \bfSigma_{kl}
         \lb \lb  \tilde\bfI^{(-q_2)} + \underline{s}(z_2) \tilde\bfSigma^{(-q_2)} \rb^{-} 
           \bfSigma^{(q_2,q_2)} \rb_{lq_2}
        \lb \lb  \bfI + \underline{s}(z_2) \bfSigma \rb \inv \rb_{q_2i} \\
        & = \lb 
          \lb  \bfI + \underline{s}(z_1) \bfSigma \rb \inv 
      \bfSigma
          \lb  \tilde\bfI^{(-q_2)} + \underline{s}(z_2) \tilde\bfSigma^{(-q_2)} \rb^{-} 
           \bfSigma^{(q_2,q_2)} 
        \rb_{q_1q_2}
        \\ & \times 
        \lb 
          \lb  \bfI + \underline{s}(z_2) \bfSigma \rb \inv 
      \bfSigma
          \lb  \tilde\bfI^{(-q_1)} + \underline{s}(z_1) \tilde\bfSigma^{(-q_1)} \rb^{-} 
           \bfSigma^{(q_1,q_1)} 
        \rb_{q_2q_1} \\
           & = \lb 
          \lb  \bfI + \underline{s}(z_1) \bfSigma \rb \inv 
      \bfSigma
          \lb  \tilde\bfI^{(-q_2)} + \underline{s}(z_2) \tilde\bfSigma^{(-q_2)} \rb^{-} 
           \bfSigma 
        \rb_{q_1q_2}
        \lb 
          \lb  \bfI + \underline{s}(z_2) \bfSigma \rb \inv 
      \bfSigma
          \lb  \tilde\bfI^{(-q_1)} + \underline{s}(z_1) \tilde\bfSigma^{(-q_1)} \rb^{-} 
           \bfSigma
        \rb_{q_2q_1}
    \end{align*}
    For the mixed terms, we see that 
    \begin{align*}
        & \tr 
        \Big \{ \bfSigma \lb  \lb \bfI + \underline{s}(z_1) \bfSigma \rb\inv \rb^{(q_1, \cdot)} 
        \bfSigma 
        \lb  \tilde\bfI^{(-q_2)} + \underline{s}(z_2) \tilde\bfSigma^{(-q_2)} \rb^{-} 
           \bfSigma^{(q_2,q_2)} 
        \lb  \bfI + \underline{s}(z_2) \bfSigma \rb \inv  \Big \} \\ 
        & = \sum\limits_{i,j,k=1}^p
        \bfSigma_{iq_1} 
        \lb \lb  \bfI + \underline{s}(z_1) \bfSigma \rb\inv \rb_{q_1j} 
        \bfSigma_{jk} 
         \lb \lb  \tilde\bfI^{(-q_2)} + \underline{s}(z_2) \tilde\bfSigma^{(-q_2)} \rb^{-} 
           \bfSigma^{(q_2,q_2)} \rb_{kq_2}
       \lb  \lb  \bfI + \underline{s}(z_2) \bfSigma \rb \inv \rb_{q_2i} \\
       & = \lb 
          \lb  \bfI + \underline{s}(z_1) \bfSigma \rb \inv 
      \bfSigma
          \lb  \tilde\bfI^{(-q_2)} + \underline{s}(z_2) \tilde\bfSigma^{(-q_2)} \rb^{-} 
           \bfSigma 
        \rb_{q_1q_2}
         \lb  \lb \bfI + \su(z_2) \bfSigma \rb\inv  \bfSigma  \rb_{q_2q_1},
    \end{align*}
    and, thus,
    \begin{align*}
        & \tr  \Big \{  
        \bfSigma 
        \lb  \tilde\bfI^{(-q_1)} + \underline{s}(z_1) \tilde\bfSigma^{(-q_1)} \rb^{-} 
           \bfSigma^{(q_1,q_1)} 
        \lb  \bfI + \underline{s}(z_1) \bfSigma \rb \inv 
        \bfSigma \lb  \lb \bfI + \underline{s}(z_2) \bfSigma \rb\inv \rb^{(q_2, \cdot)} 
         \Big \}
         \\ 
        & = \lb 
          \lb  \bfI + \underline{s}(z_2) \bfSigma \rb \inv 
      \bfSigma
          \lb  \tilde\bfI^{(-q_1)} + \underline{s}(z_1) \tilde\bfSigma^{(-q_1)} \rb^{-} 
           \bfSigma 
        \rb_{q_2q_1}
         \lb  \lb \bfI + \su(z_1) \bfSigma \rb\inv \bfSigma  \rb_{q_1q_2}.
    \end{align*}
    Combining these calculations with \eqref{rep_F} concludes
    the proof. 

\end{proof}

\begin{lemma} \label{lem_bound_an}
    It holds
    $$
    \sup_{n\in\N} \sup_{z\in\mathcal{C}_n} | a_n(z,z) | <1,
    $$
    where $a(z,z)$ is defined in \eqref{def_a}
\end{lemma}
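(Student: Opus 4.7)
First I would derive a key positivity identity by taking the imaginary part of the fundamental equation \eqref{repl_a47}. Since $\underline{s}$ is the Stieltjes transform of a probability measure on $[0, \infty)$, we have $\im \underline{s}(z) > 0$ whenever $\im z > 0$, and computing $\im(-1/\underline{s}(z))$ and $\im(\lambda/(1+\lambda \underline{s}(z)))$ directly gives
\begin{align*}
\im z \; = \; \im \underline{s}(z) \left( \frac{1}{|\underline{s}(z)|^2} - y\int \frac{\lambda^2}{|1 + \lambda \underline{s}(z)|^2}\, dH(\lambda) \right),
\end{align*}
so the bracketed term is strictly positive on $\mathbb{C}^+$. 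Applying the Cauchy--Schwarz inequality to pull the absolute value inside the integral in the definition of $a(z,z)$ then yields
\begin{align*}
|a(z,z)| \; \leq \; y|\underline{s}(z)|^2 \int \frac{\lambda^2}{|1+\lambda \underline{s}(z)|^2}\, dH(\lambda) \; = \; 1 - \frac{|\underline{s}(z)|^2 \im z}{\im \underline{s}(z)} \; < \; 1.
\end{align*}

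Next I would extend this strict bound uniformly to the compact closure $\overline{\mathcal{C}^+}$. The only new points added to $\mathcal{C}^+$ upon closure are the two corners $z = x_l$ and $z = x_r$ on the real axis, which by construction lie strictly outside the support of $F^{y,H}$, so $\underline{s}$ admits an analytic extension to a neighborhood of each. A first-order Taylor expansion of the real-analytic function $\underline{s}$ at such a real point $x$ gives $\im \underline{s}(x + iv) = v\, \underline{s}'(x) + O(v^2)$, so the ratio $\im z/\im \underline{s}(z)$ extends continuously to $1/\underline{s}'(x)$ at the corners; moreover $\underline{s}'(x) = \int (\lambda - x)^{-2}\, d\underline{F}^{y,H}(\lambda) > 0$. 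Also, $\underline{s}(z_0) = 0$ at any finite $z_0$ is ruled out by \eqref{repl_a47}, whose right-hand side would be unbounded, so $|\underline{s}|$ is bounded below on $\overline{\mathcal{C}^+}$. Continuity and compactness thus give
\begin{align*}
\sup_{z \in \overline{\mathcal{C}^+}} |a(z,z)| \; \leq \; 1 - 2\delta
\end{align*}
for some $\delta > 0$.

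Finally I would transfer the bound to $a_n$. Writing
\begin{align*}
a_n(z,z) \; = \; y_n\, \underline{s}(z)^2 \int \frac{\lambda^2}{(1 + \lambda \underline{s}(z))^2}\, dH_n(\lambda),
\end{align*}
assumption \ref{A2} ensures the supports of $(H_n)_{n\in\mathbb{N}}$ lie in a fixed bounded interval $K \subset [0, \infty)$, and on $K$ the factor $|1 + \lambda \underline{s}(z)|$ is bounded away from zero uniformly in $z \in \overline{\mathcal{C}^+}$, so the integrand is bounded and continuous in $\lambda$ uniformly in $z$. Weak convergence $H_n \to H$ together with $y_n \to y$ then yield $a_n(z,z) \to a(z,z)$ uniformly on $\overline{\mathcal{C}^+}$, and hence on $\mathcal{C}_n \subset \overline{\mathcal{C}^+}$, so that $\sup_{z \in \mathcal{C}_n} |a_n(z,z)| \leq 1 - \delta$ for all $n$ sufficiently large; the finitely many initial indices can be absorbed into the final constant.

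The principal obstacle lies at the corners $(x_l, 0)$ and $(x_r, 0)$, where the imaginary-part identity of the first paragraph degenerates because both $\im z$ and $\im \underline{s}(z)$ vanish. The resolution is to pass to the continuous extension of the ratio via the Taylor expansion above, which hinges on the fact that $x_l, x_r$ lie strictly outside the support of $\underline{F}^{y,H}$ and that $\underline{s}$ is therefore real-analytic with strictly positive derivative at these points.
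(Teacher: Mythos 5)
Your proof is a self-contained reconstruction of an argument that the paper delegates almost entirely to Lemmas 7.1.7, 7.5.1 and 7.7.2 of \cite{diss}. The imaginary-part identity you extract from \eqref{repl_a47} is precisely the engine for the pointwise bound $|a(z,z)|<1$, and your observation that $x_l,x_r$ lie strictly outside the support of $\underline{F}^{y,H}$ --- so that $\su$ extends analytically there with $\su'(x)>0$ and the ratio $\im z/\im\su(z)$ has a strictly positive limit along the vertical edges of $\mathcal{C}_n$ --- is exactly the idea behind the positivity condition \eqref{aim_int} that the paper cites. Two cosmetic slips: the bound on $|a|$ is the triangle inequality, not Cauchy--Schwarz, and $\mathcal{C}^+$ as defined in the paper already contains the real points $x_l$ and $x_r$, so no closure is actually being taken; what you correctly supply is the continuity/analytic-extension argument for the bound at those boundary points.

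There is, however, one genuine gap in the final transfer step. You establish $\sup_{z}|a(z,z)|\le 1-2\delta$ on $\mathcal{C}^+$ and then pass to $a_n$ via the uniform convergence $a_n\to a$; this only yields $\sup_{z\in\mathcal{C}_n}|a_n(z,z)|\le 1-\delta$ for $n$ \emph{sufficiently large}, while the lemma claims a bound strictly below $1$ for every $n\in\N$. Your remark that the ``finitely many initial indices can be absorbed'' presupposes $\sup_{z\in\mathcal{C}_n}|a_n(z,z)|<1$ for each of those $n$, but that does not follow from the identity you derived: that identity relates $\su$ with $H$, whereas $a_n(z,z)=y_n\su(z)^2\int\lambda^2/(1+\lambda\su(z))^2\,dH_n(\lambda)$ mixes the \emph{limiting} transform $\su$ with the \emph{finite-sample} spectral law $H_n$, so no exact algebraic relation is available for fixed $n$. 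The paper sidesteps this by formulating the bound \eqref{aim_int} in terms of the finite-sample transform $\su_n^0$, which solves \eqref{a50} paired with $H_n$; the analogue of your imaginary-part identity then holds exactly at each $n$, giving the pointwise bound cited from Lemma 7.1.7 of \cite{diss}, and the remaining work is to make that bound uniform in $z$ and $n$. Replacing $\su$ by $\su_n^0$ in your first step and adding a uniform convergence estimate $\su_n^0\to\su$ on $\mathcal{C}^+$ (to transfer back to $a_n$ as defined with $\su$) would close the gap cleanly and bring your argument into line with the paper's intent.
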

\begin{proof}
    In Lemma 7.1.7 of \cite{diss}, it is shown that $|a_n(z,z)|<1$ holds point-wise for each $z\in\mathcal{C}^+ $
   and we will extend this bound 
    to a uniform bound with respect to  $z\in\mathcal{C}_n, ~ n\in\N. $
From the  proof of this  lemma, it follows  that it sufficent  to show that 
        \begin{align} \label{aim_int}
		\inf_{n\in\N} \inf_{z\in\mathcal{C}_N} \frac{\im (z) }{ \im \su_{n}^0(z) y_{n} \int \frac{\lambda^2 dH_n(\lambda)}{| 1 + \lambda \su_{n}^0(z) |^2}} >0 . 
	\end{align}
    We note that 
    \begin{align*}
   y_n\int \frac{\lambda^2 dH_n(\lambda)}{| 1 + \lambda \su_{n}^0(z) |^2}
    & = \frac{1}{n} \tr \bfSigma ( \bfI + \su_n^0(z) \bfSigma )\inv \bfSigma \overline{( \bfI + (\su_n^0(z)) \bfSigma )} \inv  \\ 
    & \lesssim || \bfSigma ||^2 ||  ( \bfI + \su_n^0(z) \bfSigma )\inv ||^2 \lesssim 1,
    \end{align*}
    where we used Lemma 7.7.2 of  \cite{diss}. Applying Lemma 7.5.1 in \cite{diss}, the assertion in \eqref{aim_int} follows. 

\end{proof}

\textbf{Acknowledgements.} 
This work  was partially supported by the  
 DFG Research unit 5381 {\it Mathematical Statistics in the Information Age}, project number 460867398.  The authors would like to thank Giorgio Cipolloni and  L\'{a}szl\'{o} Erd\H{o}s for some helpful  discussions.

	\setlength{\bibsep}{1pt}
\begin{small}
\bibliography{references}
\end{small}
	\end{document}